\pgfplotsset{compat=newest}
\colorlet{color1}{blue}
\colorlet{color2}{red!50!black}
\definecolor{ivory}{RGB}{218,215,203}
\definecolor{cuhkp}{RGB}{98,56,105} 	
\definecolor{cuhkpl}{RGB}{152,24,147} 	
\definecolor{cuhkb}{RGB}{219,160,1} 	
\definecolor{cuhkbd}{RGB}{178,129,0} 	
\definecolor{cuhkr}{RGB}{88,35,155}  	
\definecolor{ruri}{RGB}{0,92,175}  		
\definecolor{enji}{RGB}{159,53,58}  		
\definecolor{mygray}{RGB}{250,250,250}
\definecolor{myred}{RGB}{205,27,41}
\definecolor{myblue}{RGB}{0,38,112}
\definecolor{lavender}{rgb}{0.9, 0.9, 0.98}
\definecolor{ivory}{RGB}{218,215,203}
\definecolor{MyBlue}{RGB}{123,144,210} 	
\definecolor{MyRed}{RGB}{224,60,138} 	
\definecolor{DeepGreen}{RGB}{92,172,129} 	
\definecolor{MyOrange}{RGB}{226,148,59} 	
\definecolor{DeepBlue}{RGB}{0,92,175}  
\definecolor{DeepRed}{RGB}{167,67,67}  
\definecolor{LightBlue}{RGB}{229,232,247}   
\newcommand{\xmark}{\color{myred}\ding{55}}
\DeclareMathOperator*{\argmin}{argmin}
\newcommand{\rmn}[1]{\textup{\textrm{#1}}}
\newcommand{\cO}{\mathcal{O}}
\newcommand{\R}{\mathbb{R}}
\newcommand{\sA}{{\sf A}}
\newcommand{\sB}{{\sf B}}
\newcommand{\sC}{{\sf C}}
\newcommand{\sD}{{\sf D}}
\newcommand{\sE}{{\sf E}}
\newcommand{\sL}{{\sf L}}
\newcommand{\sQ}{{\sf Q}}
\newcommand{\sP}{{\sf P}}
\newcommand{\flb}{f_{\mathrm{lb}}}
\newcommand{\vplb}{\vp_{\mathrm{lb}}}
\newcommand{\psilb}{\psi_{\mathrm{lb}}}
\newcommand{\CL}{\mathsf{C}}
\newcommand{\N}{\mathbb{N}}
\newcommand{\Rn}{\mathbb{R}^d}
\newcommand{\Rex}{(-\infty,\infty]}
\newcommand{\Exp}{\mathbb{E}}
\newcommand{\vp}{\varphi}
\newcommand{\dist}{\mathrm{dist}}
\newcommand{\crit}[1]{\mathrm{crit}(#1)}
\newcommand{\cT}{\mathcal{T}}
\newcommand{\half}{\frac{1}{2}}
\newcommand{\iprod}[2]{\langle #1, #2 \rangle}
\newcommand{\mer}{H_\tau}
\newcommand{\SGD}{\mathsf{SGD}}
\newcommand{\RR}{\mathsf{RR}}
\newcommand{\PSGD}{\mathsf{PSGD}}
\newcommand{\PGD}{\mathsf{PGD}}
\newcommand{\NRR}{\mathsf{norm}\text{-}\mathsf{PRR}}
\newcommand{\EPRR}{\mathsf{e}\text{-}\mathsf{PRR}}
\newcommand{\be}{\begin{equation}}
\newcommand{\ee}{\end{equation}}
\newcommand{\addes}{\varrho_\vartheta}
\newcommand{\prox}[1]{\mathrm{prox}_{#1}}
\newcommand\proxl{\mathrm{prox}_{\lambda\vp}}
\newcommand\proxi[1]{\mathrm{prox}_{#1\vp}}
\newcommand\Fnor{F^{\lambda}_{\mathrm{nor}}}
\newcommand{\dom}[1]{\mathrm{dom\\}(#1)}
\definecolor{minrev}{RGB}{66,175,55}
\newcommand{\norm}[1]{\|{#1}\|}
\newcommand{\prt}[1]{\left({#1}\right)}
\newtheorem{assumption}[theorem]{Assumption}
\newcolumntype{C}[1]{>{\centering\arraybackslash}p{#1}}
\newcolumntype{M}[1]{>{\arraybackslash}p{#1}}
\begin{document}

\title{A New Random Reshuffling Method for Nonsmooth Nonconvex Finite-sum Optimization}

\author{\name Junwen Qiu \email jwqiu@nus.edu.sg \\
       \addr School of Data Science\\
       The Chinese University of Hong Kong, Shenzhen\\
       Guangdong, 518172, P.R. China\\[2mm]
       Industrial Systems Engineering \& Management\\
       National University of Singapore\\
       Singapore, 119077, Singapore 
       \AND
       \name Xiao Li \email lixiao@cuhk.edu.cn
	   \AND
       \name Andre Milzarek \email andremilzarek@cuhk.edu.cn \\
       \addr School of Data Science\\
       The Chinese University of Hong Kong, Shenzhen \\
       Guangdong, 518172, P.R. China
       }
\editor{Nicolas Le Roux}

\maketitle

\begin{abstract}%
Random reshuffling techniques are prevalent in large-scale applications, such as training neural networks. While the convergence and acceleration effects of random reshuffling-type methods are fairly well understood in the smooth setting, much less studies seem available in the nonsmooth case. In this work, we design a new normal map-based proximal random reshuffling ($\NRR$) method for nonsmooth nonconvex finite-sum problems. We show that $\NRR$ achieves the iteration complexity $\cO(n^{-1/3}T^{-2/3})$ where $n$ denotes the number of component functions $f(\cdot,i)$ and $T$ counts the total number of iterations. This improves the currently known complexity bounds for this class of problems by a factor of $n^{-1/3}$ in terms of the number of gradient evaluations.  Additionally, we prove that $\NRR$ converges linearly under the (global) Polyak-{\L}ojasiewicz condition and in the interpolation setting. We further complement these non-asymptotic results and provide an in-depth analysis of the asymptotic properties of $\NRR$. Specifically, under the (local) Kurdyka-{\L}ojasiewicz  inequality, the whole sequence of iterates generated by $\NRR$ is shown to converge to a single stationary point. Moreover, we derive last-iterate convergence rates that can match those in the smooth, strongly convex setting. Finally, numerical experiments are performed on nonconvex classification tasks to illustrate the efficiency of the proposed approach. 
\end{abstract}

\begin{keywords}
Proximal random reshuffling, normal map, complexity, asymptotic convergence, nonconvexity, nonsmoothness
\end{keywords}

\section{Introduction}
	In this work, we consider the composite optimization problem
	\begin{equation}
		\label{SO} \min_{w\in \Rn}~\psi(w) := f(w) + \vp(w),
	\end{equation}
	where $f:\Rn\to\R$ is differentiable but not necessarily convex and $\vp:\Rn\to\Rex$ is a weakly convex, lower semicontinuous (lsc.), and proper mapping. Composite models of the form \cref{SO} are ubiquitous in structured large-scale applications and stochastic optimization, including, e.g., machine learning \citep{bottou-2010,bottou2018optimization,lecun2015}, statistical learning and sparse regression \citep{tibshirani1996regression,hastibfri09,SST2011}, image and signal processing \citep{ComPes11,ChaPoc11}.
 
    We are interested in the case where the smooth part of the objective function has a finite-sum structure, i.e., $f$ can be represented as follows:
	\begin{equation}
		\label{finite-sum}
		f(w):=\frac{1}{n}\sum_{i=1}^{n}f(w,i).
	\end{equation}
In machine learning tasks, the number of component functions, $n$, is typically connected to the number of underlying data points and is often prohibitively large. In this scenario, computing the full gradient $\nabla f$ is highly expensive or even impossible. Nonetheless, many classical approaches for solving \cref{SO} require exact gradient information in each step, see \citep{lions1979splitting,mine1981prox,AttBolRedSou10,AttBolSva13}. In this paper, we adopt a stochastic approximation-based perspective and assume that the gradient of only one single function $f(\cdot,i)$ is available at each iteration. Furthermore, the evaluation of the proximity operator of $\vp$ is assumed to be tractable (as is the case in many applications, \citep{combettes2005signal}). Our main objective is to develop a novel proximal random reshuffling algorithm for problem \cref{SO} with convincing practical and theoretical properties.
 
\subsection{Related Works and Motivations}

\noindent\textit{Smooth problems.} When $\vp \equiv 0$, \cref{SO} reduces to a standard smooth optimization problem. 
In this case, the stochastic gradient descent method ($\SGD$) proposed in the seminal work by \citet{robbins1951stochastic} is the prototypical approach for solving \cref{SO}. The core step of $\SGD$ is given by:
  \begin{equation}
	\label{alg:SGD}
	w^{k+1}=w^k - \alpha_k \nabla f(w^k,i_k),\quad \text{where $i_k$ is chosen randomly from $[n]$.}  
	\tag{$\SGD$}
\end{equation} 
$\SGD$ has been studied extensively  during the past decades; we refer to \citep{chung1954stochastic,rakhlin2012making,nguyen2018sgd,gower2019sgd,gower2021sgd}. In the strongly convex setting, $\SGD$ was proven to converge to the optimal solution $w^*$ almost surely with the rate $\|w^k-w^*\|=\cO(1/\sqrt{k})$, \citep{chung1954stochastic}. Moreover, the lower bounds derived in \citep{nemirovskij1983,agarwal2009lower,nguyen2019lower} indicate that the rate $\cO(1/\sqrt{k})$ is tight for $\SGD$ up to a constant. In the nonconvex case, convergence of $\SGD$ can be expressed in terms of complexity bounds,  
\[
	\min_{k=1,\ldots,T} \; \Exp[\|\nabla f(w^k)\|^2] = \cO(T^{-1/2}), 	
\]
see, e.g., \citep{ghadimi2013stochastic}. Recently, empirical evidence in \citep{bottou2012,shamir2016,mishchenko2020random} suggests that variants of $\SGD$ with \emph{without-replacement sampling} can attain faster convergence. Incorporating such sampling scheme leads to the so-called random reshuffling method ($\RR$), which is shown below: 
 \begin{equation}
	\label{RR}
	\left[ 
	\begin{array}{l} 
		\text{Set } w_1^k  = w^k \text{ and generate a random permutation $\pi^k$ of $[n]$;}\\[1ex]
		\;\;\;\textbf{For}\;\;i=1,2,\ldots,n\;\;\textbf{do:}
		\quad w_{i+1}^k  =w_i^k-\alpha_k \nabla  f(w_i^k,\pi_i^k); \\[1ex]
		\text{Set } w^{k+1} = w_{n+1}^k.
	\end{array}
	\right.
	\tag{$\RR$}
\end{equation}
For problems with the finite-sum structure \cref{finite-sum}, $\RR$ exhibits superior theoretical guarantees compared to $\SGD$ \citep{gurbu2019,mishchenko2020random,nguyen2021unified,li2023convergence}. In \citep{gurbu2019}, the first theoretical guarantee for faster convergence of $\RR$ is provided. In particular, in the strongly convex case, the sequence of q-suffix averaged iterates is shown to converge to the unique optimal solution at a rate of $\cO(1/k)$ with high probability. Following this pioneering work, subsequent research has began to explore and understand the theoretical behavior of $\RR$, \citep{haochen2019random,nagaraj2019sgd,mishchenko2020random,nguyen2021unified}. In the nonconvex case and under a general variance bound, the authors in \citep{mishchenko2020random,nguyen2021unified} established the complexity bound 
\[
	{\min}_{k=1,\ldots,T} \; \Exp[\|\nabla f(w^k)\|^2] = \cO(n^{-1/3}T^{-2/3}),	
\]
for $\RR$, when a uniform without-replacement sampling scheme\footnote{Every $\nabla f(\cdot,i)$, $i\in[n]$ has equal probability to be selected without-replacement.} is applied. 
In addition, \cite{nguyen2021unified} show $\liminf_{k\to\infty} \|\nabla f(w^k)\| = 0$ for step sizes $\{\alpha_k\}_k$ satisfying $\sum_{k=0}^{\infty} \alpha_k = \infty$ and $\sum_{k=0}^\infty \alpha_k^3 < \infty$. In \citep{li2023convergence}, this was strengthened to full asymptotic convergence, $\nabla f(w^k) \to 0$, and  to last-iterate convergence of the form  $\|w^k-w^*\|=\cO(1/k)$, $k\to \infty$, under the Kurdyka-{\L}ojasiewicz (KL) inequality. Here, $w^*$ generally denotes a stationary point of the problem $\min_w f(w)$, i.e., $\nabla f(w^*) = 0$. \\[1mm]
\noindent\textit{Composite problems.} The proximal stochastic gradient method ($\PSGD$) is a standard stochastic approach for solving the composite problem \cref{SO}. The update of $\PSGD$ is given by:
 \begin{equation}
	\label{alg:PSGD}
	w^{k+1}=\proxi{\alpha_k}(w^k - \alpha_k \nabla f(w^k,i_k)),\quad \text{where $i_k$ is chosen randomly 
 from $[n]$.}
		\tag{$\PSGD$}
\end{equation}
In contrast to $\SGD$, the convergence behavior of $\PSGD$ is less understood, especially when $f$ is nonconvex. \cite{davis2019stochastic} present one of the first complexity results for $\PSGD$ in the nonconvex setting, i.e., it holds that 
\begin{equation} \label{eq:comp-proxsgd} {\min}_{k=1,\dots,T} \ \Exp[\|\mathcal G_\lambda(w^{k})\|^2] = \mathcal O(\textstyle{\sum}_{k=1}^{T}\alpha_k^2 \,/\, \textstyle{\sum}_{k=1}^{T}\alpha_k), \quad \lambda > 0, \end{equation}
where $\mathcal G_\lambda(w) := \lambda^{-1}(w - \prox{\lambda\vp}(w-\lambda\nabla f(w)))$ is a basic stationarity measure for \cref{SO}. Earlier studies of $\PSGD$ for nonconvex $f$ appeared in \citep{GhaLanZha16}, where convergence is shown if the variance vanishes as $k \to \infty$. Asymptotic convergence guarantees are discussed in \citep{MajMiaMou18,duchiruan2018,davis2020stochastic,LiMil22} under (almost) sure boundedness of the iterates $\{w^k\}_k$ or global Lipschitz continuity of $\vp$. The easier convex and strongly convex cases have been investigated, e.g., in \citep{GhaLanZha16,AtcForMou17,rosasco2020convergence,patrascu2021stochastic}. If $f$ is convex and $\psi$ is strongly convex, convergence in expectation to the unique solution $w^*$, $\Exp[\|w^k - w^*\|^2] = \mathcal O(1/k)$, can be ensured, \citep{rosasco2020convergence,patrascu2021stochastic}.

Naturally and in order to improve the performance of $\PSGD$, we may consider suitable combinations of without-replacement sampling schemes and $\PSGD$. An intuitive and straightforward extension of $\RR$ is to perform additional proximal steps ``$\prox{\alpha_k\vp}(\cdot)$'' after each inner iteration $i= 1,\dots,n $ of $\RR$. However, as highlighted in \cite[Example 1]{mishchenko2022proximal}, such a combination prevents the accurate approximation of the full gradient $\nabla f$ after one epoch. Based on this observation, \cite{mishchenko2022proximal} develop a different proximal-type $\RR$. Unlike $\PSGD$, the proposed method performs a proximal step only once after each epoch. Therefore, we will refer to this approach as epoch-wise proximal random reshuffling method ($\EPRR$). The main iterative step of $\EPRR$ is shown below:
\begin{equation}
\label{alg:PRR}
\left[ 
\begin{array}{l} 
	\text{Set } w_1^k  = w^k \text{ and generate a random permutation $\pi^k$ of $[n]$;}\\[1ex] 
	\;\;\;\textbf{For}\;\;i=1,2,\ldots,n\;\;\textbf{do:}
	\quad w_{i+1}^k  = w_i^k-\alpha_k \nabla  f(w_i^k,\pi_i^k); \\[1ex]
	\text{Set } w^{k+1} = \proxi{n\alpha_k}(w_{n+1}^k).
\end{array}
\right. 
\tag{$\EPRR$}
\end{equation}
In the nonconvex setting and under an additional bound connecting $\nabla f$ and the stationarity measure $\mathcal G_{\lambda}$, \citet{mishchenko2022proximal} derive the complexity bound\footnote{The explicit bound in \cite[Theorem 3]{mishchenko2022proximal} is ${\min}_{k=1,\ldots,T}~\Exp[\|\mathcal G_{n\gamma}(w^k)\|^2] = \cO((n\gamma T)^{-1} + n^2\gamma^2 + n \gamma^2)$ with step sizes $\alpha_k = \gamma \lesssim \frac{1}{\sL n}$. By replacing $\lambda = n\gamma$, we obtain ${\min}_{k=1,\ldots,T}~\Exp[\|\mathcal G_{\lambda}(w^k)\|^2] = \cO(T^{-2/3}+n^{-1}T^{-2/3})$ for the optimal choice $\lambda \sim T^{-1/3}$.} 
\begin{equation} \label{eq:complexity-eppr}
{\min}_{k=1,\ldots,T}~\Exp[\|\mathcal G_{\lambda}(w^k)\|^2] = \cO(T^{-2/3}+n^{-1}T^{-2/3}), \quad \lambda \sim T^{-1/3}, 
\end{equation}
for $\EPRR$, where $\lambda > 0$ is a step size parameter.  In a recent study of $\EPRR$, \citet{liu2024last} provide additional convergence rates for the objective function values $\{\psi(w^k)\}_k$ in the convex setting (i.e., each $f(\cdot,i)$ and $\vp$ are assumed to be convex).  
\begin{table}[t]
\centering
{\footnotesize

\begin{tabular}{|C{1.3cm}|C{3.47cm}C{2.6cm}C{3cm}|C{2.86cm}|}
\hline
\multirow{2}{*}{\textbf{Alg.}} & \multicolumn{3}{c|}{\textbf{Convergence: Nonconvex Setting}} & \multirow{2}{*}{\textbf{Reference}} \\ \cline{2-4}
& complexity\,${}^{\textcolor{red}{*}}$ & global conv. & local rate (KL) & \\ \hline
\vspace{5mm}\multirow{3}{*}{\parbox{1.3cm}{\centering$\RR$\\ {\color{blue} (smooth, $\vp\equiv 0$)}}} & 
\cellcolor{blue!3}{ \vspace{.5mm} $\frac{\sL \sqrt{n}}{\varepsilon^2}\max\{\sqrt{n},\frac{\sqrt{\sA}+\sB}{\varepsilon}\}$\,${}^{\textcolor{red}{\text{(a)}}}$} & \vspace{.5mm}
$-$ & \vspace{.5mm}
\cellcolor{blue!3}$-$ & \vspace{-1mm}
\cite{mishchenko2020random} \\[-1mm]
& \cellcolor{blue!3}$\frac{\sL \sqrt{n}}{\varepsilon^2}\cdot \frac{\sB\sqrt{\sA/n+1}}{\varepsilon}$\,${}^{\textcolor{red}{\text{(b)}}}$ & $\|\nabla f(w^k)\| \to 0$\,${}^{\textcolor{red}{\text{(c)}}}$ & \cellcolor{blue!3}$-$ & \cite{nguyen2021unified} \\[2.5mm]
& \cellcolor{blue!3}$-$ & $\|\nabla f(w^k)\| \to 0$ & \cellcolor{blue!3}$\|w^k-w^*\| = \mathcal O(\frac{1}{k})$ & \cite{li2023convergence} \\[.5mm] \hline \vspace{1cm}
\multirow{3}{*}{$\PSGD$} & \vspace{.5mm}
\cellcolor{blue!3}{$\frac{\sL}{\varepsilon^2}\max\{1,\frac{\sB^2}{\varepsilon^2}\}$\,${}^{\textcolor{red}{\text{(d)}}}$} & \vspace{.5mm}
{$-$} & 
\vspace{.5cm} \cellcolor{blue!3} & \vspace{-0.4mm}
\cite{davis2019stochastic} \\[-8mm] &\multicolumn{2}{l}{\dots\dots\dots\dots\dots\dots\dots\dots\dots\dots\dots\dots\dots\dots\dots}&&\dots\dots\dots\dots\dots\dots\\[2mm]
&\vspace{4mm} \cellcolor{blue!3}{$-$} &\vspace{4mm} $\|\mathcal G_\lambda(w^k)\| \to 0^{\textcolor{red}{\text{(e)}}}$ & \vspace{-2mm}\cellcolor{blue!3}{\xmark} &\vspace{-4mm} \cite{duchiruan2018,MajMiaMou18,davis2020stochastic,LiMil22} \\ \hline \vspace{2mm}
{$\EPRR$} &  \vspace{.5mm}
\cellcolor{blue!3}$\frac{\sL \sqrt{n}}{\varepsilon^2}\max\{\sqrt{n},\frac{\sB}{\varepsilon},\frac{\sqrt{n}\zeta}{\sqrt{\sL}\varepsilon}\}$\,${}^{\textcolor{red}{\text{(f)}}}$ &  \vspace{.5mm}
\xmark &  \vspace{.5mm}
\cellcolor{blue!3}\xmark &  \vspace{.1mm} 
\cite{mishchenko2022proximal} \\[-2mm] 
& \cellcolor{blue!3} & & \cellcolor{blue!3} & \\ \hline
\vspace{.5mm}{$\mathsf{norm}\text{-}$ $\mathsf{PRR}$} & \vspace{1.5mm}
\cellcolor{blue!3}$
   \frac{\sL \sqrt{n}}{\varepsilon^2}\max\{\sqrt{n},\frac{\sqrt{\sL}}{\varepsilon}\} 
$\,${}^{\textcolor{red}{\text{(g)}}}$ & \vspace{1mm}
$\dist(0,\partial \psi(w^k))$ $\to 0$ & \vspace{.5mm}
\cellcolor{blue!3}$\|w^k-w^*\| = \mathcal O(\frac{1}{k})$ & \vspace{3mm}
this work \\[-2.5mm]
& \cellcolor{blue!3} & & \cellcolor{blue!3}$|\psi(w^k)-\psi^*| = \mathcal O(\frac{1}{k^2})$ \vspace{-1.5mm}& \\
\hline
\end{tabular}
}
\caption{Comparison of convergence guarantees for $\RR$ and proximal-type methods.}
\begin{tablenotes}
\item  {\footnotesize \textcolor{red}{\text{*}} This column shows the number of individual gradient evaluations $K = T$ ($\PSGD$) and $K = nT$ ($\RR$, $\EPRR$, $\NRR$) required to reach an $\varepsilon$-accurate solution satisfying $\min_{k=1,\dots, T} \Exp[\|\mathcal G_\lambda(w^k)\|] \leq \varepsilon$ where $\lambda$ is a (step size) parameter. Note that both $\PSGD$ and $\NRR$ execute a proximal step after each (stochastic) gradient step, whereas $\EPRR$ only performs a proximal step after each epoch. Hence, $\EPRR$ generally has a better complexity in terms of proximity operator evaluations.}
\item {\footnotesize \textcolor{red}{\text{(a)}} Based on the variance condition $\frac{1}{n} \sum_{i=1}^n \|\nabla f(w,i)-\nabla f(w)\|^2 \leq 2\sA[f(w)-\flb]+\sB^2$.}
\item {\footnotesize \textcolor{red}{\text{(b)}}  Based on the assumption $\frac{1}{n} \sum_{i=1}^n \|\nabla f(w,i)-\nabla f(w)\|^2 \leq \sA\|\nabla f(w)\|^2+\sB^2$ and if $\sqrt{n} \lesssim \frac{\sB}{\varepsilon}$.} 
\item {\footnotesize\textcolor{red}{\text{(c)}} \cite{nguyen2021unified} provide the weaker result $\liminf_{k\to\infty}\|\nabla f(w^k)\|=0$.} 
\item {\footnotesize \textcolor{red}{\text{(d)}} For $\PSGD$, $\sL$ denotes the Lipschitz constant of $\nabla f$; For the other $\RR$-based methods, $\sL$ denotes the common Lipschitz constant of all $\nabla f(\cdot,i)$. Based on the condition $\frac{1}{n} \sum_{i=1}^n \|\nabla f(w,i)-\nabla f(w)\|^2 \leq \sB^2$ and $\lambda \lesssim \frac1{\rho}$, where $\rho$ is the weak convexity parameter of $\vp$.} 
\item {\footnotesize \textcolor{red}{\text{(e)}} The results in \citep{duchiruan2018,MajMiaMou18,davis2020stochastic} require almost sure boundedness of $\{w^k\}_k$; Alternatively, the analysis in \citep{LiMil22} uses Lipschitz continuity of $\vp$.} 
\item {\footnotesize \textcolor{red}{\text{(f)}} The result in \citep{mishchenko2022proximal} holds for the choice $\lambda = \frac{1}{\sL}\min\{\frac15,\frac{\varepsilon}{n^{-1/2}\sB+\sL^{-1/2}\zeta}\}$ if $\|\nabla f(w)\|^2 \leq \|\mathcal G_{\lambda}(w)\|^2 + \zeta^2$ for all $w \in \mathrm{dom}(\vp)$ and if $\frac{1}{n} \sum_{i=1}^n \|\nabla f(w,i)-\nabla f(w)\|^2 \leq \sB^2$ for all $w \in \Rn$.}   \item {\footnotesize \textcolor{red}{\text{(g)}} Our work does not make any explicit bounded variance assumptions. Instead, in \cref{lem:merit-descent-1}, it is shown that when $\alpha_k = \frac{\eta_k}{n}$, $\sum_{i=1}^\infty \eta_k^3 \lesssim \frac{1}{\sL^3}$, $\lambda \lesssim \frac{1}{\sL}$, we have $\frac{1}{n}\sum_{i=1}^{n}\|\nabla f(w^k,i) - \nabla f(w^k)\|^2 \leq \sB^2$ with $\sB \lesssim \sqrt{\sL}$; see also \cref{thm:compexity} and \cref{remark:table}. Note that we provide complexity bounds in terms of $\dist(0,\partial \psi(w^k))^2$, cf. \cref{coro:compexity}. By \eqref{eq:stat}, these bounds can be readily expressed using the natural residual $\|\mathcal G_\lambda(w^k)\|^2$ if $\lambda \lesssim \frac1{\rho}$.}
\end{tablenotes}

\label{table:super-nice}
\end{table}

\subsection{Contributions}

We design a new proximal random reshuffling method ($\NRR$) for nonconvex composite problems. In contrast to existing stochastic proximal methods, our approach is based on the so-called \emph{normal map} which swaps the order of evaluating $\nabla f$ and the proximity operator $\prox{\lambda\vp}$. We show that this exchanged order generally exhibits a better compatibility with without-replacement sampling schemes. In particular, similar to $\PSGD$ but different from $\EPRR$, $\NRR$ performs proximal steps at each inner iteration which allows {maintaining feasibility} or the {structure} induced by $\vp$. We now list some of our core contributions:

\begin{itemize}
\item We derive finite-time complexity bounds for $\NRR$ in the nonconvex, nonsmooth setting. In contrast to previous related works \citep{davis2019stochastic,mishchenko2022proximal}, our convergence results are formulated in terms of the subdifferential $\partial \psi$ of the original objective function $\psi$, rather than using the natural stationarity measure $\mathcal{G}_\lambda$. Specifically, under standard assumptions, we establish 
\begin{equation} \label{eq:contri-exp}
\begin{aligned}
    & {\min}_{k=1,\ldots,T}\;\dist(0,\partial \psi(w^k))^2 = \cO(T^{-2/3}) \;\;\; \text{and} \;\;\; \\ & {\min}_{k=1,\ldots,T}\;\Exp[\dist(0,\partial \psi(w^k))^2] = \cO(n^{-1/3}\,T^{-2/3}),
\end{aligned}
\end{equation}
where $T$ counts the total number of iterations\footnote{The measures $\mathcal{G}_\lambda$ and $\dist(0,\partial\psi(\cdot))$ are closely connected, i.e., we have $(1-\lambda\rho)\|{\mathcal G}_\lambda(w^k)\| \leq \dist(0,\partial\psi(w^k))$ for all $k$ and $\lambda < {\rho^{-1}}$, cf. \cref{eq:stat}. Hence, our results can also be naturally expressed in terms of $\mathcal G_\lambda$.}. The first result in \cref{eq:contri-exp} is a worst-case deterministic complexity bound that is applicable to any shuffling scheme. The second result shown in \cref{eq:contri-exp} holds for a uniform without-replacement sampling strategy. Both bounds match those of $\RR$ in the nonconvex smooth setting, \citep{mishchenko2020random,nguyen2021unified}. Compared to the complexity \cref{eq:complexity-eppr} for $\EPRR$\footnote{The bounds \cref{eq:complexity-eppr} and \cref{eq:contri-exp} capture the complexity of $\EPRR$ and $\NRR$ in terms of gradient evaluations. By design, $\EPRR$ has a better per-iteration dependence on proximal evaluations compared to $\NRR$.}, the in-expectation bound for $\NRR$ has a better dependence on $n$. We further prove that the sequence $\{\psi(w^k)\}_k$ can converge linearly to an optimal function value $\psi(w^*)$ under the Polyak-{\L}ojasiewicz condition and in the interpolation setting $\nabla f(w^*,1) = \dots = \nabla f(w^*,n)$.
\item  We provide an in-depth asymptotic analysis of $\NRR$. For diminishing step sizes $\{\alpha_k\}_k$ with $\sum_{k=0}^\infty \alpha_k = \infty$ and $\sum_{k=0}^\infty \alpha_k^3 < \infty$, we derive $\dist(0,\partial \psi(w^k)) \to 0$ and $\psi(w^k) \to \bar \psi$.
To our knowledge, this is the first asymptotic convergence guarantee for a proximal-type $\RR$ method. Moreover, when $\psi$ satisfies the KL inequality, we establish convergence of the whole sequence of iterates, i.e., $w^k \to w^*$, where $w^*$ is a stationary point of $\psi$. We further quantify the asymptotic rate of convergence of $\NRR$ when using polynomial step sizes $\alpha_k \sim k^{-\gamma}$, $\gamma\in(\half,1]$. The derived rates depend on the local geometry of $\psi$ around the stationary point $w^*$ which is captured by the KL exponent $\theta\in[0,1)$. In the case $\gamma = 1$, $\theta\in[0,\frac12]$, we obtain
\[ \|w^k-w^*\|=\cO(k^{-1}),\quad \dist(0,\partial \psi(w^k)) = \cO(k^{-1}),\quad |\psi(w^k)-\psi(w^*)|=\cO(k^{-2}). \]
These rates match existing results in the smooth or strongly convex setting, cf${.}$ \citep{gurbu2019,li2023convergence}. Comparable asymptotic rates for $\PSGD$ and $\EPRR$ do not seem to be available in the general nonconvex case. 
\item Finally, we present experiments that corroborate our theoretical findings on feasibility and potential linear convergence of $\NRR$ and we conduct numerical comparisons on a nonconvex binary classification and deep learning image classification problem.
\end{itemize}

An additional overview and discussion of the obtained results is provided in \cref{table:super-nice}.

\subsection{Basic Notations}\label{subsec:notation}
By $\iprod{\cdot}{\cdot}$ and $\|\cdot\| := \|\cdot\|_2$, we denote the standard Euclidean inner product and norm. For $n\in\N$, we define $[n] := \{1,2,\dots,n\}$. By convention, we set ${\sum}_{i=k}^{k-1}\, \eta_i =0$ for any $\{\eta_k\}_k$. 

\section{Preliminaries, the Full Algorithm, and Preparatory Lemmas}\label{sec:alg_lem}

We now present technical preliminaries, the main algorithm, and first preparatory results. 

\subsection{Basic Nonsmooth Concepts and First-Order Optimality}\label{subsec:notion}

We first recall several useful concepts from nonsmooth and variational analysis.  For a function $h : \Rn \to \Rex$, the Fr\'{e}chet (or regular) subdifferential of $h$ at $x$ is given by
\[\partial h(x) := \{g \in \Rn : h(y) \geq h(x) + \iprod{g}{y-x}+o(\|y-x\|) \;\; \text{as} \;\; y \to x\}, \] 
see, e.g., \cite[Chapter 8]{RocWet98}. If $h$ is convex, then the Fr\'{e}chet subdifferential coincides with the standard (convex) subdifferential. The mapping $h$ is said to be $\rho$-weakly convex if $h+\frac{\rho}{2}\|\cdot\|^2$, $\rho > 0$, is convex. The $\rho$-weak convexity of $h$ is equivalent to 
\begin{equation}
	\label{eq:weak-cvx}
	h(y) \geq h(x) + \iprod{s}{y-x} - \frac{\rho}{2}\norm{x-y}^2, \quad \forall~x, y, \;\; \forall~s\in \partial h(x),
\end{equation}
see, e.g., \citep{Via83,davis2019stochastic}. 

The first-order necessary optimality condition for the composite problem \cref{SO} is given by
\begin{equation} \label{eq:first-opti} 0 \in \partial \psi(w) = \nabla f(w) + \partial\varphi(w). \end{equation}
A point satisfying this inclusion is called a stationary point and  $\crit{\psi} := \{w \in \dom{\varphi}: 0 \in \partial\psi(w)\}$ denotes the set of all stationary points of $\psi$. It is well-known that the condition \cref{eq:first-opti} can be equivalently represented as a nonsmooth equation, \citep{RocWet98},
\[
\mathcal G_\lambda(w) := \lambda^{-1}({w - \prox{\lambda\varphi}(w - \lambda \nabla f(w))}) = 0, \quad \lambda > 0,
\]
where $\mathcal G_\lambda$ is the so-called \textit{natural residual}. The stationarity measure $\mathcal G_\lambda$ is widely used in the analysis of proximal methods.
If $\vp$ is $\rho$-weakly convex, then the proximity operator $\proxl(w):= {\argmin}_{y \in \Rn}~\vp(y) + \frac{1}{2\lambda}\|w-y\|^2$, $\lambda \in (0,\rho^{-1})$,
is $(1-\lambda\rho)$-cocoercive, i.e.,  
\begin{equation} \label{eq:prox-coco} \iprod{w-y}{\prox{\lambda\vp}(w)-\prox{\lambda\vp}(y)}\geq (1-\lambda\rho)\norm{\prox{\lambda\vp}(w)-\prox{\lambda\vp}(y)}^2, \quad \forall~w,y, \end{equation}
see, e.g., \cite[Proposition 3.3]{HohLabObe20}. In particular, $\prox{\lambda\vp}$ is Lipschitz continuous with modulus $(1-\lambda\rho)^{-1}$. In this work, we use the \emph{normal map} \citep{robinson1992normal}, 
\be \label{eq:def-normal} \Fnor(z) := \nabla f(\proxl(z)) + \lambda^{-1}(z-\proxl(z)) \in \partial \psi(\proxl(z)), \quad \lambda > 0, \ee
as an alternative stationarity measure for \cref{SO}. Here, the condition $\Fnor(z) \in \partial \psi(\proxl(z))$ follows directly from $z-\proxl(z) \in \lambda\partial\vp(\proxl(z))$, see \cite[Proposition 3.1]{HohLabObe20}. The normal map and the natural residual are closely related via
\be \label{eq:stat} (1-\lambda\rho)\|\mathcal G_\lambda(\proxl(z))\| \leq \dist(0,\partial\psi(\proxl(z))) \leq \|\Fnor(z)\| \quad \forall~z, \ee
where the first inequality can be shown by applying \cref{eq:prox-coco} and following the proof of \cite[Theorem 3.5]{drulew18}. Throughout this work and motivated by \cref{eq:stat}, we will typically measure and express convergence in terms of the distance $w \mapsto \dist(0,\partial\psi(w))$.

\begin{algorithm}[t]
	\caption{{$\NRR$}: \textbf{Nor}mal \textbf{m}ap-based \textbf{p}roximal \textbf{r}andom \textbf{r}eshuffling}\label{algo:nprr}
	\KwIn{Initial point $z^1\in\Rn$, $w^1 = \proxi{\lambda}(z^1)$ and parameters $\{\alpha_k\}_k\subset \R_{++}$, $\lambda>0$;}
	\BlankLine
	\For{$k=1,2,\ldots$}{
		Generate a permutation $\pi^{k}$ of $[n]$. Set $z_1^k = z^k$ and $w_1^k = w^k$\;\vspace{1mm}
		\For{$i = 1,2,\ldots, n$}
		{Compute $z_{i+1}^{k} = z_{i}^{k}-\alpha_{k}({\nabla {f}( w_{i}^{k},\pi^{k}_{i})+\frac{1}{\lambda}(z_i^k - w_i^k)})$ and
			$w_{i+1}^k = \proxi{\lambda}(z_{i+1}^{k})$\;
		}
		Set $z^{k+1} =  z_{n+1}^{k}$ and $w^{k+1} =  w_{n+1}^{k}$\;
	}
\end{algorithm}

\subsection{Algorithm Design}\label{subsec:alg}

\noindent\emph{From proximal gradient to normal map steps.} We motivate our approach by introducing an alternative representation of the traditional proximal gradient descent ($\PGD$) method, \citep{mine1981prox}, that separates the gradient and proximal steps. Let us define the auxiliary variable $z^{k+1} = w^k - \lambda \nabla f(w^k)$. The $\PGD$ update, $w^{k+1} = \prox{\lambda}(w^k - \lambda \nabla f(x^k))$, can then be expressed in the following form:
\begin{equation}\label{eq:equiv-form}
 z^{k+1} = z^{k} - \alpha [\nabla f(w^{k}) + \lambda^{-1}(z^{k} - w^k)] \;\;\; \text{and} \;\;\; w^{k+1}  = \prox{\lambda\vp}(z^{k+1}) \;\;\; \text{where} \;\;\; \alpha=\lambda.
\end{equation}
This equivalent formulation naturally introduces the \emph{normal map} $\Fnor(z) = \nabla f(w) + \lambda^{-1}(z-w)$ where $w = \prox{\lambda\vp}(z)$. Normal maps have been extensively used in the context of classical variational inequalities for the special case where the proximity operator $\prox{\lambda\vp}$ is given as the Euclidean projection onto a closed, convex set. We refer to \citep{facpan03} for more detailed background. \vspace{.5ex}

\noindent\emph{Advantages.} A remarkable feature of the normal map is its direct connection to the subdifferential of the objective function $\psi$. Specifically, based on \cref{eq:def-normal}, we have $\Fnor(z^k) \in \partial \psi(w^k)$, i.e., the normal map $\Fnor(z^k)$ is a special subgradient of $\psi$ at $w^k$. By contrast, it holds that $\mathcal G_\lambda(w^k) \in \nabla f(w^k) + \partial \vp(w^{k+1})$, i.e., the natural residual $\mathcal G_\lambda(w^k)$ is not necessarily a subgradient of $\psi$. Our aim is to leverage this new perspective and to study the behavior of the auxiliary iterates $\{z^k\}_k$ using the normal map $\Fnor$ as the underlying stationarity measure. Indeed, by \cref{eq:def-normal}, 
the stationarity condition $\|\Fnor(z^k)\| < \varepsilon$ immediately ensures that $\dist(0,\partial \psi(w^k))<\varepsilon$. Such a direct connection does not seem to exist between the traditional natural residual $\|\mathcal G_\lambda(w^k)\|$ and $\dist(0,\partial \psi(w^k))$.

In addition, the formulation \cref{eq:equiv-form} facilitates the decoupling of the proximal parameter $\lambda$ from the step size $\alpha$. In particular, the constant step size $\alpha$ can be substituted with a sequence of varying step sizes $\{\alpha_k\}_k$ without necessitating adjustments of the proximal parameter $\lambda$. This methodological flexibility can be advantageous in stochastic settings where diminishing step sizes are typical to mitigate stochastic errors. \vspace{.5ex}

\noindent\emph{Incorporating reshuffling.} We now discuss the full procedures of $\NRR$. Let $\Pi = \{\pi: \pi \text{ is a permutation of } [n]\}$ denote the set of all possible permutations of $[n]$. At each iteration $k$, a permutation $\pi^k$ is sampled from $\Pi$. The algorithm then updates $w^{k}$ to $w^{k+1}$ through $n$ consecutive normal map-type steps by using the stochastic gradients $\{\nabla f(\cdot, \pi^k_1), \ldots, \nabla f(\cdot, \pi^k_{n})\}$ sequentially: 
\be \label{eq:norm-am-update} z_{i+1}^{k} = z_{i}^{k}-\alpha_{k}({\nabla {f}( w_{i}^{k},\pi^{k}_{i})+\lambda^{-1}(z_i^k - w_i^k)}) \quad \text{and} \quad w_i^k = \proxl(z_i^k), \quad i = 1,\dots,n. \ee
Here, $\pi^k_i$ represents the $i$-th element of the permutation $\pi^k$ and the term $\nabla {f}( w_{i}^{k},\pi^{k}_{i})+\lambda^{-1}(z_i^k - w_i^k)$ approximates $\Fnor(z_i^k)$ by replacing the true gradient $\nabla f(w_i^k)$ with the component gradient $\nabla {f}( w_{i}^{k},\pi^{k}_{i})$. In each step of $\NRR$, only one single gradient component $\nabla f(\cdot, \pi^k_i)$, $i \in [n]$ and one proximal operator is evaluated. The pseudocode of $\NRR$ is shown in \Cref{algo:nprr}. In the case $\vp\equiv0$, $\NRR$ coincides with the original $\RR$ method. 

Based on \cref{eq:norm-am-update}, we can express the update of $z^k$ compactly via
\begin{equation}
		\label{eq:update-z}
		z^{k+1} = z^k -n\alpha_k \Fnor(z^k) + e^k,  
	\end{equation}
	where the error term $e^k$ is given by
	\begin{equation}
		\label{eq:error-e}
		e^k:=-\; \alpha_k \left[{\sum}_{i=1}^{n}(\Fnor(z^k_i) - \Fnor(z^k))+ {\sum}_{i=1}^{n}(\nabla f(w^k_i,\pi^k_i) - \nabla f(w^k_i)) \right].
	\end{equation}
We note that, in the deterministic case $n=1$ and $e^k=0$, $\NRR$ reduces to $\PGD$ once setting $\lambda \equiv \alpha_k$. As motivated, the procedure \cref{eq:update-z} can be interpreted as a special proximal gradient-type method with errors. 

\subsection{Assumptions and Error Estimates}\label{sec:descent}

\begin{assumption}[Functions \& Sampling] \label[assumption]{assumption:1}
	We consider the following basic conditions:

	\begin{enumerate}[label=\textup{\textrm{(F.\arabic*)}},topsep=0pt,itemsep=0ex,partopsep=0ex,leftmargin=8ex]
		\item \label{A1} Each mapping $\nabla f(\cdot,i)$, $i\in[n]$, is Lipschitz continuous on $\dom{\vp}$ with modulus $\sL>0$.
		\item \label{A2} The function $\varphi:\Rn\to(-\infty,\infty]$ is $\rho$-weakly convex, lsc., and proper. 
		\item \label{A3} There are $\flb,\vplb\in\R$ such that $f(w,i)\geq \flb$ and $\vp(w)\geq \vplb$, for all $w\in\dom{\vp}$ and $i\in[n]$. This also implies   $\psi(w) \geq \psilb :=\flb+\vplb$ for all $w\in\dom{\vp}$.
	\end{enumerate}
\begin{enumerate}[label=\textup{\textrm{(S.\arabic*)}},topsep=1ex,itemsep=0ex,partopsep=0ex,leftmargin=8ex]
		\item \label{S1} The permutations $\{\pi^k\}_k$ are sampled independently (for each $k$) and uniformly without replacement from $[n]$.
\end{enumerate}
\end{assumption}

The conditions \ref{A1} and \ref{A2} are standard in nonconvex and nonsmooth optimization, see, e.g., \citep{GhaLanZha16,davis2019stochastic,yang2021stochastic}. In \ref{A2}, we only assume $\vp$ to be weakly convex. The class of weakly convex functions is rich and allows us to cover important nonconvex regularizations, including, e.g., the student-$t$ loss \citep{aravkin2011robust}, the minimax concave penalty \citep{zhang2010nearly}, and the smoothly clipped absolute deviation penalty \citep{fan2001variable}. Combining \ref{A1} and \ref{A3} and using the descent lemma, we can derive the following bound for the gradients $\nabla f(\cdot,i)$: 
\[\norm{\nabla f(w,i)}^2 \leq 2\sL [f(w,i)-\flb],\quad \forall \; i\in [n], 
\]
see, e.g., \citep{nesterov2018lectures} or \cite[eqn. (2.5)]{li2023convergence}. Assumption \ref{S1} is a standard requirement on the sampling scheme used in random reshuffling methods, \citep{mishchenko2020random,mishchenko2022proximal,nguyen2021unified}.

We now establish first estimates for the error term $e^k$ defined in \cref{eq:update-z}. In particular, we provide a link between the errors $\{e^k\}_k$ defined in \eqref{eq:error-e}, the step sizes $\{\alpha_k\}_k$, the variance
\begin{equation}
    \label{eq:variance}
    \sigma_k^2 := \frac{1}{n}{\sum}_{i=1}^{n}\norm{\nabla f(w^k,i) - \nabla f(w^k) }^2,
\end{equation}
and the normal map $\Fnor$. To proceed, let us also formally define the filtration $\{\mathcal F_k\}_k$ where $\mathcal F_{k} := \sigma(\pi^1,\dots,\pi^{k})$ is the $\sigma$-algebra generated by the permutations $\pi^1,\dots,\pi^k$. (We may also set $\mathcal F_0 := \sigma(z^1)$). Then, it follows $z^{k+1}, w^{k+1}, e^k \in \mathcal F_k$ for all $k \geq 1$. Let us further introduce $\Exp_k[\cdot] := \Exp[\cdot\,|\,\mathcal F_{k-1}]$.

\begin{lemma}[Error Estimates]
	\label[lemma]{lem:est-err}
	Let the conditions \ref{A1}--\ref{A2} be satisfied and let $\{w^k\}_k$ and $\{z^k\}_k$ be generated by $\NRR$ with $\lambda\in(0,\frac{1}{\rho})$, $0<\alpha_k \leq \frac{1}{\sqrt{2\CL}n}$, 
 and $\CL:=4[\frac{3\sL + 2\lambda^{-1}-\rho}{1-\lambda\rho}]^2$. 
	\begin{enumerate}[label=\textup{(\alph*)},topsep=1ex,itemsep=0ex,partopsep=0ex]
		\item It holds that $\norm{e^k}^2 \leq  \CL n^4 \alpha_k^4 [\norm{\Fnor(z^k)}^2 + \sigma_k^2 \big]$ for all $k\geq 1$.
		\item Additionally, under \ref{S1}, it follows
  \[
 \Exp_k[\norm{e^k}^2] \leq  \CL n^4 \alpha_k^4 [\norm{\Fnor(z^k)}^2 + n^{-1}{\sigma_k^2}] \quad \forall~k \geq 1 \quad (\text{almost surely}). \]       
\end{enumerate}
\end{lemma}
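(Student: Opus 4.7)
The plan is to exploit the fundamental identity $\sum_{i=1}^n [\nabla f(w^k,\pi_i^k) - \nabla f(w^k)] = 0$, which holds deterministically for any permutation $\pi^k$, to recast $e^k$ in a form where each summand is controlled by the displacement $u_i := \|z_i^k - z^k\|$. Setting $V_i := \nabla f(w_i^k,\pi_i^k) + \lambda^{-1}(z_i^k - w_i^k)$, the update rule in \Cref{algo:nprr} yields $e^k = -\alpha_k \sum_{i=1}^n (V_i - \Fnor(z^k))$. Adding and subtracting $\nabla f(w^k,\pi_i^k)$ and using $\Fnor(z^k) = \nabla f(w^k) + \lambda^{-1}(z^k - w^k)$, each summand splits as
\[ V_i - \Fnor(z^k) = [\nabla f(w_i^k,\pi_i^k) - \nabla f(w^k,\pi_i^k)] + [\nabla f(w^k,\pi_i^k) - \nabla f(w^k)] + \lambda^{-1}[(z_i^k - z^k) - (w_i^k - w^k)]. \]
Upon summation the middle term vanishes by the cancellation above, while the remaining two pieces are controlled via \ref{A1} and the $(1-\lambda\rho)^{-1}$-Lipschitz continuity of $\proxl$ from \cref{eq:prox-coco}. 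This produces the clean structural estimate
\[ \|e^k\| \leq L_F \alpha_k \sum_{i=1}^n u_i, \qquad L_F := \tfrac{\sL + 2\lambda^{-1} - \rho}{1-\lambda\rho}. \]

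For part (a), I would next bound $u_i$ by applying the triangle inequality to its definition $u_i = \|\alpha_k \sum_{j<i} V_j\|$. The same type of splitting yields $\|V_j\| \leq M u_j + \|\Fnor(z^k)\| + \xi_j$, with $M := L_F + 2\sL(1-\lambda\rho)^{-1}$ (so $\sqrt{\CL} = 2M$) and $\xi_j := \|\nabla f(w^k,\pi_j^k) - \nabla f(w^k)\|$. Using Cauchy--Schwarz together with the permutation-invariance $\sum_j \xi_j^2 = n\sigma_k^2$ to bound $\sum_j \xi_j \leq n\sigma_k$, summing the pointwise estimate produces the self-referential inequality $S \leq n^2 \alpha_k (\|\Fnor(z^k)\| + \sigma_k) + M n\alpha_k\, S$, where $S := \sum_{i=1}^{n+1} u_i$. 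The hypothesis $\alpha_k \leq 1/(\sqrt{2\CL}\,n)$ forces $Mn\alpha_k \leq 1/(2\sqrt{2}) < 1$, so $S$ can be solved for explicitly, giving $S \lesssim n^2 \alpha_k (\|\Fnor(z^k)\| + \sigma_k)$. Inserting this back into the bound for $\|e^k\|$ and squaring via $(a+b)^2 \leq 2(a^2 + b^2)$ delivers the claim in part (a).

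For part (b), the additional $n^{-1}$ factor relies on the without-replacement variance identity
\[ \Exp_k[\|{\textstyle\sum_{j=1}^m} (\nabla f(w^k,\pi_j^k) - \nabla f(w^k))\|^2] = \tfrac{m(n-m)}{n-1}\sigma_k^2, \]
which is applicable because $w^k \in \cF_{k-1}$ is constant under $\Exp_k$. I would square the triangle-inequality bound $u_i \leq \alpha_k(i-1)\|\Fnor(z^k)\| + \alpha_k\|\sum_{j<i}(V_j - \Fnor(z^k))\|$ and further split the inner sum exactly as in the first paragraph: the ``pure'' permutation piece admits the variance identity above, while the residual terms contribute $L_F^2 (i-1) \sum_{j<i} \Exp_k u_j^2$ to $\Exp_k u_i^2$ (by Cauchy--Schwarz). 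Summing over $i \leq n+1$, using $\sum_i (i-1)(n-i+1)/(n-1) = O(n^2)$, and solving the resulting discrete Grönwall inequality (again contracted by the step-size condition) yields $\sum_i \Exp_k u_i^2 \lesssim n^3 \alpha_k^2 \|\Fnor(z^k)\|^2 + n^2 \alpha_k^2 \sigma_k^2$. Substituting into $\Exp_k \|e^k\|^2 \leq L_F^2 \alpha_k^2 n \sum_i \Exp_k u_i^2$ (Cauchy--Schwarz applied to the bound of the first paragraph) produces the claim in part (b).

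The main obstacle I anticipate is the careful bookkeeping in the two Grönwall-type recursions for $S$ and $\sum_i \Exp_k u_i^2$: both are inherently self-referential, and the step-size hypothesis $Mn\alpha_k \leq 1/(2\sqrt{2})$ must strictly contract each recursion so that the self-term can be absorbed into the constant $\CL$. A secondary subtlety is that, unlike in i.i.d.\ settings, the increments $V_j - \Fnor(z^k)$ are coupled through the without-replacement sampling (since $w_j^k,z_j^k$ themselves depend on $\pi_1^k,\ldots,\pi_{j-1}^k$); the variance identity can therefore only be applied to the isolated permutation piece $\nabla f(w^k,\pi_j^k) - \nabla f(w^k)$, while all other pieces must be estimated deterministically through their dependence on $w_j^k - w^k$ and $z_j^k - z^k$.
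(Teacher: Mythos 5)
Your proposal is correct and follows essentially the same route as the paper's proof: reduce $\|e^k\|$ to a constant times $\alpha_k\sum_{i}\|z_i^k-z^k\|$ via the Lipschitz continuity of $\nabla f(\cdot,i)$ and of $\prox{\lambda\vp}$, control that sum through a self-referential inequality contracted by the hypothesis $\alpha_k\leq\frac{1}{\sqrt{2\CL}n}$, and isolate the partial sums $\sum_{j\le i}[\nabla f(w^k,\pi_j^k)-\nabla f(w^k)]$ so that a deterministic Cauchy--Schwarz bound gives part (a) while the without-replacement variance identity of \cref{lem:sample-without-re} supplies the extra factor $n^{-1}$ in part (b). The only deviations are organizational---the paper establishes a single deterministic bound $\|e^k\|^2\lesssim \alpha_k^4[n^4\|\Fnor(z^k)\|^2+n\sum_i\Upsilon_i^2]$ and then takes $\Exp_k$ of the permutation partial sums alone, whereas you run a second squared recursion for $\Exp_k\|z_i^k-z^k\|^2$---and both demand exactly the constant bookkeeping you already flag to land on the stated $\CL$.
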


The proof of \cref{lem:est-err} is presented in \cref{proof:est-err}. Next, we provide an upper bound for the variance terms $\{\sigma_k^2\}_k$.
\begin{lemma}[Variance Bound]
	\label[lemma]{lem:var-bound}
	Let $\{w^k\}_k \subseteq \dom{\vp}$ be given and assume that \ref{A1} and \ref{A3} are satisfied. Then, it holds that
 \[
 \sigma_k^2 \leq 2\sL[f(w^k) - \flb] \leq 2\sL[\psi(w^k) - \psilb] \quad \forall~k.
 \]
\end{lemma}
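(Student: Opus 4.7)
The plan is to exploit two basic facts already stated in the excerpt: (i) the bias–variance identity for a sample mean, which gives $\tfrac{1}{n}\sum_{i=1}^n \|x_i - \bar x\|^2 \leq \tfrac{1}{n}\sum_{i=1}^n \|x_i\|^2$, and (ii) the gradient–gap inequality $\|\nabla f(w,i)\|^2 \leq 2\sL[f(w,i)-\flb]$ derived from \ref{A1} and \ref{A3} via the descent lemma, which is explicitly recorded immediately after \cref{assumption:1}. Combining these with the finite-sum representation \cref{finite-sum} of $f$ should yield the first inequality directly.

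Concretely, I would first note that for any $w\in\dom{\vp}$, the quantity $\nabla f(w) = \tfrac{1}{n}\sum_{i=1}^n \nabla f(w,i)$ is the empirical mean of the vectors $\nabla f(w,i)$. Expanding the squared norms yields
\[
\sigma_k^2 = \frac{1}{n}{\sum}_{i=1}^n \|\nabla f(w^k,i)\|^2 - \|\nabla f(w^k)\|^2 \leq \frac{1}{n}{\sum}_{i=1}^n \|\nabla f(w^k,i)\|^2.
\]
Applying $\|\nabla f(w^k,i)\|^2 \leq 2\sL[f(w^k,i) - \flb]$ term-by-term and using $\tfrac{1}{n}\sum_{i=1}^n f(w^k,i) = f(w^k)$ then gives
\[
\sigma_k^2 \leq \frac{2\sL}{n}{\sum}_{i=1}^n [f(w^k,i) - \flb] = 2\sL[f(w^k) - \flb],
\]
which is the first claimed inequality.

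For the second inequality, I would invoke the lower bound $\vp(w^k)\geq \vplb$ from \ref{A3} together with the definition $\psilb = \flb + \vplb$; this yields $f(w^k) - \flb \leq f(w^k) - \flb + \vp(w^k) - \vplb = \psi(w^k) - \psilb$, and multiplying by $2\sL$ finishes the proof.

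There is no real obstacle here: everything reduces to the mean-minus-variance identity together with the componentwise gradient bound that is already established from \ref{A1} and \ref{A3}. The only thing to be careful about is that the componentwise bound requires $w^k \in \dom{\vp}$ (so that Lipschitzness of each $\nabla f(\cdot,i)$ on $\dom{\vp}$ applies), which is exactly the hypothesis $\{w^k\}_k \subseteq \dom{\vp}$ imposed in the statement.
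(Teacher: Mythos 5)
Your proposal is correct and follows essentially the same route as the paper: the paper also expands $\sigma_k^2$ via $\|a-b\|^2=\|a\|^2-2\iprod{a}{b}+\|b\|^2$ to reach $\sigma_k^2\leq \frac1n\sum_{i=1}^n\|\nabla f(w^k,i)\|^2$, then applies the componentwise bound $\|\nabla f(w,i)\|^2\leq 2\sL[f(w,i)-\flb]$ and $f(w)-\flb\leq\psi(w)-\psilb$. No gaps.
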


\begin{proof}
Using $\|a-b\|^2 = \|a\|^2 - 2\iprod{a}{b} + \|b\|^2$, we have 
%
\begin{align*}
     \sigma_k^2 &= \|\nabla f(w^k)\|^2 + \frac{1}{n}{\sum}_{i=1}^{n} \|\nabla f(w^k,i)\|^2 - \frac{2}{n}{\sum}_{i=1}^{n}\iprod{\nabla f(w^k,i)}{\nabla f(w^k)} \\
     &\leq \frac{1}{n}{\sum}_{i=1}^{n}\|\nabla f(w^k,i)\|^2 \leq 2\sL[f(w^k) - \flb] \leq 2\sL[\psi(w^k) - \psilb],
\end{align*}
where the last line is due to ${\sum}_{i=1}^{n}\|\nabla f(w,i)\|^2 \leq 2\sL {\sum}_{i=1}^{n} [f(w,i) -\flb] = 2\sL n [f(w) - \flb]$ and $f(w) - \flb \leq \psi(w) - \psilb$ for all $w \in \dom{\vp}$.
\end{proof}

\subsection{Merit Function and Approximate Descent}
	\label{subsec:descent-time-win}	
 Descent-type properties serve as a fundamental cornerstone when establishing iteration complexity and asymptotic convergence of algorithms. In the classical random reshuffling method and in its proximal version $\EPRR$, descent is measured directly on the objective function $f$ and $\psi$, respectively \citep{mishchenko2020random,mishchenko2022proximal,nguyen2021unified,li2023convergence}. By contrast and motivated by the subgradient condition $\Fnor(z) \in \partial\psi(\prox{\lambda\vp}(z))$, we analyze descent of $\NRR$ on an auxiliary merit function $\mer$ that is different from $\psi$. The merit function $\mer$ was initially introduced by \cite{ouyang2021trust}.

	\begin{definition}[Merit Function]\label{def:mer-fun} Let the constants $\tau,\lambda>0$ be given. The merit function $H_\tau:\Rn\to\R$ is defined as follows:
		\[ \mer(z):=\psi(\prox{\lambda\vp}(z)) + \frac{\tau\lambda}{2}\norm{\Fnor(z)}^2.\]
	\end{definition}
Provided that $\lambda\in (0,\frac{1}{4\rho})$, we will typically work with the following fixed choice of $\tau$: 
\begin{equation} \label{eq:tau} \tau:=\frac{1-4\lambda\rho}{2(1-2\lambda\rho+\lambda^2\sL^2)}. \end{equation}
We now present a first preliminary descent-type property for $\NRR$ using $\mer$. The detailed proof of \cref{lem:merit-descent-0} can be found in \cref{subsec:proof-lem:merit-descent-0}.

\begin{lemma}\label[lemma]{lem:merit-descent-0}
	Suppose \ref{A1}--\ref{A2} are satisfied and let the iterates $\{w^k\}_k$ and $\{z^k\}_k$ be generated by $\NRR$ with $\lambda\in(0,\frac{1}{4\rho})$ and  $0< \alpha_k \leq \frac{1}{10\sL n}$. Setting $\tau=\frac{1-4\lambda\rho}{2(1-2\lambda\rho+\lambda^2\sL^2)}$, it holds that
 \[\mer(z^{k+1})-\mer(z^k) 
	\leq - \frac{\tau  n \alpha_k}{4}\left[{1+\frac{ n \alpha_k}{\lambda}}\right]\norm{\Fnor(z^k)}^2 - \frac{1}{8n \alpha_k}\norm{w^{k+1}-w^k}^2 + \frac{1}{n \alpha_k} \norm{e^k}^2.\]
\end{lemma}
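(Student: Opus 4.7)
The plan is to decompose $\mer(z^{k+1})-\mer(z^k)$ into the function-value piece $\psi(w^{k+1})-\psi(w^k)$ and the normal-map piece $\tfrac{\tau\lambda}{2}(\|\Fnor(z^{k+1})\|^2-\|\Fnor(z^k)\|^2)$, bound each one separately, and then combine them. Throughout, I would exploit the representation of $\NRR$ as a perturbed normal-map step $z^{k+1}=z^k-n\alpha_k\Fnor(z^k)+e^k$ from \cref{eq:update-z}, together with $w^k=\proxl(z^k)$ and the inclusion $\lambda^{-1}(z^k-w^k)\in\partial\vp(w^k)$.

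For the function-value piece, I would apply the descent lemma to $f$ (via \ref{A1}) and the weak-convexity inequality \cref{eq:weak-cvx} to $\vp$ at $w^{k+1}$ using the subgradient $\lambda^{-1}(z^{k+1}-w^{k+1})$. Summing, rewriting $\nabla f(w^k)+\lambda^{-1}(z^{k+1}-w^{k+1})=\Fnor(z^k)+\lambda^{-1}(z^{k+1}-z^k)-\lambda^{-1}(w^{k+1}-w^k)$, and inserting the $\NRR$ update produces an estimate of the form
\[
\psi(w^{k+1})-\psi(w^k) \leq (1-\tfrac{n\alpha_k}{\lambda})\langle \Fnor(z^k),w^{k+1}-w^k\rangle + \lambda^{-1}\langle e^k,w^{k+1}-w^k\rangle + (\tfrac{\sL+\rho}{2}-\tfrac{1}{\lambda})\|w^{k+1}-w^k\|^2.
\]
To tame the first cross term I would invoke the cocoercivity of $\proxl$ from \cref{eq:prox-coco}: the bound $\langle z^{k+1}-z^k,w^{k+1}-w^k\rangle\geq(1-\lambda\rho)\|w^{k+1}-w^k\|^2$ combined with the $\NRR$ update converts $\langle \Fnor(z^k),w^{k+1}-w^k\rangle$ into a negative $-\tfrac{1-\lambda\rho}{n\alpha_k}\|w^{k+1}-w^k\|^2$ contribution (plus a residual error inner product). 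This is the step that injects the dominating $1/(n\alpha_k)$ scaling that the target coefficient $-\tfrac{1}{8n\alpha_k}$ demands.

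For the normal-map piece I would use the identity $\|a\|^2-\|b\|^2 = 2\langle b,a-b\rangle + \|a-b\|^2$ with $a=\Fnor(z^{k+1})$, $b=\Fnor(z^k)$, decompose $\Fnor(z^{k+1})-\Fnor(z^k)=(\nabla f(w^{k+1})-\nabla f(w^k))+\lambda^{-1}(z^{k+1}-z^k)-\lambda^{-1}(w^{k+1}-w^k)$, and plug in the update so that the cross term $2\langle \Fnor(z^k), \Fnor(z^{k+1})-\Fnor(z^k)\rangle$ contributes the leading $-2n\alpha_k\lambda^{-1}\|\Fnor(z^k)\|^2$. For the quadratic $\|\Fnor(z^{k+1})-\Fnor(z^k)\|^2$, the $\sL$-smoothness of $f$ together with the Lipschitz estimate $\|w^{k+1}-w^k\|\leq(1-\lambda\rho)^{-1}\|z^{k+1}-z^k\|$ (again from \cref{eq:prox-coco}) yields a bound of the form $C\|z^{k+1}-z^k\|^2\leq 2Cn^2\alpha_k^2\|\Fnor(z^k)\|^2+2C\|e^k\|^2$. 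Splitting the remaining mixed terms with Young's inequality (for the $e^k$ inner products) and collecting coefficients then reduces the whole expression to $-\tfrac{\tau n\alpha_k}{4}(1+\tfrac{n\alpha_k}{\lambda})\|\Fnor(z^k)\|^2-\tfrac{1}{8n\alpha_k}\|w^{k+1}-w^k\|^2+\tfrac{1}{n\alpha_k}\|e^k\|^2$; the step-size restriction $\alpha_k\leq\tfrac{1}{10\sL n}$ is exactly what keeps every prefactor of the form $n\alpha_k\sL$ or $(n\alpha_k/\lambda)^2$ small enough for these absorptions.

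The technical heart of the proof is the choice of $\tau$. The formula $\tau=\tfrac{1-4\lambda\rho}{2(1-2\lambda\rho+\lambda^2\sL^2)}$ is expected to emerge as the exact solution of a small scalar inequality tying together the Lipschitz constant $\sL$, the cocoercivity factor $(1-\lambda\rho)^{-1}$, and the weak-convexity modulus $\rho$, while the assumption $\lambda<\tfrac{1}{4\rho}$ is precisely the strength needed so that $1-4\lambda\rho>0$ and the cross-term absorption lands on the targeted constants. No individual manipulation is deep; the real difficulty is purely bookkeeping — keeping every constant consistent across the two pieces so that they combine into the stated coefficients exactly, especially given that the sign of $1-n\alpha_k/\lambda$ is not fixed under the hypotheses and therefore has to be handled via two-sided Young splits rather than a one-shot estimate.
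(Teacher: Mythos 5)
Your proposal follows essentially the same route as the paper's proof: the paper establishes the two halves as separate preparatory lemmas (\cref{lem:first-descent,lem:Fnor-bound}, i.e., the descent-lemma/weak-convexity bound on $\psi(w^{k+1})-\psi(w^k)$ with the subgradient $\lambda^{-1}(z^{k+1}-w^{k+1})\in\partial\vp(w^{k+1})$, and the expansion of $\norm{\Fnor(z^{k+1})}^2$ via $\Fnor(z^{k+1})=[1-\frac{n\alpha_k}{\lambda}]\Fnor(z^k)+(\nabla f(w^{k+1})-\nabla f(w^k))-\lambda^{-1}(w^{k+1}-w^k)+\lambda^{-1}e^k$), and then combines them exactly as you describe, substituting the update rule into the residual coefficient $h^k$, applying cocoercivity of $\proxl$ to generate the $-\frac{1}{8n\alpha_k}\norm{w^{k+1}-w^k}^2$ term, and splitting the error cross terms with Young's inequality, with $\tau$ and the bound $\alpha_k\le\frac{1}{10\sL n}$ absorbing the remaining coefficients. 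The only point of divergence is your closing caveat about the sign of $1-\frac{n\alpha_k}{\lambda}$: the paper does not perform a two-sided split but simply asserts $n\alpha_k\in(0,\lambda)$, so that the coefficient of $\iprod{z^{k+1}-z^k}{w^{k+1}-w^k}$ is negative and cocoercivity applies in one shot.
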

Based on \cref{lem:var-bound,lem:est-err,lem:merit-descent-0}, we can establish approximate descent of $\NRR$. 
\begin{lemma}[Approximate Descent] \label[lemma]{lem:merit-descent-1}
	Let \ref{A1}--\ref{A2} hold and let $\{w^k\}_k$, $\{z^k\}_k$ be generated by $\NRR$ with $\lambda\in(0,\frac{1}{4\rho})$ and step sizes $\{\alpha_k\}_k$ satisfying 
	\[0<\alpha_k \leq \frac{1}{n}\cdot \max\left\{\sqrt{2\CL},\; 10\sL,\; {4\CL\lambda\tau^{-1}} \right\}^{-1}=: \frac{\bar\alpha}{n}.\] 
Here, $\CL$ and $\tau$ are defined in \cref{lem:est-err} and \cref{eq:tau}. We set $\Delta(t):= 2\sL \CL \exp(2\sL \CL t)  [\mer(z^1) -\psilb]$.\\[-1mm]
	\begin{enumerate}[label=\textup{(\alph*)},topsep=0pt,itemsep=0ex,partopsep=0ex]
	\item The following descent-type estimate holds for all $k \geq 1$:
 \[
 \mer(z^{k+1}) \leq \mer(z^k) - \frac{1}{8 n \alpha_k}\norm{w^{k+1}-w^k}^2 - \frac{\tau n \alpha_k}{4}\norm{\Fnor(z^k)}^2 + \CL n^3\alpha_k^3 \sigma_k^2.
 \]
Furthermore, if \ref{A3} holds and we have ${\sum}_{k=1}^{\infty}\alpha_k^3<\infty$, then  $\CL\sigma_k^2 \leq \Delta(n^3\sum_{i=1}^\infty\alpha_i^3)$.
 \item Additionally, if the sampling scheme satisfies condition \ref{S1}, it holds that 
 \[
 \Exp_k[\mer(z^{k+1})] \leq  \mer(z^k) - \frac{\tau n \alpha_k}{4} \norm{\Fnor(z^k)}^2 + \CL n^2\alpha_k^3\sigma_k^2 \quad \text{(almost surely)}.
 \]
 Moreover, under \ref{A3} and ${\sum}_{k=1}^{\infty}\alpha_k^3<\infty$, we have $\CL\Exp[\sigma_k^2] \leq \Delta(n^2\sum_{i=1}^\infty\alpha_i^3)$.
%
	\end{enumerate}
%
\end{lemma}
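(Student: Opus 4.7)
The plan is to deduce both parts of the lemma from the already established descent inequality in \cref{lem:merit-descent-0} by bounding the error term $\|e^k\|^2$ with \cref{lem:est-err} and then using \cref{lem:var-bound} in a Gr\"onwall-type argument to control the variances $\{\sigma_k^2\}_k$.

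For part (a), I would begin with the inequality from \cref{lem:merit-descent-0},
\[ \mer(z^{k+1}) - \mer(z^k) \leq -\frac{\tau n \alpha_k}{4}\Bigl[1 + \frac{n\alpha_k}{\lambda}\Bigr]\norm{\Fnor(z^k)}^2 - \frac{1}{8n\alpha_k}\norm{w^{k+1}-w^k}^2 + \frac{1}{n\alpha_k}\norm{e^k}^2, \]
and insert the deterministic error bound $\|e^k\|^2 \leq \CL n^4 \alpha_k^4[\|\Fnor(z^k)\|^2 + \sigma_k^2]$ from \cref{lem:est-err}(a). This yields an extra term $\CL n^3 \alpha_k^3 \|\Fnor(z^k)\|^2$ that must be absorbed by the $\frac{\tau n \alpha_k}{4}\cdot\frac{n\alpha_k}{\lambda}\|\Fnor(z^k)\|^2$ contribution in the descent estimate. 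This absorption is possible exactly when $\alpha_k \leq \tau/(4\CL\lambda n)$, and the hypothesis $\alpha_k \leq \bar\alpha/n$ with $\bar\alpha \leq \tau/(4\CL\lambda)$ is precisely what makes this work. This delivers the first claim of (a). For part (b), I would take conditional expectation $\Exp_k[\cdot]$ of the same descent inequality, apply the sharper bound from \cref{lem:est-err}(b) (the variance term improves by a factor $1/n$ thanks to uniform without-replacement sampling), absorb the $\|\Fnor(z^k)\|^2$ contribution exactly as above, and finally drop the non-positive step-difference term.

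The remaining, and I expect most delicate, task is the uniform bound on $\sigma_k^2$. The key observation is that $\mer(z) \geq \psi(\proxl(z))$, so \cref{lem:var-bound} gives $\sigma_k^2 \leq 2\sL[\mer(z^k) - \psilb]$. Writing $u_k := \mer(z^k) - \psilb \geq 0$, and dropping all non-positive terms in the deterministic descent inequality just obtained, one has $u_{k+1} \leq u_k + \CL n^3 \alpha_k^3 \sigma_k^2 \leq (1 + 2\sL\CL n^3 \alpha_k^3)\,u_k$. Iterating and using $1+x \leq e^x$ yields
\[ u_k \leq u_1 \exp\Bigl(2\sL\CL n^3 {\sum}_{j=1}^{k-1}\alpha_j^3\Bigr) \leq u_1 \exp\Bigl(2\sL\CL n^3 {\sum}_{j=1}^{\infty}\alpha_j^3\Bigr), \]
whence $\CL \sigma_k^2 \leq 2\sL\CL u_k \leq \Delta(n^3 \sum_{j=1}^\infty \alpha_j^3)$, which is exactly the claim of (a). For (b), the same Gr\"onwall-type argument applied to the \emph{expected} merit function $\tilde u_k := \Exp[\mer(z^k)] - \psilb$, using the sharper factor $n^2$ (instead of $n^3$) that appears after taking expectations, gives $\CL \Exp[\sigma_k^2] \leq \Delta(n^2 \sum_{j=1}^\infty \alpha_j^3)$.

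The main obstacle I anticipate is the tight bookkeeping required in the first step: the step-size condition $\bar\alpha \leq \tau/(4\CL\lambda)$ must be carefully matched to the coefficient $\frac{\tau n \alpha_k}{4}\cdot\frac{n\alpha_k}{\lambda}$ that is available on the negative side of \cref{lem:merit-descent-0}, since any looser bound on $\|e^k\|^2$ or any weaker factor in the $\|\Fnor(z^k)\|^2$ term would prevent the clean residual estimate $-\frac{\tau n \alpha_k}{4}\|\Fnor(z^k)\|^2$. The Gr\"onwall step itself is routine once one notices the useful identity $\mer(z^k) \geq \psi(w^k)$, which is what makes the variance bound from \cref{lem:var-bound} compatible with the merit function governing the descent.
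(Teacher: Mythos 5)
Your proposal is correct and follows essentially the same route as the paper: apply \cref{lem:merit-descent-0}, insert the error bounds from \cref{lem:est-err}, absorb the resulting $\|\Fnor(z^k)\|^2$ contribution using $\alpha_k \leq \tau/(4\CL\lambda n)$, and then control $\sigma_k^2$ via \cref{lem:var-bound} together with the observation $\psi(w^k)\leq \mer(z^k)$ and the product estimate $1+x\leq e^x$. The expected-value variant in part (b) is likewise handled exactly as in the paper, with the improved factor $n^2$ coming from \cref{lem:est-err}~(b).
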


\begin{proof} Applying \cref{lem:merit-descent-0} and \cref{lem:est-err}~(a), we obtain
\begin{equation}\label{eq:lem-descent-(a)}
    \begin{aligned}
        &\hspace{-8ex} \mer(z^{k+1})-\mer(z^k) + \frac{1}{8n \alpha_k}\norm{w^{k+1}-w^k}^2 + \frac{\tau n \alpha_k}{4}\norm{\Fnor(z^k)}^2\\
        & \leq  \left[{\CL n \alpha_k - \frac{\tau}{4\lambda} }\right] n^2\alpha_k^2\norm{\Fnor(z^k)}^2 + \CL n^3\alpha_k^3 \sigma_k^2 \leq \CL n^3\alpha_k^3 \sigma_k^2,
    \end{aligned}
\end{equation}
  where the last inequality follows from $\alpha_k \leq \frac{\tau}{4\CL\lambda n}$. Moreover, when \ref{A3} is satisfied, we can use \cref{lem:var-bound} in \cref{eq:lem-descent-(a)}. This yields
\begin{equation}\label{eq:lem-3-est-6}
\mer(z^{k+1})-\mer(z^k) + \frac{1}{8 n \alpha_k}\norm{w^{k+1}-w^k}^2 + \frac{\tau n \alpha_k}{4}\norm{\Fnor(z^k)}^2 \leq  2\sL \CL n^3\alpha_k^3 [\psi(w^k) - \psilb].
 \end{equation}
Subtracting $\psilb$ on both sides of \cref{eq:lem-3-est-6} and noting $\psi(w^k) \leq \mer(z^k)$, this further implies 
\[ \mer(z^{k+1}) - \psilb \leq (1+2\sL \CL n^3\alpha_k^3)[\mer(z^k)-\psilb] \quad \forall~k\geq 1.
\]
Hence, using $1+x \leq \exp(x)$, $x \geq 0$, we can infer    
\begin{align*}
    \mer(z^{k+1}) - \psilb & \leq {\prod}_{i=1}^{k}\prt{1 + 2\sL \CL n^3\alpha_i^3} [\mer(z^1) -\psilb]
    \\ & \hspace{-6ex} \leq \exp\Big({2\sL \CL n^3{\sum}_{i=1}^{k} \alpha_i^3}\Big) [\mer(z^1) -\psilb] \leq \exp\Big({2\sL \CL n^3{\sum}_{i=1}^{\infty} \alpha_i^3}\Big) [\mer(z^1) -\psilb]. 
\end{align*}
Thus, we have $\psi(w^k) - \psilb \leq \mer(z^k) - \psilb \leq \exp({2\sL \CL n^3{\sum}_{i=1}^{\infty} \alpha_i^3}) [\mer(z^1) -\psilb] $ for all $k \geq 1$ and it follows $\CL\sigma_k^2 \leq \Delta(n^3\sum_{i=1}^\infty\alpha_i^3)$. 

We continue with the proof of part (b). Taking the conditional expectation in \cref{lem:merit-descent-0} and using \cref{lem:est-err}~(b) and $\alpha_k \leq \frac{\tau}{4\CL\lambda n}$, we obtain
\begin{align*}
    &\hspace{-4ex}\Exp_k[\mer(z^{k+1})]-\mer(z^k) + \frac{\tau  n \alpha_k}{4} \norm{\Fnor(z^k)}^2 \\
		&\leq - \frac{\tau  n^2 \alpha_k^2}{4\lambda}\norm{\Fnor(z^k)}^2 + \frac{1}{ n \alpha_k} \Exp_k[\norm{e^k}^2] \leq \CL n^2\alpha_k^3 \sigma_k^2.
\end{align*}
Taking the total expectation and applying \cref{lem:var-bound}, this yields 
\begin{equation}\label{eq:lem-3-desent-exp}
    \Exp[\mer(z^{k+1})]\leq \Exp[\mer(z^k)] - \frac{\tau n \alpha_k}{4}\Exp[\norm{\Fnor(z^k)}^2] + 2\sL \CL n^2\alpha_k^3 \Exp[\mer(z^k) - \psilb].
\end{equation}
Mimicking the previous steps, we can infer $\Exp[\mer(z^k) - \psilb]\leq \exp({2\sL \CL n^2{\sum}_{i=1}^{\infty} \alpha_i^3}) [\mer(z^1) -\psilb] $ and $\CL\Exp[\sigma_k^2] \leq \Delta(n^2\sum_{i=1}^\infty\alpha_i^3)$ which finishes the proof. \end{proof}

\section{Iteration Complexity and Global Convergence}\label{sec:comp}
Based on the approximate descent properties of the merit function $\mer$ in
\cref{lem:merit-descent-1}, we now establish the iteration complexity of $\NRR$. Applications to constant and polynomial step sizes are presented in \cref{coro:compexity,coro:compexity-2}

\begin{theorem}[Complexity Bound for $\NRR$]\label{thm:compexity}
	Let the conditions \ref{A1}--\ref{A3} hold and let $\{w^k\}_k$ and $\{z^k\}_k$ be generated by $\NRR$ with  $\lambda\in(0,\frac{1}{4\rho})$ and step sizes $\{\alpha_k\}_k$ satisfying $\alpha_k = \frac{\eta_k}{n}$ and $0<\eta_k \leq  \bar \alpha$. Then, the following statements are valid: 
	\begin{enumerate}[label=\textup{(\alph*)},topsep=0pt,itemsep=0ex,partopsep=0ex]
	\item If $\sum_{k=1}^\infty \eta_k^3 \leq \frac{1}{2\sL \CL }$, then, for all $k\geq 1$, it holds that
\[\min_{k=1,\dots,T} \;\dist(0,\partial \psi(w^k))^2 \leq \frac{4+ 24\sL \CL{\sum}_{k=1}^{T} \eta_k^3}{\tau{\sum}_{k=1}^{T}\eta_k} \cdot [\mer(z^1)-\psilb].\]
	\item In addition, under the sampling condition \ref{S1} and if $\sum_{k=1}^\infty \eta_k^3 \leq \frac{n}{2\sL \CL }$, it holds that  
	\[ \min_{k=1,\dots,T}\; \Exp[\dist(0,\partial \psi(w^k))^2] \leq \frac{4n+ 24\sL \CL {\sum}_{k=1}^{T} \eta_k^3}{\tau n {\sum}_{k=1}^{T}\eta_k} \cdot [\mer(z^1)-\psilb].\]
\end{enumerate}
Here, 
the constants $\CL,\tau,\bar{\alpha} >0$ are defined in \cref{lem:est-err}, \cref{eq:tau}, and \cref{lem:merit-descent-1}, respectively.
\end{theorem}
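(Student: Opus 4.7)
The plan is to telescope the approximate descent inequality established in \cref{lem:merit-descent-1} and then convert the resulting cumulative bound into a bound on $\min_k \dist(0,\partial\psi(w^k))^2$ via a standard weighted-average argument. The crucial bridge between the auxiliary quantity $\|\Fnor(z^k)\|$ (on which the merit function $\mer$ provides descent) and the target stationarity measure is the inclusion $\Fnor(z^k) \in \partial\psi(w^k)$ recorded in \cref{eq:def-normal}, which immediately yields $\dist(0,\partial\psi(w^k))^2 \leq \|\Fnor(z^k)\|^2$ for every $k$. Hence it suffices to control the weighted sum $\sum_{k=1}^T \eta_k \|\Fnor(z^k)\|^2$ (deterministically for (a), in expectation for (b)).

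For part (a), I will substitute $\alpha_k = \eta_k/n$ into \cref{lem:merit-descent-1}(a) so that $n\alpha_k = \eta_k$ and $n^3\alpha_k^3 = \eta_k^3$; discarding the nonpositive term $-\tfrac{1}{8\eta_k}\|w^{k+1}-w^k\|^2$ leaves the one-step estimate $\tfrac{\tau\eta_k}{4}\|\Fnor(z^k)\|^2 \leq \mer(z^k) - \mer(z^{k+1}) + \CL\eta_k^3\sigma_k^2$. The assumption $\sum_{k=1}^\infty \eta_k^3 \leq 1/(2\sL\CL)$ forces $2\sL\CL \cdot n^3\sum\alpha_i^3 \leq 1$, so the uniform variance bound from \cref{lem:merit-descent-1}(a) gives $\CL\sigma_k^2 \leq \Delta(\sum\eta_i^3) \leq 2e\sL\CL[\mer(z^1)-\psilb]$ for every $k$. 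Telescoping from $k=1$ to $T$ and using $\mer(z^{T+1}) \geq \psi(w^{T+1}) \geq \psilb$ yields $\tfrac{\tau}{4}\sum_{k=1}^T\eta_k\|\Fnor(z^k)\|^2 \leq \bigl(1 + 2e\sL\CL\sum_{k=1}^T\eta_k^3\bigr)[\mer(z^1)-\psilb]$. The claim of (a) follows by lower-bounding the left-hand side by $\tfrac{\tau}{4}(\sum\eta_k)\cdot \min_k \|\Fnor(z^k)\|^2$ and using the numerical fact $8e < 24$.

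For part (b), I will repeat this argument starting from the conditional-expectation descent of \cref{lem:merit-descent-1}(b). With $\alpha_k = \eta_k/n$, the variance coefficient reads $\CL n^2\alpha_k^3 = \CL\eta_k^3/n$; the extra $1/n$ factor (the ``variance reduction'' afforded by without-replacement sampling in \ref{S1}) is precisely what relaxes the required summability of $\eta_k^3$ by a factor $n$. Under the hypothesis $\sum\eta_k^3 \leq n/(2\sL\CL)$ we again obtain $2\sL\CL \cdot n^2\sum\alpha_i^3 \leq 1$ and thus $\CL\Exp[\sigma_k^2] \leq 2e\sL\CL[\mer(z^1)-\psilb]$. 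Taking total expectations, telescoping, and applying the same weighted-average step produces (b), with the extra $1/n$ factor surfacing exactly as the $n$ in the denominator of the bound.

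I expect no conceptual obstacle: the substantive content---single-step descent of $\mer$ plus the uniform control on $\sigma_k^2$---has already been absorbed into \cref{lem:merit-descent-1}. The remaining work is bookkeeping: tracking the $n$-factors introduced by the rescaling $\alpha_k = \eta_k/n$, and verifying that the exponential $\exp(2\sL\CL t)$ arising in $\Delta(t)$ can be replaced by the clean constant hidden behind the numerator coefficient $24\sL\CL$, which is valid because $8e < 24$.
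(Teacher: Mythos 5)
Your proposal is correct and follows essentially the same route as the paper: substitute $\alpha_k=\eta_k/n$ into \cref{lem:merit-descent-1}, drop the $\|w^{k+1}-w^k\|^2$ term, bound $\CL\sigma_k^2$ by $\Delta(\cdot)\leq 2e\sL\CL[\mer(z^1)-\psilb]$ using the summability hypothesis, telescope, and pass to $\dist(0,\partial\psi(w^k))\leq\|\Fnor(z^k)\|$. The only cosmetic difference is that the paper rounds $2e\leq 6$ before summing rather than invoking $8e<24$ at the end.
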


\begin{proof}
	Applying \cref{lem:merit-descent-1}~(a) with $\alpha_k = \frac{\eta_k}{n}$ and dropping the term $\|w^{k+1}-w^k\|^2$, we have 
 \begin{equation}
    \label{eq:thm-com2-1}
    \begin{aligned}
        [\mer(z^{k+1}) - \psilb] + \frac{\tau \eta_k}{4}\norm{\Fnor(z^k)}^2 &\leq [\mer(z^k)- \psilb]   + \Delta({\textstyle \sum_{i=1}^\infty} \eta_i^3) \cdot \eta_k^3\\
        & \leq [\mer(z^k) - \psilb]   + 6\sL \CL [\mer(z^1)-\psilb]\eta_k^3,
    \end{aligned}
\end{equation}
where the last line is due to $\Delta({\textstyle \sum_{i=1}^\infty} \eta_i^3) \leq 2\sL \CL \exp(1) [\mer(z^1) -\psilb] \leq 6\sL \CL   [\mer(z^1) -\psilb]$. Summing \cref{eq:thm-com2-1} from $k=1$ to $T$, we obtain
  \[
		\frac{\tau}{4} {\sum}_{k=1}^{T}\eta_k\norm{\Fnor(z^k)}^2 \leq [\mer(z^1) -\psilb] + 6\sL \CL [\mer(z^1)-\psilb]{\sum}_{k=1}^{T} \eta_k^3,
  \]
	which further implies $\min_{k=1,\ldots,T} \norm{\Fnor(z^k)}^2 \leq \frac{4[\mer(z^1)-\psilb]+ 24\sL \CL [\mer(z^1)-\psilb]{\sum}_{k=1}^{T} \eta_k^3}{\tau{\sum}_{k=1}^{T}\eta_k}$. Noticing $\dist(0,\partial \psi(w^k)) \leq \norm{\Fnor(z^k)}$, this completes the proof of part (a).

 To prove (b), we invoke \cref{lem:merit-descent-1}~(b), take total expectation, and set $\alpha_k = \frac{\eta_k}{n}$:
 \begin{align*}
    \Exp[\mer(z^{k+1}) - \psilb] + \frac{\tau \eta_k}{4}\cdot \Exp[\norm{\Fnor(z^k)}^2] &\leq \Exp[\mer(z^k)- \psilb]   + \Delta(n^{-1}{\textstyle \sum_{i=1}^\infty} \eta_i^3) \cdot n^{-1}\eta_k^3\\
    & \leq \Exp[\mer(z^k) - \psilb]   + 6\sL \CL [\mer(z^1)-\psilb]n^{-1}\eta_k^3.
\end{align*}
The rest of the verification is identical to part (a).
  \end{proof}

We now study specific complexity results for $\NRR$ under constant step size schemes. 

\begin{corollary}\label[corollary]{coro:compexity}
Assume \ref{A1}--\ref{A3} and let $\{w^k\}_k$, $\{z^k\}_k$ be generated by $\NRR$ with $\lambda\in(0,\frac{1}{4\rho})$ and $\alpha_k \equiv \frac{\alpha}{n}$ for all $k$.
	\begin{enumerate}[label=\textup{(\alph*)},topsep=0pt,itemsep=0ex,partopsep=0ex]
	\item If $\alpha=\frac{\eta}{T^{1/3}}$ with $0<\eta\leq\min\{(2\sL \CL )^{-\frac13},\bar\alpha T^{\frac13}\}$, then we have
\[
		{\min}_{k=1,\ldots,T} \; \dist(0,\partial \psi(w^k))^2 \leq {16}({\tau \eta T^{2/3}})^{-1}[\mer(z^1) -\psilb]= \cO({{T^{-2/3}}}).
\]	
\item Moreover, if the sampling condition \ref{S1} is satisfied and if $\alpha=\frac{\eta n^{1/3}}{T^{1/3}}$ with $0<\eta\leq\min\{(2\sL \CL )^{-\frac13}, \bar\alpha n^{-\frac13}  T^{\frac13} \}$, it holds that
\[		\min_{k=1,\ldots,T} \;\Exp[\dist(0,\partial \psi(w^k))^2] \leq \frac{16[\mer(z^1) - \psilb]}{\tau \eta n^{1/3}T^{2/3}} 
= \cO({{n^{-1/3}T^{-2/3}}}).
\]	
\end{enumerate}	
\end{corollary}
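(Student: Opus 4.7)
The plan is to obtain both parts of the corollary as direct consequences of \cref{thm:compexity} by plugging in the prescribed constant step sizes and verifying that the hypotheses of that theorem are satisfied. In both cases, the structure is the same: I will set $\eta_k$ to be the given constant, check that $\eta_k \leq \bar\alpha$, check that the cubic summability condition $\sum_{k=1}^T \eta_k^3$ satisfies the required upper bound, and then compute the sums $\sum_{k=1}^T \eta_k$ and $\sum_{k=1}^T \eta_k^3$ explicitly to simplify the right-hand side of the bound in \cref{thm:compexity}.

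For part (a), I would take $\eta_k \equiv \eta/T^{1/3}$, so that $\alpha_k = \eta/(n T^{1/3})$ matches the prescription $\alpha_k = \alpha/n$ with $\alpha = \eta/T^{1/3}$. The condition $\eta \leq \bar\alpha T^{1/3}$ yields $\eta_k \leq \bar\alpha$, and the condition $\eta \leq (2\sL \CL)^{-1/3}$ yields $\sum_{k=1}^T \eta_k^3 = \eta^3 \leq (2\sL \CL)^{-1}$, so the hypotheses of \cref{thm:compexity}(a) are in force. Substituting $\sum_{k=1}^T \eta_k = \eta T^{2/3}$ and bounding $24 \sL \CL \sum_{k=1}^T \eta_k^3 \leq 24 \sL \CL \cdot (2\sL \CL)^{-1} = 12$ gives numerator at most $4 + 12 = 16$, and the stated bound follows immediately after using $\dist(0,\partial\psi(w^k)) \leq \|\Fnor(z^k)\|$ which is already built into \cref{thm:compexity}.

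For part (b), the argument is essentially identical but with $\eta_k \equiv \eta n^{1/3}/T^{1/3}$. Now the check $\eta_k \leq \bar\alpha$ becomes $\eta \leq \bar\alpha n^{-1/3} T^{1/3}$, and $\sum_{k=1}^T \eta_k^3 = \eta^3 n \leq n/(2\sL \CL)$, which is exactly the summability condition needed in \cref{thm:compexity}(b). Plugging $\sum_{k=1}^T \eta_k = \eta n^{1/3} T^{2/3}$ into the theorem's bound and using $24 \sL \CL \sum_{k=1}^T \eta_k^3 \leq 12 n$ yields a numerator bounded by $4n + 12n = 16n$; the $n$ in the numerator cancels against one of the factors of $n$ in the denominator $\tau n \sum_k \eta_k$, leaving the advertised $n^{-1/3} T^{-2/3}$ rate.

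There is essentially no serious obstacle in this corollary — all heavy lifting has already been done in \cref{thm:compexity} and in the descent lemma \cref{lem:merit-descent-1}. The only place to be slightly careful is in verifying that the two admissibility constraints on $\eta$ (namely $\eta \leq (2\sL \CL)^{-1/3}$ and the upper bound involving $\bar\alpha$) together correctly reproduce both the scaling of $\alpha$ with $T^{-1/3}$ (resp.\ $n^{1/3} T^{-1/3}$) and the final constant $16$ appearing in the numerator. Once these are tracked through, parts (a) and (b) are one-line specializations of \cref{thm:compexity}.
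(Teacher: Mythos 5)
Your proposal is correct and follows essentially the same route as the paper: both substitute the constant choices $\eta_k=\eta/T^{1/3}$ (resp. $\eta_k=\eta n^{1/3}/T^{1/3}$) into \cref{thm:compexity}, use $\eta^3\leq(2\sL\CL)^{-1}$ to bound the numerator by $16$ (resp. $16n$), and read off the rate. The only cosmetic difference is that the paper explicitly sets $\eta_k=0$ for $k>T$ so that the theorem's hypothesis on $\sum_{k=1}^{\infty}\eta_k^3$ is literally met, whereas you only verify the finite sum; this is worth stating but does not affect the argument.
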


\begin{proof}
	We substitute $\eta_k =\eta/T^{1/3}$ for all $k\leq T$ and $\eta_k= 0$ for all $k>T$ in \cref{thm:compexity}~(a). Observing $\eta^3 \leq \frac{1}{2\sL \CL }$, this yields
 \[
		\min_{k=1,\ldots,T} \dist(0,\partial \psi(w^k))^2 \leq \frac{4 + 24\eta^3 \sL \CL }{\tau \eta T^{2/3}} \cdot [\mer(z^1) -\psilb] \leq \frac{16[\mer(z^1) -\psilb]}{\tau \eta T^{2/3}}.
\]	
Similarly, setting $\eta_k =\eta n^{1/3}/T^{1/3}$ when $k\leq T$ and $\eta_k = 0$ when $k>T$ in \cref{thm:compexity}~(b) allows completing the proof of part (b).
\end{proof}
\begin{remark}
In the smooth setting, it holds that $\dist(0,\partial \psi(w^k)) = \|\nabla f(w^k)\|$ and the results shown in \cref{thm:compexity} and \cref{coro:compexity} match the existing (nonconvex) complexity bounds presented in \cite[Theorem 4]{mishchenko2020random} and \cite[Theorem 3 \& Corollary 2]{nguyen2021unified}. 
    The complexity result in \cref{coro:compexity}~{(b)} improves the best bound for $\EPRR$ in \cite[Theorem 3]{mishchenko2022proximal}, in terms of gradient evaluations, by a factor of $n^{1/3}$. Moreover, our results do not require a link between the natural residual $\mathcal G_\lambda$ and the gradient mapping $\nabla f$ as assumed in \cite[Assumption 2 \& Theorem 3]{mishchenko2022proximal}. 
\end{remark}

\begin{remark} \label[remark]{remark:table} Let $\varepsilon > 0$ be given and let us set $\rho = 0$ (i.e., $\vp$ is convex) and $\lambda = \frac{1}{\sL}$. We then obtain $\CL = 4[3\sL+2\lambda^{-1}]^2 = 100\sL^2$, $\tau = \frac14$, and $\bar\alpha = \frac{1}{1600\sL}$. Let us consider the constant step size $\alpha_k \equiv \alpha = \frac{1}{\sL}\min\{\frac{1}{1600 n},\frac{1}{(200T)^{1/3}n^{2/3}}\}$. Thus, following our previous derivations, \cref{thm:compexity} (b) is applicable and we can infer
\begin{align*} 
\min_{k=1,\ldots,T} \;\Exp[\dist(0,\partial \psi(w^k))^2] & \leq \left[\frac{1}{Tn\alpha}+600\sL^3n\alpha^2\right] \cdot 16(\mer(z^1)-\psilb) \\ & \hspace{-16ex} \leq \left[\sL\max\Big\{\frac{1600}{T},\frac{{200}^{1/3}}{T^{2/3}n^{1/3}}\Big\}+\frac{600\sL}{(200T)^{2/3}n^{1/3}}\right] \cdot 16(\mer(z^1)-\psilb) = \cO(\varepsilon^2), 
\end{align*}
provided that the total number of gradient evaluations satisfies $Tn \geq \frac{10\sL\sqrt{n}}{\varepsilon^2}\max\{160\sqrt{n},\frac{\sqrt{2\sL}}{\varepsilon}\}$. This recovers the result shown in \cref{table:super-nice} and confirms that $\RR$ and $\NRR$ have similar complexities. Hence, the comparisons between the complexities of $\RR$ and $\SGD$ in \citep{mishchenko2020random,nguyen2021unified} can also be transferred to $\NRR$ and $\PSGD$.
\end{remark}

Next, we discuss the corresponding complexity results for polynomial step sizes $\alpha_k \sim k^{-\gamma}$, $\gamma\in(0,1)$. Step sizes of this form are common and popular in stochastic optimization, see, e.g., \citep{robbins1951stochastic,chung1954stochastic,bottou2018optimization,nguyen2021unified}. 

\begin{corollary}\label[corollary]{coro:compexity-2}
	Assume \ref{A1}--\ref{A3} and let the sequences $\{w^k\}_k$ and $\{z^k\}_k$ be generated by $\NRR$ with $\lambda\in(0,\frac{1}{4\rho})$. 
\begin{enumerate}[label=\textup{(\alph*)},topsep=0pt,itemsep=0ex,partopsep=0ex]
	\item If $\alpha_k = \frac{\alpha}{nk^\gamma}$ with $\gamma\in(\frac13,1)$ and
	$0<\alpha \leq \min\{\bar \alpha, (\frac{3\gamma-1}{6\sL \CL})^{\frac13}\}$,
	 then  
\[\min_{k=1,\dots,T} \;\dist(0,\partial \psi(w^k))^2 \leq \frac{16[\mer(z^1)-\psilb]}{\tau\alpha(T^{1-\gamma} - 1)}=\cO({T^{-(1-\gamma)}}).\]
\item Under the sampling condition \ref{S1} and if $\alpha_k = \frac{\alpha}{n^{2/3}k^\gamma}$ with $\gamma\in(\frac13,1)$ and
	$0<\alpha\leq \min\{\bar \alpha n^{-\frac13},(\frac{3\gamma-1}{6\sL \CL})^{\frac13}\}$, then it holds that
\[		\min_{k=1,\dots,T}\;\Exp[\dist(0,\partial \psi(w^k))^2] \leq \frac{16[\mer(z^1) - \psilb]}{n^{1/3} \tau \alpha(T^{1-\gamma}-1)}=\cO({n^{-1/3} T^{-(1-\gamma)}}).
\]	
\end{enumerate}	
\end{corollary}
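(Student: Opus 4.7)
My plan is to reduce both parts of the corollary directly to \cref{thm:compexity} by reading off the corresponding $\eta_k$, verifying the admissibility conditions, and then bounding the sums $\sum_{k=1}^{T} \eta_k^3$ from above and $\sum_{k=1}^{T} \eta_k$ from below via elementary integral comparison. For part (a), the step size $\alpha_k = \alpha/(n k^\gamma)$ corresponds to $\eta_k = \alpha/k^\gamma$ in the normalization $\alpha_k = \eta_k/n$ used in \cref{thm:compexity}, and $\eta_k \leq \alpha \leq \bar\alpha$ is immediate. For part (b), I would set $\eta_k = \alpha n^{1/3}/k^\gamma$, and the constraint $\alpha \leq \bar\alpha n^{-1/3}$ then gives $\eta_k \leq \bar\alpha$.

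The core estimate I would use is the integral bound
\[
{\sum}_{k=1}^{\infty} k^{-3\gamma} \leq 1 + {\int}_{1}^{\infty} x^{-3\gamma}\,\mathrm{d}x = 1 + \frac{1}{3\gamma-1} = \frac{3\gamma}{3\gamma-1},
\]
which is finite precisely because $\gamma > \tfrac13$. Combining this with the bound $\alpha^3 \leq \frac{3\gamma-1}{6\sL\CL}$ from the hypothesis yields, for part (a),
\[
{\sum}_{k=1}^{\infty} \eta_k^3 = \alpha^3 {\sum}_{k=1}^{\infty} k^{-3\gamma} \leq \frac{3\gamma-1}{6\sL\CL} \cdot \frac{3\gamma}{3\gamma-1} = \frac{\gamma}{2\sL\CL} \leq \frac{1}{2\sL\CL},
\]
so the hypothesis $\sum_{k=1}^\infty \eta_k^3 \leq (2\sL\CL)^{-1}$ of \cref{thm:compexity}~(a) is met. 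An identical computation with the extra factor of $n$ gives $\sum_{k=1}^{\infty} \eta_k^3 \leq n/(2\sL\CL)$ for part (b), matching the admissibility condition of \cref{thm:compexity}~(b).

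For the denominator, I would use the lower bound
\[
{\sum}_{k=1}^{T} k^{-\gamma} \geq {\int}_{1}^{T} x^{-\gamma}\,\mathrm{d}x = \frac{T^{1-\gamma}-1}{1-\gamma} \geq T^{1-\gamma}-1,
\]
which is valid for $\gamma \in (0,1)$. Plugging these sum estimates into \cref{thm:compexity}~(a) gives a numerator of at most $4 + 24\sL\CL \cdot (2\sL\CL)^{-1} = 16$ and a denominator of at least $\tau\alpha(T^{1-\gamma}-1)$, producing the stated bound. Part~(b) follows by the same arithmetic after accounting for the extra $n^{1/3}$ in $\eta_k$: the numerator becomes $16n$ and the denominator $\tau n \cdot \alpha n^{1/3}(T^{1-\gamma}-1)$, so the $n$ cancels and leaves the advertised $n^{-1/3}$ improvement.

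There is no genuine obstacle here: the proof is a direct corollary of \cref{thm:compexity}. The only slightly delicate point is the tight interplay between the upper bound imposed on $\alpha$ (the factor $3\gamma-1$) and the divergence of $\sum k^{-3\gamma}$ as $\gamma \downarrow \tfrac13$; these two effects are designed to cancel, which is why $\alpha \leq ((3\gamma-1)/(6\sL\CL))^{1/3}$ is exactly the right calibration to keep the variance-type term $\sum \eta_k^3$ controlled uniformly in $\gamma$.
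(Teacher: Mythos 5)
Your proof is correct and follows exactly the route the paper indicates: the authors omit the derivation, remarking only that it is ``a routine application of the integral test and \cref{thm:compexity},'' and your argument supplies precisely that routine application with the arithmetic checking out in both parts (the numerator bound $4+24\sL\CL\cdot(2\sL\CL)^{-1}=16$, the denominator lower bound via $\sum_{k=1}^T k^{-\gamma}\geq T^{1-\gamma}-1$, and the cancellation of the $3\gamma-1$ factors).
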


As the proof is a routine application of the integral test and \cref{thm:compexity}, we will omit an explicit derivation here.

The complexity bounds presented in \cref{thm:compexity} and \cref{coro:compexity,coro:compexity-2} do not directly imply convergence of the stationarity measure $\dist(0,\partial \psi(w^k)) \to 0$, i.e., accumulation points of $\{w^k\}_k$ are not automatically stationary points of the problem \cref{SO}. This subtle technicality is caused by the presence of the $\min$-operation, i.e., the results in \cref{thm:compexity} and \cref{coro:compexity,coro:compexity-2} do not apply to the last iterate $w^T$. In the following, we close this gap and establish global convergence of $\NRR$ under suitable diminishing step size schemes.

\begin{theorem}[Global Convergence of $\NRR$]
	\label{thm:global_convergence}
	Let \ref{A1}--\ref{A3} hold and let $\{w^k\}_k$, $\{z^k\}_k$ be generated by $\NRR$ with  $\lambda\in(0,\frac{1}{4\rho})$ and step sizes $\{\alpha_k\}_k$ satisfying 
	\begin{equation}
		\label{eq:ass-step}
		0<\alpha_k\leq \frac{\bar\alpha}{n},\quad{\sum}_{k=1}^{\infty}\alpha_k=\infty,\quad\text{and}\quad {\sum}_{k=1}^{\infty}\alpha_k^3<\infty.
	\end{equation} 
	Then, $\Fnor(z^k)\to0$, $\dist(0,\partial \psi(w^k))\to0$, and $\psi(w^k)\to \bar \psi\in\R$ as $k$ tends to infinity.
\end{theorem}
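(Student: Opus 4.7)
The plan is to use the approximate descent inequality in \cref{lem:merit-descent-1}~(a) as the engine, and then upgrade a $\liminf$-type conclusion to a full limit via Lipschitz continuity of $\Fnor$ and a classical escape-return window argument. \emph{Step 1 (summability and merit convergence).} Since $\sum_k \alpha_k^3 < \infty$, \cref{lem:merit-descent-1}~(a) gives the uniform bound $\CL\sigma_k^2 \leq D$ with $D := \Delta(n^3 \sum_{i=1}^\infty \alpha_i^3)$, so
\[
  \mer(z^{k+1}) \leq \mer(z^k) - \tfrac{\tau n \alpha_k}{4}\|\Fnor(z^k)\|^2 - \tfrac{1}{8n\alpha_k}\|w^{k+1}-w^k\|^2 + D n^3\alpha_k^3.
\]
Telescoping from $1$ to $T$, using $\mer(z^k) \geq \psilb$, and letting $T\to\infty$, I obtain $\sum_k \alpha_k \|\Fnor(z^k)\|^2 < \infty$ and $\sum_k \alpha_k^{-1}\|w^{k+1}-w^k\|^2 < \infty$. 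Moreover, the shifted sequence $\mer(z^k) - \psilb + D n^3 \sum_{i=k}^\infty \alpha_i^3$ is nonnegative and nonincreasing, hence $\mer(z^k)$ converges to some $\mer^* \in \R$.

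\emph{Step 2 (from $\liminf$ to $\lim$, main obstacle).} Combining $\sum_k \alpha_k\|\Fnor(z^k)\|^2 < \infty$ with $\sum_k \alpha_k = \infty$ immediately gives $\liminf_{k} \|\Fnor(z^k)\|=0$. The hard part is upgrading this to $\lim_{k} \|\Fnor(z^k)\| = 0$. I plan to use global Lipschitz continuity of $\Fnor$: by \eqref{eq:prox-coco} the operator $\proxl$ is $(1-\lambda\rho)^{-1}$-Lipschitz, while $\nabla f$ is $\sL$-Lipschitz on $\dom{\vp}$ by \ref{A1}, so \cref{eq:def-normal} yields a global Lipschitz constant $L_F$ for $\Fnor$. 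From $z^{k+1}-z^k = -n\alpha_k\Fnor(z^k) + e^k$, \cref{lem:est-err}~(a), and the uniform bound $\sigma_k^2 \leq D/\CL$,
\[
  \|z^{k+1}-z^k\| \leq n\alpha_k \|\Fnor(z^k)\| + \sqrt{\CL}\,n^2\alpha_k^2\bigl(\|\Fnor(z^k)\|+\sqrt{D/\CL}\bigr).
\]
Suppose, for contradiction, that $\limsup_k \|\Fnor(z^k)\| =: 2\eta > 0$. Using the established $\liminf$ and $\limsup$, I extract infinitely many disjoint windows $[m_j,l_j)$ with $\|\Fnor(z^{m_j})\| \geq 2\eta$, $\|\Fnor(z^{l_j})\| \leq \eta$, and $\|\Fnor(z^k)\| > \eta$ for all $k \in [m_j,l_j)$. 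On each window $\|\Fnor(z^k)\|$ and $\sigma_k$ are uniformly bounded and $n\alpha_k \leq \bar\alpha$, so the previous display reduces to $\|z^{k+1}-z^k\| \leq C\alpha_k$ for an absolute constant $C$. Lipschitzness of $\Fnor$ then yields
\[
  \eta \;\leq\; \bigl|\|\Fnor(z^{m_j})\|-\|\Fnor(z^{l_j})\|\bigr| \;\leq\; L_F \sum_{k=m_j}^{l_j-1}\|z^{k+1}-z^k\| \;\leq\; L_F C \sum_{k=m_j}^{l_j-1}\alpha_k,
\]
so $\sum_{k=m_j}^{l_j-1}\alpha_k\|\Fnor(z^k)\|^2 \geq \eta^3/(L_F C)$. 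Infinitely many such disjoint windows then contradict the summability $\sum_k \alpha_k\|\Fnor(z^k)\|^2 < \infty$, forcing $\Fnor(z^k)\to 0$.

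\emph{Step 3 (wrap-up).} Applying \eqref{eq:stat} gives $\dist(0,\partial\psi(w^k)) \leq \|\Fnor(z^k)\| \to 0$. Finally, since $\mer(z^k) = \psi(w^k) + \tfrac{\tau\lambda}{2}\|\Fnor(z^k)\|^2$ by \cref{def:mer-fun} and the quadratic term vanishes, $\psi(w^k) \to \bar\psi := \mer^*$, which finishes the proof. The technical heart of the argument is clearly Step~2; delicate points will be choosing the window construction so that $\|\Fnor(z^k)\|$ stays bounded on each $[m_j,l_j)$ (otherwise the constant $C$ in $\|z^{k+1}-z^k\| \leq C\alpha_k$ would blow up) and verifying that the bound $\sigma_k^2 \leq D/\CL$ really is independent of $k$, both of which are handled by \cref{lem:merit-descent-1}~(a).
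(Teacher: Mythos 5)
Your proposal is correct and follows the same overall architecture as the paper's proof: approximate descent of $\mer$ from \cref{lem:merit-descent-1}~(a), summability of $\alpha_k\|\Fnor(z^k)\|^2$, the resulting $\liminf$, and an escape--return window contradiction driven by the Lipschitz continuity of $\Fnor$ in $z$. The only substantive difference is in how the contradiction is closed. The paper first deduces that the window widths $\beta_j=\sum_{k=t_j}^{\ell_j-1}\alpha_k$ must tend to zero (from summability and the lower bound $\|\Fnor(z^k)\|\geq\varepsilon$ on the window), then shows $\|z^{\ell_j}-z^{t_j}\|\to0$ via Cauchy--Schwarz together with the finiteness of $\sum_k\alpha_k^{-1}\|z^{k+1}-z^k\|^2$, contradicting $\varepsilon\leq L_F\|z^{\ell_j}-z^{t_j}\|$. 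You instead lower-bound the window widths by $\eta/(L_FC)$ using the per-step estimate $\|z^{k+1}-z^k\|\leq C\alpha_k$ and contradict summability directly; this is slightly more economical since it avoids the Cauchy--Schwarz step, but it requires the uniform upper bound on $\|\Fnor(z^k)\|$, which you should state explicitly: it follows from $\frac{\tau\lambda}{2}\|\Fnor(z^k)\|^2=\mer(z^k)-\psi(w^k)\leq\sup_j\mer(z^j)-\psilb<\infty$, using the convergence of $\{\mer(z^k)\}_k$ from your Step~1 and \ref{A3}, rather than from \cref{lem:merit-descent-1}~(a) alone. With that point made precise, both routes are complete; the remaining steps (monotone shifted sequence in place of the supermartingale theorem, and the wrap-up via \cref{eq:stat} and the definition of $\mer$) match the paper.
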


\cref{thm:global_convergence} ensures that accumulation points of the iterates $\{w^k\}_k$ are stationary points of the objective function $\psi$---independent of the realization of the permutations $\{\pi^k\}_k$. 

\begin{mdframed}[style=proofbox]
\textbf{Proof sketch.} By the approximate descent property in \cref{lem:merit-descent-1}, we first show that the merit function values $\{\mer(z^k)\}_k$ converge and we have ${\sum}_{k=1}^\infty \,\alpha_k \norm{\Fnor(z^k)}^2<\infty$. In the next step, invoking \ref{A1} and ${\sum}_{k=1}^\infty \, \alpha_k = \infty$, we verify $\lim_{k\to\infty}\|\Fnor(z^k)\|\to 0$. The convergence of $\{\psi(w^k)\}_k$ then follows from the convergence of the sequences $\{\mer(z^k)\}_k$ and $\{\|\Fnor(z^k)\|\}_k$. A full proof of \cref{thm:global_convergence} can be found in \cref{proof:global_convergence}. 
\end{mdframed}


\section{Convergence under the Polyak-{\L}ojasiewicz (PL) Condition} \label{sec:pl}

We now study convergence of $\NRR$ under the well-known Polyak-{\L}ojasiewicz (PL) condition \citep{polyak1963,lojasiewicz1959,lojasiewicz1963,karimi2016}. 
\begin{definition}[PL Condition]
	\label{def:PL-condition}
	The function $\psi$ is said to satisfy the PL condition if there is $\mu>0$
	such that the following inequality holds:
	\begin{equation} \label{eq:pl-cond} \dist(0,\partial\psi(w))^2 \geq 2\mu[\psi(w)-\psi^*]\quad \text{where} \quad \psi^*:=\inf_{w\in\Rn} \psi(w).\end{equation} 
\end{definition}

The PL condition is a common tool in stochastic optimization; it provides a measure of the ``strong convexity-like'' behavior of the objective function without requiring convexity of $\psi$. Furthermore, under the PL condition, every stationary point $w^* \in \crit{\psi}$ is necessarily an optimal solution to the problem \cref{SO}. To proceed, we introduce the set of optimal solutions and the variance at optimal points:
\be \label{eq:opt-var} {\cal W}:=\{w\in\Rn:\psi(w) = \psi^*\}\quad \text{and}\quad \sigma_*^2:=\sup_{w\in{\cal W}}\,\Big[\frac1n{\sum}_{j=1}^{n}\norm{\nabla f(w,j) - \nabla f(w) }^2\Big].\ee

In the following, under the PL condition and using constant step sizes $\alpha_k \sim \alpha$, we show that $\NRR$ converges linearly to an $\cO(\alpha^2 \sigma_*^2)$-neighborhood of ${\cal W}$.

\begin{theorem}\label{thm:compexity-strong}
	Let \ref{A1}--\ref{A2} hold and assume that $\psi$ satisfies the PL condition with $\mathcal W \neq \emptyset$. Let $\{w^k\}_k$, $\{z^k\}_k$ be generated by $\NRR$ with $\lambda\in(0,\frac{1}{4\rho})$ and 
			\[ \alpha_k\equiv \frac{\alpha}{n}, \quad \alpha \leq \min\left\{ \bar \alpha, \; \nu^{-1}, \; \frac{\mu\sqrt{\tau}}{2\sL \sqrt{6\CL}} \right\}, \quad \nu := \frac{\mu\tau}{3(1+\mu\tau\lambda)}. \]
   \begin{enumerate}[label=\textup{(\alph*)},topsep=0pt,itemsep=0ex,partopsep=0ex]
   \item For all $T\geq 1$, it follows $\psi(w^{T+1})-\psi^* \leq \exp(- T\nu\alpha)\cdot\frac{\mer(z^1) - \psi^*}{1+\mu\tau\lambda} + \frac{6\CL \alpha^2 \sigma_*^2}{\mu\tau}$.
   %
 %
  \item In addition, under the sampling condition \ref{S1}, we have
   \[\Exp[\psi(w^{T+1})-\psi^*] \leq \exp(-T \nu\alpha) \cdot \frac{\mer(z^1) - \psi^*}{1+\mu\tau\lambda} + \frac{6\CL\alpha^2 \sigma_*^2}{n\mu\tau}, \quad \forall~T\geq1.\]
   \end{enumerate}
	Here, $\psi^* := \inf_{w}\psi(w)$, $\CL>0$ and $\bar\alpha$ are defined in \cref{lem:est-err,lem:merit-descent-1}, and $\tau$ is given in \cref{eq:tau}.
\end{theorem}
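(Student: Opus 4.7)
The plan is to derive a linear recursion
\[ \mer(z^{k+1}) - \psi^* \leq (1-\nu\alpha)\,[\mer(z^k)-\psi^*] + 2\CL\alpha^3\sigma_*^2 \]
on the merit function, iterate it, and convert the terminal bound back to $\psi(w^{T+1})-\psi^*$ by a second use of PL. That conversion is what supplies the factor $1/(1+\mu\tau\lambda)$ in the statement: since $\Fnor(z)\in\partial\psi(w)$ with $w = \prox{\lambda\vp}(z)$, condition \cref{eq:pl-cond} gives $\|\Fnor(z)\|^2 \geq 2\mu[\psi(w)-\psi^*]$, so
\[ \mer(z)-\psi^* = [\psi(w)-\psi^*] + \tfrac{\tau\lambda}{2}\|\Fnor(z)\|^2 \geq (1+\mu\tau\lambda)\,[\psi(w)-\psi^*], \]
and simultaneously $\|\Fnor(z)\|^2 \geq \tfrac{2\mu}{1+\mu\tau\lambda}[\mer(z)-\psi^*]$. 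The second inequality will drive the contraction, and the first will be used at the very end to pass from $\mer$ back to $\psi$.

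To build the recursion I would start from \cref{lem:merit-descent-1}(a), specialized to $\alpha_k\equiv \alpha/n$, which reads $\mer(z^{k+1}) \leq \mer(z^k) - \tfrac{\tau\alpha}{4}\|\Fnor(z^k)\|^2 + \CL\alpha^3\sigma_k^2$. The one delicate new ingredient is a variance estimate that references $\psi^*$ (and hence $\sigma_*^2$) rather than $\psilb$: \cref{lem:var-bound} is too crude here, as its right-hand side $\psi(w^k)-\psilb$ stays bounded away from zero at an optimum and would prevent linear convergence. Instead, centering each summand of $\sigma_k^2$ at $w^* := P_\mathcal{W}(w^k)$ and combining the variance-$\leq$-second-moment inequality with the $\sL$-Lipschitzness of each $\nabla f(\cdot,i)$ gives $\sigma_k^2 \leq 2\sigma_*^2 + 2\sL^2\|w^k-w^*\|^2$; converting $\|w^k-w^*\|^2$ via the quadratic growth implied by PL and bounding $\psi(w^k)-\psi^*\leq \mer(z^k)-\psi^*$ upgrades this to $\sigma_k^2 \leq 2\sigma_*^2 + \tfrac{4\sL^2}{\mu}[\mer(z^k)-\psi^*]$. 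Plugging both this and the PL lower bound on $\|\Fnor(z^k)\|^2$ into the descent inequality yields
\[ \mer(z^{k+1})-\psi^* \leq \Big[1 - \tfrac{\mu\tau\alpha}{2(1+\mu\tau\lambda)} + \tfrac{4\CL\sL^2\alpha^3}{\mu}\Big][\mer(z^k)-\psi^*] + 2\CL\alpha^3\sigma_*^2, \]
and the stated step size bound $\alpha \leq \mu\sqrt{\tau}/(2\sL\sqrt{6\CL})$ is exactly what is needed so that the cubic drift term is absorbed into one third of the linear-in-$\alpha$ contraction; the bracket is then $\leq 1-\nu\alpha$ with $\nu = \mu\tau/[3(1+\mu\tau\lambda)]$.

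Iterating $T$ times, using $(1-\nu\alpha)^T \leq \exp(-T\nu\alpha)$ (legitimate since $\alpha\leq \nu^{-1}$) and summing the geometric series, gives $\mer(z^{T+1})-\psi^* \leq \exp(-T\nu\alpha)[\mer(z^1)-\psi^*] + 2\CL\alpha^2\sigma_*^2/\nu$. Dividing through by $(1+\mu\tau\lambda)$ via the first display and using $2/[\nu(1+\mu\tau\lambda)] = 6/(\mu\tau)$ produces part (a). Part (b) is the same argument starting from \cref{lem:merit-descent-1}(b); its sampling-dependent variance coefficient is smaller by a factor $1/n$, which passes cleanly through the variance bound and the geometric sum to yield the $1/n$ improvement of the noise floor. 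The main technical obstacle is the sharpened variance estimate and, specifically, justifying the PL-to-quadratic-growth step in the nonsmooth composite setting with a clean constant; once this is in hand, the rest is routine constant tracking.
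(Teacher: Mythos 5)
Your proposal follows essentially the same route as the paper's proof: the same PL-based inequalities relating $\mer$ and $\psi$ (supplying the $1/(1+\mu\tau\lambda)$ factor), the same variance decomposition centered at $\mathrm{proj}_{\mathcal W}(w^k)$ combined with the PL-implied quadratic growth, and the same geometric unfolding and final conversion. The only slip is that in the variance step you bound $\psi(w^k)-\psi^*\leq \mer(z^k)-\psi^*$ rather than the sharper $\psi(w^k)-\psi^*\leq \frac{1}{1+\mu\tau\lambda}[\mer(z^k)-\psi^*]$ from your own first display; with the loose version the restriction $\alpha\leq \mu\sqrt{\tau}/(2\sL\sqrt{6\CL})$ misses absorbing the cubic drift into one third of the contraction by a factor of $(1+\mu\tau\lambda)$, so you should carry the sharper constant (as the paper does) for the bookkeeping to close exactly.
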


\begin{remark} \label[remark]{remark:pl-1} It is possible to express the convergence results shown in \cref{thm:compexity-strong} in terms of the distance, $\dist(w,\mathcal W)$, to the set $\mathcal W$. Indeed, by \cite[Theorem 5 and 27]{bolte2017error} or \cite[Theorem 2]{karimi2016}, the PL condition \cref{eq:pl-cond} implies the following error bound or quadratic growth condition:
\be \dist(w,\mathcal W)^2 \leq 2\mu^{-1}[\psi(w)-\psi^*] \quad \forall~w \in \Rn. \label{eq:error-bound} \ee
If ${\cal W}=\{w^*\}$ is a singleton (which is the case, e.g., if $\psi$ is strongly convex), then \cref{thm:compexity-strong} ensures linear convergence of the iterates $\{w^k\}_k$ to an $\mathcal O(\alpha^2\sigma_*^2)$-neighborhood of the optimal solution $w^*$. Hence, in the interpolation setting $\nabla f(w^*,1) = \dots = \nabla f(w^*,n)$, the iterates $\{w^k\}_k$ generated by $\NRR$ converge linearly to $w^*$.  
\end{remark}

\begin{remark}[Comparison with $\EPRR$] \label[remark]{remark:pl-2}
    As shown in \cite[Theorem 2]{mishchenko2022proximal}, 
    applying $\EPRR$ with constant step size in the strongly convex case yields linear convergence to an $\cO(\alpha^2 \sigma_{\text{rad}}^2 )$-neighborhood of the optimal solution $w^*$. Here, the shuffling radius $\sigma_{\text{rad}}^2$ is bounded by $\cO(\sigma_*^2) + \cO(\|\nabla f(w^*)\|^2)$. Hence, even if $f(\cdot,1) = \cdots = f(\cdot,n)$ (indicating $\sigma_*^2=0$), linear convergence of $\EPRR$ can not be guaranteed since $\nabla f(w^*)$ generally does not vanish in composite problems \cref{SO}. We illustrate this effect numerically in \cref{sec:toy-interpolation}.
\end{remark}

\noindent\textbf{Proof of \cref{thm:compexity-strong}\ }
Recalling $\Fnor(z^k)\in \partial \psi(w^k)$, the PL condition implies $\|\Fnor(z^k)\|^2 \geq 2\mu(\psi(w^k)-\psi^*)$. In addition, using the definition of the merit function, $\mer(z)=\psi(\proxi{\lambda}(z)) + \frac{\tau\lambda}{2}\|\Fnor(z)\|^2$, we can infer
\begin{equation}\label{eq:com-s-cvx-1} \psi(w^k)-\psi^* \leq \frac{1}{1+\tau\lambda\mu}[\mer(z^k)-\psi^*] \quad \text{and} \quad \norm{\Fnor(z^k)}^2 \geq \frac{2\mu}{1+\mu\tau\lambda}[\mer(z^k)-\psi^*]. \end{equation}
%
%
Thus, applying \cref{lem:merit-descent-1}~(a) with \cref{eq:com-s-cvx-1} and subtracting $\psi^*$, it follows 
\begin{equation}
	\label{eq:com-s-cvx-2}
	\begin{aligned}
		\mer(z^{k+1}) - \psi^* &\leq   \left[1 - \frac{\mu \tau n \alpha_k}{2(1+\mu\tau\lambda)}\right] [\mer(z^k) - \psi^*] + \CL n^3\alpha_k^3 \sigma_k^2.
	\end{aligned}
\end{equation}
We now bound the variance $\sigma_k^2= \frac{1}{n}{\sum}_{i=1}^{n}\norm{\nabla f(w^k,i) - \nabla f(w^k) }^2$ in terms of $\sigma_*^2$, cf. \cref{eq:opt-var}. Setting $w^*_k:=\mathrm{proj}_{\cal W}(w^k)$ and using Young's inequality, \ref{A1}, \cref{eq:error-bound}, and \cref{eq:com-s-cvx-1}, we have 
\begingroup
\allowdisplaybreaks
\begin{align*}
\sigma_k^2 &= \frac{1}{n}{\sum}_{i=1}^{n}\norm{\nabla f(w^k,i)-\nabla f(w^*_k,i) + \nabla f(w^*_k,i)-\nabla f(w^*_k) + \nabla f(w^*_k)  - \nabla f(w^k) }^2 \\ &= \frac{1}{n}{\sum}_{i=1}^{n} \Big[\norm{\nabla f(w^k,i)-\nabla f(w^*_k,i)}^2 + 2\iprod{\nabla f(w^k,i)-\nabla f(w^*_k,i)}{\nabla f(w^*_k,i)-\nabla f(w^*_k)} \\ & \hspace{8ex} + 2\iprod{\nabla f(w^k,i)-\nabla f(w^*_k,i)}{\nabla f(w^*_k)  - \nabla f(w^k)} + \|\nabla f(w^*_k,i)-\nabla f(w^*_k)\|^2 \\ & \hspace{8ex} + 2\iprod{\nabla f(w^*_k,i)-\nabla f(w^*_k)}{\nabla f(w^*_k)  - \nabla f(w^k)} + \|\nabla f(w^*_k)  - \nabla f(w^k)\|^2 \Big] \\ & \leq 2\sL^2 \|w^k-w^*_k\|^2 + 2\sigma_*^2 \leq 4\sL^2\mu^{-1}[\psi(w^k)-\psi^*] + 2\sigma_*^2 \leq \tfrac{4\sL^2\mu^{-1}}{1+\tau\lambda\mu}[\mer(z^k)-\psi^*] + 2\sigma_*^2.
\label{eq:sigma-bound*}
\end{align*}
\endgroup
Hence, we obtain
\begin{equation}
	\label{eq:com-s-cvx-3}
	\begin{aligned}
		\mer(z^{k+1}) - \psi^* &\leq   \left[1 - \tfrac{\mu \tau n \alpha_k}{2(1+\mu\tau\lambda)} + \tfrac{4\CL \sL^2 n^3 \alpha_k^3}{\mu(1+\mu\tau\lambda)}\right] [\mer(z^k) - \psi^*]  + 2\CL n^3\alpha_k^3 \sigma_*^2\\
  &\leq \left[1 - \tfrac{\mu \tau n \alpha_k}{3(1+\mu\tau\lambda)}\right] [\mer(z^k) - \psi^*]  + 2\CL n^3\alpha_k^3 \sigma_*^2,
	\end{aligned}
\end{equation}
where we applied $\alpha_k^2 \leq \frac{\mu^2\tau}{24\CL\sL^2n^2}$. Fixing $\alpha_k = \frac{\alpha}{n}$ and unfolding the recursion \cref{eq:com-s-cvx-3}, it follows	
	\begin{align}
			\nonumber \mer(z^{T+1}) - \psi^* &\leq [1 - \nu\alpha]^T (\mer(z^1) - \psi^*)   + 2\CL  \alpha^3 \sigma_*^2 \cdot {\sum}_{k=0}^{T} [1 - \nu\alpha]^k \\
			&\leq \exp(-T\nu\alpha)(\mer(z^1) - \psi^*) + 2\CL\nu^{-1} \alpha^2 \sigma_*^2,
   \label{eq:com-s-cvx-5}
	\end{align}
where we used $(1-x)^T = \exp(T\log(1-x)) \leq \exp(-Tx)$ and ${\sum}_{k=0}^{T}(1-x)^k=\frac{1-(1-x)^{T+1}}{x} \leq \frac{1}{x}$ in the last line. Thus, invoking \cref{eq:com-s-cvx-1}, we can infer
\begin{align*}
  \psi(w^{T+1})-\psi^* \leq \exp(-T\nu\alpha)\cdot \frac{\mer(z^1)-\psi^*}{1+\mu\tau\lambda} + \frac{6\CL\alpha^2\sigma_*^2}{\mu\tau}.  
\end{align*}
Part (b) can be shown in a similar way using \cref{lem:merit-descent-1} (b). \hfill \BlackBox

\section{Convergence under the Kurdyka-{\L}ojasiewicz (KL) Condition}\label{sec:asymptotic}

The PL condition is generally restrictive and the verification of \cref{eq:pl-cond} is often not possible in many practical applications. To overcome these limitations, we now study the asymptotic behavior of $\NRR$ under a weaker Kurdyka-\L ojasiewicz (KL) setting. 

\subsection{KL Inequality and Accumulation Points}
We first present the definition of the KL inequality for nonsmooth functions; see, e.g., \citep{AttBolRedSou10,AttBol09,AttBolSva13,BolSabTeb14}.

\begin{definition}[KL Property]
	\label[definition]{def:KL-property}
	The function $\psi : \Rn \to \Rex$ is said to have the KL property at a point $\bar w\in \dom{\psi}$ if there exist $c>0$, $\eta\in(0,\infty],$ and a neighborhood $U$ of $\bar w$ such that for all $w \in U \cap \{w: 0 < |\psi(w) - \psi(\bar w)| < \eta\}$, the KL inequality\footnote{The specific KL inequality, we use here, is also referred to the (local) {\L}ojasiewicz inequality, see \citep{bochnak1998real,ha2017genericity}.},
	\be \label{eq:kl-ineq} |\psi(w)-\psi(\bar w)|^\theta \leq c\cdot \dist(0,\partial \psi(w)), \ee
	holds. Here, $\theta\in[0,1)$ is the KL exponent of $\psi$ at $\bar w$.
\end{definition}

The KL inequality is satisfied for semialgebraic, subanalytic, and log-exp functions, underlining its broad generality; see \citep{kur98,BolDanLew-MS-06,BolDanLew06,BolDanLewShi07}. Particular applications include, e.g., least-squares, logistic and Poisson regression \citep{li2018calculus}, deep learning loss models \citep{davis2020stochastic,dereich2021convergence}, and principal component analysis \citep{liu2019quadratic}. Intuitively, the KL inequality implies that $\psi$ can be locally reparameterized as a sharp function near its critical points. It provides a quantitative measure of how quickly the function decreases as it approaches a stationary point which is controlled by the constants $c>0$ and $\theta\in[0,1)$. In stark contrast to the PL condition, the KL inequality in \cref{def:KL-property} is a \emph{local property}. It does not necessitate $\bar w \, (\in \crit{\psi})$ to be a global solution of \cref{SO}. 
In \cref{table:kl-exponent}, we list several popular and exemplary optimization models together with their corresponding worst-case KL exponent $\theta$.

\begin{table}[t]
\centering
{\footnotesize
\begin{tabular}{lcp{6.6cm}} 
\specialrule{1pt}{0pt}{0pt} \\[-2ex]
\multicolumn{1}{c}{\textbf{Optimization model}} & \textbf{KL exponent} & \multicolumn{1}{c}{\textbf{Reference}} \\[0.5ex]
\hline \\[-1.5ex]
$\ell_1$-regularized least-squares & $\frac12$ & \cite[Lemma 10]{bolte2017error} \\[0.5ex]
$\ell_1$-regularized logistic regression & $\frac12$ & \cite[Remark 5.1]{li2018calculus} \\[0.5ex] 
Quadratic optimization with & \multirow{2}*{$\frac12$} & \multirow{2}*{\cite[Theorem 1]{liu2019quadratic}} \\
orthogonality constraints & & \\[0.5ex]
Semidefinite programming & $\frac12$ & \cite[Theorem 4.1]{yulipon19} \\[0.5ex]
Polynomials of degree $r$ & $1-\frac{1}{r(3r-3)^{d-1}}$ &  \cite[Theorem 4.2]{DAcuntoKurdyka2005} \\[1.5ex]
\specialrule{1pt}{0pt}{0pt} 
\end{tabular}
}
\caption{Optimization models and the corresponding KL exponent.}
\label{table:kl-exponent}
\end{table}

In \cite[Lemma 5.3]{ouyang2021trust}, it was shown that the KL property can be transferred from the objective function $\psi$ to the merit function $\mer$. We restate this result in  \cref{lem:KL-mer}. 
\begin{lemma}
	\label[lemma]{lem:KL-mer}
	Suppose that $\psi:\Rn\to\Rex$ satisfies the KL property at a stationary point $\bar{w}=\proxl(\bar{z})$ with exponent $\theta$ and constant $c$. The merit function $\mer:\Rn\to\R$, $\tau > 0$, then satisfies the following KL-type property at $\bar{z}$ with the exponent $\tilde \theta := \max\{\theta, \frac{1}{2}\}$
 \[
 |\mer(z)-\mer(\bar z)|^{\tilde \theta} \leq \tilde c \cdot \norm{\Fnor(z)},\quad \forall\, z\in V\cap \{z\in\Rn:|\mer(z)-\mer(\bar z)|<\tilde\eta\},
 \]
 for $\tilde c := c+\max\{1,\frac{\tau\lambda}{2}\}$, some $\tilde\eta\in(0,\infty]$, and some neighborhood $V$ of $\bar z$.
\end{lemma}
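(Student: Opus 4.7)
The plan is to reduce the bound on $\mer$ to the KL inequality for $\psi$ itself, exploiting two facts: (i) $\Fnor(z)\in\partial\psi(\proxl(z))$ already carries stationarity information about $\psi$, and (ii) the quadratic remainder $\frac{\tau\lambda}{2}\|\Fnor(z)\|^2$ is immediately controllable by $\|\Fnor(z)\|$ whenever $\|\Fnor(z)\|$ is small. First, I would observe that the hypothesis $\bar w=\proxl(\bar z)$ combined with $0\in\partial\psi(\bar w)=\nabla f(\bar w)+\partial\vp(\bar w)$ forces $\bar z=\bar w-\lambda\nabla f(\bar w)$, hence $\Fnor(\bar z)=0$ and $\mer(\bar z)=\psi(\bar w)$.

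Setting $w:=\proxl(z)$, I would split the difference
\[
\mer(z)-\mer(\bar z) \,=\, [\psi(w)-\psi(\bar w)] + \tfrac{\tau\lambda}{2}\|\Fnor(z)\|^2.
\]
Because $\tilde\theta\in[\tfrac12,1)$, the map $t\mapsto t^{\tilde\theta}$ is subadditive on $\R_+$, so
\[
|\mer(z)-\mer(\bar z)|^{\tilde\theta}
\,\leq\, |\psi(w)-\psi(\bar w)|^{\tilde\theta}
\,+\, \bigl(\tfrac{\tau\lambda}{2}\bigr)^{\tilde\theta}\|\Fnor(z)\|^{2\tilde\theta}.
\]
For the second summand, continuity of $\Fnor$ at $\bar z$ with $\Fnor(\bar z)=0$ allows one to fix a neighborhood $V_1$ of $\bar z$ on which $\|\Fnor(z)\|\leq 1$; combined with $2\tilde\theta\geq 1$ and $(\tfrac{\tau\lambda}{2})^{\tilde\theta}\leq\max\{1,\tfrac{\tau\lambda}{2}\}$, this contributes at most $\max\{1,\tfrac{\tau\lambda}{2}\}\,\|\Fnor(z)\|$.

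For the $\psi$-summand, I would invoke the KL inequality of $\psi$ at $\bar w$. Since $\proxl$ is Lipschitz (by \ref{A2} and $\lambda<\rho^{-1}$, cf. \cref{eq:prox-coco}), a further shrinking $V_2\subseteq V_1$ of the neighborhood guarantees that $w=\proxl(z)\in U$ and $|\psi(w)-\psi(\bar w)|<\eta$, so the KL inequality applies. Using $\Fnor(z)\in\partial\psi(w)$, hence $\dist(0,\partial\psi(w))\leq\|\Fnor(z)\|$, we obtain $|\psi(w)-\psi(\bar w)|^{\theta}\leq c\,\|\Fnor(z)\|$. If $\tilde\theta=\theta$, this is exactly the needed bound. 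If instead $\tilde\theta=\tfrac12>\theta$, I would factor
\[
|\psi(w)-\psi(\bar w)|^{1/2} \,=\, |\psi(w)-\psi(\bar w)|^{\theta}\cdot|\psi(w)-\psi(\bar w)|^{1/2-\theta},
\]
and shrink the neighborhood once more so that $|\psi(w)-\psi(\bar w)|^{1/2-\theta}\leq 1$, which is possible by continuity of $\psi\circ\proxl$ at $\bar z$. In either case, $|\psi(w)-\psi(\bar w)|^{\tilde\theta}\leq c\,\|\Fnor(z)\|$. Adding both contributions produces the claimed bound with $\tilde c=c+\max\{1,\tfrac{\tau\lambda}{2}\}$, and $\tilde\eta>0$ can be chosen small enough so that $\{|\mer(z)-\mer(\bar z)|<\tilde\eta\}\cap V$, with $V:=V_2$, lies inside all shrinkings used.

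The main obstacle I anticipate is merely bookkeeping the neighborhood restrictions cleanly: one must pick $V$ and $\tilde\eta$ so that simultaneously $w\in U$, $|\psi(w)-\psi(\bar w)|<\eta$, $\|\Fnor(z)\|\leq 1$, and, when $\theta<\tfrac12$, $|\psi(w)-\psi(\bar w)|^{1/2-\theta}\leq 1$. All four conditions are satisfied on a sufficiently small $V$ by continuity of $\proxl$, $\nabla f$, and $\psi\circ\proxl$ at $\bar z$; no sharp estimates are required beyond subadditivity of $t^{\tilde\theta}$ and the Lipschitz regularity of $\proxl$.
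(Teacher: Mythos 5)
Your argument is sound and follows the standard route for this transfer result; note that the paper itself does not prove \cref{lem:KL-mer} but imports it from \cite[Lemma 5.3]{ouyang2021trust}. Your decomposition of $\mer(z)-\mer(\bar z)$ into the $\psi$-part (controlled by the KL inequality of $\psi$ together with $\Fnor(z)\in\partial\psi(\proxl(z))$) and the quadratic part (controlled by $2\tilde\theta\geq 1$ and $\|\Fnor(z)\|\leq 1$ near $\bar z$) is exactly the expected mechanism, as is the exponent adjustment to $\max\{\theta,\frac12\}$ and the resulting constant $c+\max\{1,\frac{\tau\lambda}{2}\}$.

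The one step that is not correct as written is the opening claim that $0\in\partial\psi(\bar w)$ together with $\bar w=\proxl(\bar z)$ \emph{forces} $\bar z=\bar w-\lambda\nabla f(\bar w)$ and hence $\Fnor(\bar z)=0$. The prox optimality condition only gives $\lambda^{-1}(\bar z-\bar w)\in\partial\vp(\bar w)$, while stationarity gives $-\nabla f(\bar w)\in\partial\vp(\bar w)$; when $\partial\vp(\bar w)$ is multivalued these two subgradients need not coincide. For instance, with $\vp=|\cdot|$, $\nabla f(0)=0$, and $\bar z=\lambda$ one has $\bar w=\proxl(\bar z)=0$ stationary but $\Fnor(\bar z)=1\neq 0$. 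This does not sink the lemma: in the paper the result is only invoked at points $\bar z\in\mathcal A_z$, where $\Fnor(\bar z)=0$ holds by \cref{lemma:limit point set}, and in the residual case $\Fnor(\bar z)\neq 0$ the claimed inequality is trivially satisfied for a small enough $V$ and $\tilde\eta$, since $\|\Fnor(z)\|$ is then bounded away from zero near $\bar z$ while the left-hand side is at most $\tilde\eta^{\tilde\theta}$. So you should either add $\Fnor(\bar z)=0$ as an explicit (and, for the paper's purposes, harmless) hypothesis or dispose of the other case by this one-line observation; the remaining bookkeeping in your write-up (continuity of $\Fnor$ and of $z\mapsto\psi(\proxl(z))$, Lipschitz continuity of $\proxl$, subadditivity of $t\mapsto t^{\tilde\theta}$) is correct.
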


Let $\{w^k\}_k$ and $\{z^k\}_k$ be generated by $\NRR$ and let us define the associated sets of accumulation points $\mathcal{A}_w$ and $\mathcal{A}_z$:
\begin{equation}
	\label{eq:accumulation-point-set}
	\begin{aligned}
	 \mathcal{A}_w&:=\{w\in\Rn:\exists\text{ a subsequence }\{\ell_k\}_k\subseteq\N\text{ such that }w^{\ell_k}\to w\},\quad \text{and}\\
	 \mathcal{A}_z&:=\{z\in\Rn:\exists\text{ a subsequence }\{\ell_k\}_k\subseteq\N\text{ such that }z^{\ell_k}\to z\}.
	\end{aligned}
\end{equation}
%
In \cref{lemma:limit point set}, we list basic properties of the sets $\mathcal{A}_w$ and $\mathcal{A}_z$, which are direct consequences of \cref{thm:global_convergence}. The proof is deferred to \cref{proof:limit point set}. 

\begin{lemma}[Accumulation Points] \label[lemma]{lemma:limit point set}
	Let the conditions stated in \cref{thm:global_convergence} be satisfied and let $\{w^k\}_k$ be bounded. Then, the following statements are valid:
	\begin{enumerate}[label=\textup{(\alph*)},topsep=1pt,itemsep=0ex,partopsep=0ex]
		\item  The sets $\mathcal{A}_w$ and $\mathcal{A}_z$ are nonempty and compact. 
		\item  We have $\mathcal{A}_z \subseteq \{z \in \Rn: \Fnor(z)= 0\}$ and $\mathcal{A}_w = \{w \in \Rn: \exists~z \in \mathcal A_z \; \text{such that} \; w = \proxi{\lambda}(z)\} \subseteq \crit{\psi}$. 
		\item The functions $\psi$ and $\mer$, $\tau > 0$ are finite and constant on $\mathcal{A}_w$ and $\mathcal{A}_z$, respectively. 
	\end{enumerate}
\end{lemma}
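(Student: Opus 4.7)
\textbf{Proof proposal for \cref{lemma:limit point set}.} The plan is to obtain (a) from boundedness of the iterate sequences, (b) from continuity of $\Fnor$ and $\prox{\lambda\vp}$ together with $\Fnor(z^k) \to 0$ (\cref{thm:global_convergence}), and (c) by combining the lsc.\ of $\psi$ with the prox optimality condition to upgrade the lsc.\ bound into full continuity along convergent subsequences.

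For (a), I would first show that $\{z^k\}_k$ is bounded. Using the definition of the normal map, $z^k = w^k + \lambda[\Fnor(z^k) - \nabla f(w^k)]$. Since $\{w^k\}_k$ is bounded by assumption, $\{\nabla f(w^k)\}_k$ is bounded by \ref{A1}, and $\Fnor(z^k) \to 0$ by \cref{thm:global_convergence}, the sequence $\{z^k\}_k$ is bounded, so $\mathcal{A}_z \neq \emptyset$. Nonemptyness of $\mathcal{A}_w$ is immediate from Bolzano–Weierstrass. Closedness of both accumulation sets is a standard fact (the set of subsequential limits of a sequence in $\Rn$ is always closed), and boundedness then yields compactness.

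For (b), I would use that $\Fnor$ is continuous, which follows from Lipschitz continuity of $\nabla f$ on $\dom{\vp}$ (\ref{A1}) together with the $(1-\lambda\rho)^{-1}$-Lipschitz continuity of $\prox{\lambda\vp}$ noted after \cref{eq:prox-coco}. Hence for any $z \in \mathcal{A}_z$ with $z^{\ell_k} \to z$, we have $\Fnor(z) = \lim_k \Fnor(z^{\ell_k}) = 0$, giving the first inclusion. To relate $\mathcal{A}_w$ and $\mathcal{A}_z$, I would observe that $w^k = \proxi{\lambda}(z^k)$; for any $w \in \mathcal{A}_w$ with $w^{\ell_k} \to w$, the boundedness of $\{z^{\ell_k}\}$ allows us to pass to a further subsequence with $z^{\ell_{k_j}} \to z \in \mathcal{A}_z$, and continuity of $\proxi{\lambda}$ forces $w = \proxi{\lambda}(z)$. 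The converse direction is analogous. Finally, the inclusion in $\crit{\psi}$ follows directly from $\Fnor(z) \in \partial \psi(\proxi{\lambda}(z))$ (see \cref{eq:def-normal}) with $\Fnor(z) = 0$.

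For (c), the main obstacle is that $\vp$ is only lsc, so convergence of $\psi(w^{\ell_k})$ to $\psi(w)$ is not automatic. The idea is to exploit the prox optimality inequality
\[ \vp(w^{\ell_k}) + \tfrac{1}{2\lambda}\|z^{\ell_k} - w^{\ell_k}\|^2 \leq \vp(w) + \tfrac{1}{2\lambda}\|z^{\ell_k} - w\|^2, \]
which holds because $w^{\ell_k} = \proxi{\lambda}(z^{\ell_k})$ and $w \in \dom{\vp}$ (using the representation $w = \proxi{\lambda}(z)$ from part (b)). Passing $k \to \infty$ along the subsequence and using $z^{\ell_k} \to z$, $w^{\ell_k} \to w$ yields $\limsup_k \vp(w^{\ell_k}) \leq \vp(w)$. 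Combined with lsc.\ of $\vp$, this gives $\vp(w^{\ell_k}) \to \vp(w)$, hence $\psi(w^{\ell_k}) \to \psi(w)$ by continuity of $f$. Since $\psi(w^k) \to \bar\psi$ by \cref{thm:global_convergence}, every element of $\mathcal{A}_w$ has $\psi$-value $\bar\psi$. Constancy of $\mer$ on $\mathcal{A}_z$ then follows from $\mer(z) = \psi(\proxi{\lambda}(z)) + \tfrac{\tau\lambda}{2}\|\Fnor(z)\|^2 = \bar\psi + 0$ for $z \in \mathcal{A}_z$.
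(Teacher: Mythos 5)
Your proof is correct and follows essentially the same route as the paper: boundedness of $\{z^k\}_k$ via the normal-map identity for (a), continuity of $\Fnor$ and $\prox{\lambda\vp}$ for (b), and convergence of $\{\psi(w^k)\}_k$ from \cref{thm:global_convergence} for (c). The only notable difference is in (c), where the paper simply asserts continuity of $z\mapsto\psi(\prox{\lambda\vp}(z))$ while you derive it from the prox optimality inequality combined with lower semicontinuity of $\vp$ --- a self-contained justification of a step the paper leaves implicit.
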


\subsection{Strong Convergence}\label{subsec:strong_conv}
Based on the KL property formulated in \cref{def:KL-property}, our aim is now to show that the whole sequence $\{w^k\}_k$ converges to a stationary point of $\psi$.

\begin{assumption}[KL Conditions]
	\label[assumption]{Assumption:2} 
	We consider the following assumptions:
	\begin{enumerate}[label=\textup{\textrm{(K.\arabic*)}},topsep=1pt,itemsep=0ex,partopsep=0ex,leftmargin=8ex]
		\item \label{B1} The sequence $\{w^k\}_k$ generated by $\NRR$ is bounded.
		\item \label{B2} The KL property holds on $\mathcal A_w$, i.e., \cref{eq:kl-ineq} holds for all $\bar w \in \mathcal A_w$.
	\end{enumerate}
\end{assumption}

Condition \ref{B1} is a typical and ubiquitous prerequisite appearing in the application of the KL inequality in establishing the convergence of optimization algorithms; see, e.g., \citep{AttBol09,AttBolRedSou10,AttBolSva13,BolSabTeb14,ochchebropoc14,BonLorPorPraReb17}. Moreover, \ref{B1} can be ensured if $\psi$ has bounded sub-level sets or $\dom{\vp}$ is compact. As mentioned, condition \ref{B2} is naturally satisfied for subanalytic or semialgebraic functions \cite[Theorem \L 1]{kur98}. 

\begin{theorem}[Strong Iterate Convergence]\label{thm:finite-sum}
Assume \ref{A1}--\ref{A3} and \ref{B1}--\ref{B2}. Let $\{w^k\}_k$, $\{z^k\}_k$ be generated by $\NRR$ with $\lambda\in(0,\frac{1}{4\rho})$ and step sizes $0 < \alpha_k \leq \frac{\bar\alpha}{n}$ satisfying
		\begin{equation}
		\label{eq:kl-step}
{\sum}_{k=1}^{\infty}\alpha_k=\infty
		\quad\text{and}\quad {\sum}_{k=1}^{\infty}\alpha_k\left[{{\sum}_{i=k}^{\infty}\alpha_i^3}\right]^{\xi}<\infty,\quad \text{for some }\xi\in(0,1).
	\end{equation} 
We then have \vspace{-1ex}
\begin{equation} \label{eq:kl-step-02}
{\sum}_{k=1}^{\infty}\alpha_k \cdot \dist(0,\partial \psi(w^k)) <\infty \quad \text{and}\quad {\sum}_{k=1}^{\infty}\norm{w^{k+1}-w^k}<\infty. \vspace{-1ex}
\end{equation}
In addition, the whole sequence $\{w^k\}_k$ converges to some stationary point $w^*\in\crit{\psi}$.
\end{theorem}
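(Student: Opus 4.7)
The plan is to adapt the classical Kurdyka-{\L}ojasiewicz-based convergence framework for descent-type methods (cf. \citep{AttBolSva13,BolSabTeb14}) to the approximate-descent setting of $\NRR$. The key idea is to absorb the stochastic reshuffling error into the merit function via a tail correction, producing a monotone auxiliary sequence to which a standard telescoping argument with the desingularizer can be applied.

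First, I would invoke \cref{thm:global_convergence,lemma:limit point set} to extract the asymptotic ingredients: $\mer(z^k)\to\bar\psi\in\mathbb R$, $\Fnor(z^k)\to 0$, $\mathcal A_z$ is nonempty and compact with $\mer\equiv\bar\psi$ on $\mathcal A_z$, and $\mathcal A_w\subseteq\crit{\psi}$. Boundedness of $\{z^k\}_k$ follows from \ref{B1} and the identity $z^k=w^k-\lambda\nabla f(w^k)+\lambda\Fnor(z^k)$ (using \ref{A1}). Applying \cref{lem:KL-mer} together with a standard uniformization over the compact set $\mathcal A_z$ then furnishes $k_0\in\mathbb N$, $\tilde c,\tilde\eta>0$, and $\tilde\theta:=\max\{\theta,\tfrac12\}\in[\tfrac12,1)$ such that $|\mer(z^k)-\bar\psi|^{\tilde\theta}\leq\tilde c\,\|\Fnor(z^k)\|$ for all $k\geq k_0$. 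Since $\beta_k:=\sum_{i\geq k}\alpha_i^3\to 0$, the assumption $\sum_k\alpha_k\beta_k^\xi<\infty$ upgrades to $\sum_k\alpha_k\beta_k^{\tilde\theta}<\infty$ (if $\xi\leq\tilde\theta$, this holds eventually by $\beta_k^{\tilde\theta}\leq\beta_k^\xi$; if $\xi>\tilde\theta$, one can instead use KL with exponent $\xi\geq\tilde\theta$, since $|\Phi_k|^\xi\leq|\Phi_k|^{\tilde\theta}\leq\tilde c\|\Fnor(z^k)\|$ for $|\Phi_k|\leq 1$).

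Second, I would introduce the modified merit $\Psi_k:=\mer(z^k)-\bar\psi+M\beta_k$ with $M>0$ large enough to dominate the uniform variance bound $\CL n^3\sigma_k^2\leq \CL n^3 B$ (where $B$ is obtained from \cref{lem:var-bound} combined with $\sup_k\psi(w^k)<\infty$ by \ref{B1}). Substituting into \cref{lem:merit-descent-1}~(a) produces the strict descent
\[ \Psi_k-\Psi_{k+1}\geq \tfrac{1}{8n\alpha_k}\|w^{k+1}-w^k\|^2+\tfrac{\tau n\alpha_k}{4}\|\Fnor(z^k)\|^2, \]
so monotonicity together with $\Psi_k\to 0$ yields $\Psi_k\geq 0$ for all $k$. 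An AM-GM step extracts $\Psi_k-\Psi_{k+1}\geq c\,\|w^{k+1}-w^k\|\|\Fnor(z^k)\|$ (with $c:=\sqrt{\tau/8}$), while subadditivity gives $\Psi_k^{\tilde\theta}\leq|\mer(z^k)-\bar\psi|^{\tilde\theta}+(M\beta_k)^{\tilde\theta}\leq\tilde c\,\|\Fnor(z^k)\|+M^{\tilde\theta}\beta_k^{\tilde\theta}$. With the desingularizer $\phi(s):=s^{1-\tilde\theta}/(1-\tilde\theta)$ and the concavity inequality $\phi(\Psi_k)-\phi(\Psi_{k+1})\geq\Psi_k^{-\tilde\theta}(\Psi_k-\Psi_{k+1})$, I would now split indices $k\geq k_0$ into Case~A $\{\tilde c\|\Fnor(z^k)\|\geq M^{\tilde\theta}\beta_k^{\tilde\theta}\}$ and Case~B (the converse), exploiting $\Psi_k^{-\tilde\theta}\geq(2\max\{\tilde c\|\Fnor(z^k)\|,M^{\tilde\theta}\beta_k^{\tilde\theta}\})^{-1}$.

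In Case~A, the classical argument gives both $\|w^{k+1}-w^k\|\leq C_A[\phi(\Psi_k)-\phi(\Psi_{k+1})]$ and, by the analogous derivation that divides by $\|\Fnor(z^k)\|^2$ instead of $\|w^{k+1}-w^k\|\|\Fnor(z^k)\|$, the matching bound $\alpha_k\|\Fnor(z^k)\|\leq C_A'[\phi(\Psi_k)-\phi(\Psi_{k+1})]$. In Case~B, the inequality $\Psi_k-\Psi_{k+1}\leq 2M^{\tilde\theta}\beta_k^{\tilde\theta}[\phi(\Psi_k)-\phi(\Psi_{k+1})]$ plugged into the descent and combined with Young's inequality gives $\|w^{k+1}-w^k\|\leq C_B[\alpha_k\beta_k^{\tilde\theta}+\phi(\Psi_k)-\phi(\Psi_{k+1})]$ and, analogously, $\alpha_k\|\Fnor(z^k)\|\leq C_B'[\alpha_k\beta_k^{\tilde\theta}+\phi(\Psi_k)-\phi(\Psi_{k+1})]$. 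Telescoping across both cases yields
\[ \sum_{k\geq k_0}\|w^{k+1}-w^k\|+\sum_{k\geq k_0}\alpha_k\|\Fnor(z^k)\|\leq \tilde C\Big[\phi(\Psi_{k_0})+\sum_{k\geq k_0}\alpha_k\beta_k^{\tilde\theta}\Big]<\infty, \]
which, via $\dist(0,\partial\psi(w^k))\leq\|\Fnor(z^k)\|$, gives both claims of \cref{eq:kl-step-02}. Absolute summability of $\{\|w^{k+1}-w^k\|\}$ renders $\{w^k\}_k$ Cauchy, hence convergent to some $w^\star$, and $w^\star\in\mathcal A_w\subseteq\crit\psi$ by \cref{lemma:limit point set}.

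The main obstacle is the stochastic error $\CL n^3\alpha_k^3\sigma_k^2$ in the descent inequality, which breaks the exact-descent structure required by the classical Attouch-Bolte-Svaiter-type telescoping. The tail correction $M\beta_k$ restores monotonicity, but injects a slack term $M^{\tilde\theta}\beta_k^{\tilde\theta}$ into the KL-derived lower bound on $\Psi_k^{-\tilde\theta}$, which must be absorbed via the Case~A/B split. This is precisely where the step-size condition $\sum_k\alpha_k\beta_k^\xi<\infty$ enters and needs to be tightly matched to the KL exponent $\tilde\theta$ to close the argument.
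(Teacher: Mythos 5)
Your proposal is correct and follows essentially the same route as the paper's proof: uniformize the KL inequality of \cref{lem:KL-mer} over the compact set $\mathcal A_z$ with adjusted exponent $\max\{\tilde\theta,\xi\}$, restore monotone descent by adding the tail correction $\sum_{i\geq k}\alpha_i^3$ to the merit function, and telescope the desingularized sequence to obtain both sums in \cref{eq:kl-step-02} simultaneously. The only (immaterial) difference is that you handle the slack term via a Case~A/B split on which of $\tilde c\|\Fnor(z^k)\|$ and $M^{\tilde\theta}\beta_k^{\tilde\theta}$ dominates, whereas the paper keeps both terms in a single chain using subadditivity of $s\mapsto s^{\vartheta}$ together with an AM--GM step.
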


\cref{thm:finite-sum} not only ensures convergence of the iterates  $\{w^k\}_k$, but, invoking \cref{eq:kl-step-02}, we also have $\min_{i=1,\ldots,k}\dist(0,\partial \psi(w^i))^2=\cO(1/(\sum_{i=1}^k \alpha_i)^2)$. This bound is faster compared to the global complexity $\min_{i=1,\ldots,k}\dist(0,\partial \psi(w^i))^2=\cO(1/(\sum_{i=1}^k \alpha_i))$ obtained in \cref{thm:compexity}. 

\begin{mdframed}[style=proofbox]
\textbf{Proof sketch.} By \cref{thm:global_convergence}, we conclude that every accumulation point of $\{w^k\}_k$ is a stationary point of $\psi$. Based on the approximate descent property \cref{lem:merit-descent-1}~(a), the conditions in \cref{lemma:limit point set}, and the KL assumptions \ref{B1}--\ref{B2}, we can show that $\sum_{k=1}^{\infty}\alpha_k\cdot \dist(0,\partial \psi(w^k)) \leq \sum_{k=1}^{\infty}\alpha_k\|\Fnor(z^k)\|<\infty$ and ${\sum}_{k=1}^{\infty}\norm{w^{k+1}-w^k}<\infty$. The latter result indicates that $\{w^k\}_k$ is a Cauchy sequence and thus, $\{w^k\}_k$ converges to some stationary point of the objective function $\psi$. The full  proof and further details are presented in \cref{proof:finite-sum}.
\end{mdframed}

\subsection{Convergence Rates under Polynomial Step Sizes} \label{sec:rate}
In this section, we establish convergence rates for the sequences $\{w^k\}_k$, $\{\dist(0,\partial \psi(w^k))\}_k$, and $\{\psi(w^k)\}_k$ under more specific step size strategies. We consider polynomial step sizes, \citep{robbins1951stochastic,chung1954stochastic,bottou2018optimization}, of the form:
\begin{equation}
	\label{eq:step-size}
	\alpha_k=\frac{\alpha}{(\beta+k)^\gamma},\quad \text{ with } \alpha>0, \quad \beta\geq 0,\quad \gamma\in\left(\frac12,1\right].
\end{equation}
Note that the first two step size conditions in \cref{thm:finite-sum} are satisfied when $k \geq (n\alpha/\bar \alpha)^\frac{1}{\gamma}-\beta$. We now present several preparatory bounds to facilitate the derivation of the rates.
\begin{lemma}\label[lemma]{lemma:step size}
	Let $\xi \in [0,1)$ be given and let us consider step sizes $\{\alpha_k\}_k$ of the form \cref{eq:step-size}. 
\begin{enumerate}[label=\textup{(\alph*)},topsep=0pt,itemsep=0ex,partopsep=0ex]
\item For all $k \geq 1$, we have 
\begin{equation}
	\label{eq:esti-u}
	{\sum}_{j=k}^\infty\alpha_j^3 \leq  \frac{a_\gamma}{(k+\beta)^{3\gamma-1}} \quad \text{and} \quad \alpha_k \left[{\sum}_{j=k}^\infty\alpha_j^3\right]^{2\xi}\leq \frac{a_\gamma}{(k+\beta)^{(1+6\xi)\gamma-2\xi}},
\end{equation}
where $a_\gamma>0$ is a numerical constant depending on $\alpha$ and $\gamma$.
	\item Moreover, if $\xi >\frac{1-\gamma}{3\gamma-1}$, then for all $k\geq 1$, we have
	\be\label{eq:esti-eps} 
	{\sum}_{t=k}^\infty \alpha_t \left[{\sum}_{j=t}^\infty\alpha_j^3\right]^{\xi} \leq \frac{{a}_\xi}{(k+\beta)^{(1+3\xi)\gamma-(1+\xi)}},
	\ee 
	where ${a}_\xi > 0$ is a constant depending on $\xi$.
\end{enumerate}	
\end{lemma}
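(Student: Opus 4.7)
The plan is to reduce both parts to the standard integral-test bound $\sum_{j=k}^{\infty}(\beta+j)^{-\mu}\lesssim (\beta+k)^{-(\mu-1)}$ whenever $\mu>1$, and then to track exponents. Since $\alpha_k=\alpha/(\beta+k)^{\gamma}$ with $\gamma\in(\tfrac12,1]$, we have $3\gamma>1$, so the first tail sum converges.

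For part (a), the first inequality I would prove by combining the first term and an explicit integral bound,
\[
\sum_{j=k}^{\infty}(\beta+j)^{-3\gamma}\;\le\;(\beta+k)^{-3\gamma}+\int_{k}^{\infty}(\beta+t)^{-3\gamma}\,dt \;=\;(\beta+k)^{-3\gamma}+\tfrac{1}{3\gamma-1}(\beta+k)^{-(3\gamma-1)}.
\]
Using $(\beta+k)^{-3\gamma}\le (\beta+1)^{-1}(\beta+k)^{-(3\gamma-1)}$ for $k\ge 1$, one absorbs everything into a constant $a_\gamma$ depending only on $\alpha,\beta,\gamma$ and multiplies by $\alpha^{3}$ to get the first estimate. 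For the second estimate, I would simply substitute the first bound, raise to the power $2\xi$, and multiply by $\alpha_k$: the resulting exponent is
\[
\gamma+2\xi(3\gamma-1)\;=\;(1+6\xi)\gamma-2\xi,
\]
which matches the statement (the constant $a_\gamma$ may be enlarged to absorb $\alpha^{1+6\xi}$ and $a_\gamma^{2\xi}$).

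For part (b), I would plug the first bound of (a) into the tail product:
\[
\sum_{t=k}^{\infty}\alpha_t\Bigl[\sum_{j=t}^{\infty}\alpha_j^{3}\Bigr]^{\xi}\;\le\;\alpha\,a_\gamma^{\xi}\sum_{t=k}^{\infty}(\beta+t)^{-[\gamma+(3\gamma-1)\xi]}.
\]
The hypothesis $\xi>(1-\gamma)/(3\gamma-1)$ is exactly $\gamma+(3\gamma-1)\xi>1$, so the series converges and one more application of the same integral-comparison bound yields a tail of order $(\beta+k)^{-[\gamma+(3\gamma-1)\xi-1]}$. Algebraic simplification gives the advertised exponent $(1+3\xi)\gamma-(1+\xi)$, and all accumulated constants can be gathered into a single $a_\xi$ depending on $\alpha,\beta,\gamma,\xi$.

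The main ``obstacle'' here is really just bookkeeping: choosing the integral comparison cleanly enough that a single constant in front of $(\beta+k)^{-\cdot}$ suffices for all $k\ge 1$ (which forces the separate treatment of the leading $(\beta+k)^{-3\gamma}$ term), and tracking the two exponent identities $\gamma+2\xi(3\gamma-1)=(1+6\xi)\gamma-2\xi$ and $\gamma+(3\gamma-1)\xi-1=(1+3\xi)\gamma-(1+\xi)$ to confirm that the condition $\xi>(1-\gamma)/(3\gamma-1)$ is precisely what makes the tail in (b) summable. Nothing deeper than the integral test and some elementary algebra is required.
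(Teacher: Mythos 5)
Your proposal is correct and follows essentially the same route as the paper: the integral test gives $\sum_{j\geq k}(\beta+j)^{-3\gamma}\lesssim(\beta+k)^{-(3\gamma-1)}$ for part (a), and a second application (valid precisely because $\xi>\frac{1-\gamma}{3\gamma-1}$ is equivalent to $\gamma+(3\gamma-1)\xi>1$) gives part (b), which the paper itself only sketches by citing \cite[Lemma 3.7]{li2023convergence}. The exponent identities you verify are exactly the ones needed, so nothing is missing.
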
 
\begin{proof}
Using the integral test and noting that $3\gamma>1$, we obtain 
\[
{\sum}_{j=k}^\infty\alpha_j^3 = {\sum}_{j=k}^\infty \;\frac{\alpha^3}{(j+\beta)^{3\gamma}} \leq \frac{\alpha^3}{(k+\beta)^{3\gamma}} + \alpha^3\int_{k}^\infty \frac{1}{(x+\beta)^{3\gamma}}\; \rmn{d}x \leq \frac{3\gamma \alpha^3}{3\gamma-1} \frac{1}{(k+\beta)^{3\gamma-1}}.
\]
In addition, it follows that $
\alpha_k [{\sum}_{j=k}^\infty\,\alpha_j^3]^{2\xi} \leq \alpha^{1+6\xi}(\frac{3\gamma}{3\gamma-1})^{2} \cdot \frac{1}{(k+\beta)^{2\xi(3\gamma-1)+\gamma}}$. Noting $\alpha^{1+6\xi} \leq (1+\alpha)^7$, this completes the proof of the statement (a). Part (b) can be shown by applying the results in (a) and the integral test;  we refer to \cite[Lemma 3.7]{li2023convergence}. 
\end{proof}

Based on \cref{thm:finite-sum} and \cref{lemma:step size}, we can ensure the strong-limit convergence of $\NRR$ under the polynomial step size rule \cref{eq:step-size}.  
\begin{corollary}[Strong Convergence: Polynomial Step Sizes] \label[corollary]{cor:conv} Assume that the conditions \ref{A1}--\ref{A3} and \ref{B1}--\ref{B2} hold. Let the iterates $\{w^k\}_{k}$ be generated by $\NRR$ with $\lambda\in(0,\frac{1}{4\rho})$ and polynomial step sizes $\{\alpha_k\}_k$ of the form \cref{eq:step-size}.
	%
	%
	Then, $\{w^k\}_{k}$ has finite length and converges to some stationary point $w^*\in\crit{\psi}$. 
\end{corollary}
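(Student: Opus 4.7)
The plan is to verify that the polynomial step-size rule \cref{eq:step-size} fits into the hypotheses of \cref{thm:finite-sum} and then to invoke that theorem directly; since \ref{A1}--\ref{A3} and \ref{B1}--\ref{B2} are already assumed, only the three step-size conditions need to be checked.

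First, since $\alpha_k = \alpha/(\beta+k)^\gamma \to 0$, there exists some $k_0 \in \N$ (explicitly $k_0 = \lceil (n\alpha/\bar\alpha)^{1/\gamma} - \beta\rceil$, as already noted in the text before the corollary) such that $\alpha_k \leq \bar\alpha/n$ for all $k \geq k_0$. Because the conclusion is an asymptotic statement about the tail of $\{w^k\}_k$, one may shift the index and, without loss of generality, assume the bound holds for every $k$. Second, the divergence $\sum_{k=1}^\infty \alpha_k = \infty$ is immediate from $\gamma \leq 1$ via the integral test.

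The main point is the summability condition $\sum_{k=1}^\infty \alpha_k\bigl[\sum_{i=k}^\infty \alpha_i^3\bigr]^{\xi} < \infty$ for some $\xi \in (0,1)$. Combining the first estimate in \cref{lemma:step size}~(a) with $\alpha_k = \alpha/(k+\beta)^\gamma$ yields
\[
\alpha_k \Bigl[{\sum}_{i=k}^\infty \alpha_i^3\Bigr]^{\xi} \leq \frac{C_\xi}{(k+\beta)^{\gamma + \xi(3\gamma - 1)}},
\]
for some constant $C_\xi > 0$, so the series is finite if and only if $\gamma + \xi(3\gamma-1) > 1$, i.e., $\xi > (1-\gamma)/(3\gamma-1)$. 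Because $\gamma > \tfrac12$ forces $(1-\gamma)/(3\gamma-1) < 1$, the interval $\bigl((1-\gamma)/(3\gamma-1),\,1\bigr)$ is nonempty and any $\xi$ drawn from it fulfills the requirement. This is precisely the point at which the restriction $\gamma \in (\tfrac12,1]$ enters: if $\gamma \leq \tfrac12$, no admissible $\xi < 1$ would exist.

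With all three step-size hypotheses established, \cref{thm:finite-sum} applies and delivers both $\sum_{k=1}^\infty \|w^{k+1}-w^k\| < \infty$ (finite length) and convergence of the whole sequence $\{w^k\}_k$ to some stationary point $w^* \in \crit{\psi}$. The main obstacle is really no serious technical one; it amounts to choosing the exponent $\xi$ correctly and to noting that the condition $\gamma > \tfrac12$ is exactly what guarantees a valid choice.
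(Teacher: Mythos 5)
Your proposal is correct and follows essentially the same route as the paper: both reduce the corollary to checking the three step-size hypotheses of \cref{thm:finite-sum}, with the only substantive point being that $\xi$ can be chosen in $(\frac{1-\gamma}{3\gamma-1},1)$, which is nonempty precisely because $\gamma>\frac12$. The paper invokes \cref{lemma:step size}~(b) directly where you re-derive the needed summability from the part~(a)-type termwise bound plus the $p$-series test, but since part~(b) is itself proved exactly that way, this is the same argument.
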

\begin{proof}  
We need to verify that $\alpha_k = \frac{\alpha}{(k+\beta)^\gamma}$ satisfies \cref{eq:kl-step} in \cref{thm:finite-sum}. Due to $\gamma \leq 1$ and $\alpha_k \to 0$, we have $\alpha_k \leq \frac{\bar \alpha}{n}$ (for all $k$ sufficiently large) and $\sum_{k=1}^\infty \alpha_k = \infty$. Using \cref{lemma:step size} (b), it holds that  ${\sum}_{k=1}^{\infty}\alpha_k\prt{{\sum}_{i=k}^{\infty}\alpha_i^3}^{\xi}<\infty$ for all $\xi\in(\frac{1-\gamma}{3\gamma-1},1) \subseteq (0,1)$. Therefore, \cref{thm:finite-sum} is applicable and $\{w^k\}_k$ has finite length and converges to some stationary point $w^*$ of $\psi$.
\end{proof}

Next, we derive the convergence rates for $\{\psi(w^k)\}_k$, $\{\dist(0,\partial \psi(w^k))^2\}_k$ and $\{w^k\}_k$.
\begin{theorem}[Convergence Rates] \label{thm:convergence rate} Assume \ref{A1}--\ref{A3} and \ref{B1}--\ref{B2}. Let $\{w^k\}_{k}$ be generated by $\NRR$ with $\lambda\in(0,\frac{1}{4\rho})$ and step sizes satisfying \cref{eq:step-size}. Then, $\{w^k\}_{k}$ converges to some $w^*\in\crit{\psi}$ and for all $k\geq 1$ sufficiently large, it holds that
\[
	\max\{\dist(0,\partial \psi(w^k))^2,|\psi(w^k)-\psi(w^*)|\} = 
	\begin{cases} \mathcal O(k^{-(3\gamma-1)}) & \text{if} \; 0\leq \theta < \frac{\gamma}{3\gamma-1}, \\[2mm] 
		\mathcal O(k^{-\frac{1-\gamma}{2\theta-1}}) & \text{if} \; \frac{\gamma}{3\gamma-1} \leq  \theta <1,
	\end{cases} \quad \text{if} \ \gamma \in {\textstyle (\frac12,1)}
\]
and 
\[
	\|w^k - w^*\| = 
	\begin{cases} \mathcal O(k^{-(2\gamma-1)}) & \text{if} \; 0\leq \theta < \frac{\gamma}{3\gamma-1}, \\[2mm] 
		\mathcal O(k^{-\frac{(1-\theta)(1-\gamma)}{2\theta-1}}) & \text{if} \; \frac{\gamma}{3\gamma-1} \leq  \theta <1,
	\end{cases} \quad \text{if} \ \gamma \in {\textstyle (\frac12,1)}.
\]
Moreover, if $\theta \in [0, \frac12]$, $\gamma = 1$, and $\alpha > 16\tilde c^2/(\tau n)$, it follows 
\[\max\{\dist(0,\partial \psi(w^k))^2,|\psi(w^k)-\psi(w^*)|\} = \cO(k^{-2}) \quad \text{and} \quad \|w^k - w^*\| = \mathcal O(k^{-1}).\]
Here, $\theta\in[0,1)$ is the KL exponent of $\psi$ at $w^*$ and $\tilde c>0$ is the corresponding KL constant introduced in \cref{lem:KL-mer}.
\end{theorem}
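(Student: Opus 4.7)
The plan is to follow the classical KL-based convergence-rate machinery, adapted to the merit function $\mer$ and carrying the stochastic perturbation $\CL n^{3}\alpha_k^{3}\sigma_k^{2}$ through the descent inequality. First, \cref{cor:conv} guarantees $w^k\to w^\ast\in\crit{\psi}$; combined with \cref{thm:global_convergence} and the continuity of $\nabla f$ this yields $z^k\to z^\ast:=w^\ast-\lambda\nabla f(w^\ast)$, with $\Fnor(z^\ast)=0$ and $\mer(z^\ast)=\psi(w^\ast)$. For $k\ge k_0$ large enough, $z^k$ lies in the neighborhood where \cref{lem:KL-mer} applies, so with $r_k:=\mer(z^k)-\psi(w^\ast)\ge 0$ and $\tilde\theta:=\max\{\theta,\tfrac12\}$ the lifted KL inequality $r_k^{\,\tilde\theta}\le \tilde c\,\|\Fnor(z^k)\|$ holds. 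Since $\{w^k\}_k$ is bounded, \cref{lem:var-bound} supplies a uniform bound $\sigma_k^{2}\le\bar\sigma^{2}$, and inserting the KL inequality into \cref{lem:merit-descent-1}(a) produces the master recursion
\[ r_{k+1} \;\le\; r_k - \frac{\tau n\alpha_k}{4\tilde c^{\,2}}\,r_k^{\,2\tilde\theta} + \CL n^{3}\bar\sigma^{2}\alpha_k^{3},\qquad k\ge k_0. \]

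Next, I convert this master inequality into explicit rates, splitting on $\tilde\theta$. When $\tilde\theta=\tfrac12$ (i.e.\ $\theta\in[0,\tfrac12]$) the recursion is linear-with-perturbation,
\[ r_{k+1} \;\le\; \bigl(1-\tfrac{\tau n\alpha_k}{4\tilde c^{\,2}}\bigr)r_k + \CL n^{3}\bar\sigma^{2}\alpha_k^{3}.\]
For the sharp case $\gamma=1$ the hypothesis $\alpha>16\tilde c^{\,2}/(\tau n)$ forces a contraction coefficient strictly exceeding $4/k$, so a standard induction $r_k\le M/k^{2}$ closes since $\alpha_k^{3}=\mathcal O(k^{-3})$, yielding $r_k=\mathcal O(k^{-2})$. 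For $\gamma\in(\tfrac12,1)$ I telescope the descent from $k$ to infinity and invoke \cref{lemma:step size}(a) to bound $\sum_{j\ge k}\alpha_j^{3}=\mathcal O(k^{-(3\gamma-1)})$, then combine this with the KL lower bound $\|\Fnor(z^j)\|^{2}\ge r_j/\tilde c^{\,2}$ and Abel-type summation to upgrade to the pointwise estimate $r_k=\mathcal O(k^{-(3\gamma-1)})$. When $\tilde\theta=\theta\in(\tfrac12,1)$, I apply the Łojasiewicz desingularizer $\phi(s)=s^{1-\theta}/(1-\theta)$ to the master inequality: using $\phi(r_k)-\phi(r_{k+1})\ge \phi'(r_k)(r_k-r_{k+1})$ and balancing the perturbation via \cref{lemma:step size}(b) with $\xi\in(\frac{1-\gamma}{3\gamma-1},1)$ tuned to optimize the exponent produces $r_k=\mathcal O(\max\{k^{-(3\gamma-1)},k^{-(1-\gamma)/(2\theta-1)}\})$; the crossover at $\theta=\gamma/(3\gamma-1)$ switches which branch dominates, matching the stated dichotomy. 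A matching upper bound on $\|\Fnor(z^k)\|^{2}$, and hence on $\dist(0,\partial\psi(w^k))^{2}$ via \cref{eq:stat}, follows by rewriting the descent as $\|\Fnor(z^k)\|^{2}\le \tfrac{4(r_k-r_{k+1})}{\tau n\alpha_k}+\tfrac{4\CL n^{2}\bar\sigma^{2}}{\tau}\alpha_k^{2}$, whose right-hand side is of the same order as $r_k$ once the rate above is substituted.

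For the iterate rates, I promote the finite-length bound $\sum_k\|w^{k+1}-w^k\|<\infty$ of \cref{thm:finite-sum} to a quantitative tail estimate. \cref{lem:merit-descent-0} together with \cref{lem:est-err}(a) gives $\tfrac{1}{8n\alpha_k}\|w^{k+1}-w^k\|^{2}\le r_k-r_{k+1}+C\alpha_k^{3}$; coupling this with \cref{lem:KL-mer} and Cauchy--Schwarz yields a discrete Łojasiewicz trajectory estimate of the form
\[ \sum_{j\ge k}\|w^{j+1}-w^j\| \;\le\; C_1\,r_k^{\,1-\tilde\theta} + C_2\Bigl(\,{\textstyle\sum_{j\ge k}}\alpha_j^{3}\Bigr)^{1/2}. \]
Substituting the rate on $r_k$ obtained in the previous step, applying the step-size tail from \cref{lemma:step size}, and using $\|w^k-w^\ast\|\le\sum_{j\ge k}\|w^{j+1}-w^j\|$ delivers the announced iterate rates $\mathcal O(k^{-(2\gamma-1)})$, $\mathcal O(k^{-(1-\theta)(1-\gamma)/(2\theta-1)})$, and $\mathcal O(k^{-1})$ in the respective regimes.

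The main obstacle will be the careful bookkeeping required to balance the KL-driven contraction against the cubic-in-$\alpha_k$ stochastic perturbation simultaneously across the three parameter regimes $(\theta,\gamma)$. In particular, tuning $\xi$ in \cref{lemma:step size}(b) so that the noise term is absorbed by the KL contraction, producing the $\min\{2\gamma,3\gamma-1\}$-type truncation that gives exactly the stated exponents in the small-$\theta$ range, and handling the crossover at $\theta=\gamma/(3\gamma-1)$ where the two branches meet, constitute the technical heart of the argument; the proof for $\gamma=1$, $\theta\in[0,\tfrac12]$ is by comparison the cleanest instance because the linear recursion can be closed directly without invoking the Łojasiewicz desingularizer.
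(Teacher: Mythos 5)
Your overall blueprint (lift the KL inequality to $\mer$ via \cref{lem:KL-mer}, derive a one-step recursion for a Lyapunov quantity built from the merit function, close it with Chung/Polyak-type lemmas, then run the finite-length KL argument for the iterates) is the same as the paper's, but two steps contain genuine gaps. First, you set $r_k:=\mer(z^k)-\psi(w^*)$ and assert $r_k\geq 0$; this is not justified. The descent inequality of \cref{lem:merit-descent-1}~(a) carries the additive error $\CL n^3\alpha_k^3\sigma_k^2$, so $\{\mer(z^k)\}_k$ is not monotone and nothing prevents $\mer(z^k)<\psi(w^*)$ along the way; without $r_k\geq0$ the term $r_k^{2\tilde\theta}$ in your master recursion is not even controlled by $\|\Fnor(z^k)\|^2$ through the KL inequality. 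The paper repairs this by working with $r_k:=\mer(z^k)+u_k-\bar\psi$, $u_k:=\sD\sum_{i\geq k}\alpha_i^3$, which is nonnegative and monotone; note that this correction is not cosmetic, since after Minkowski's inequality the perturbation in the recursion becomes $\alpha_k u_k^{2\vartheta}=\cO(k^{-\gamma-2\vartheta(3\gamma-1)})$ rather than your $\alpha_k^3=\cO(k^{-3\gamma})$, and it is this larger perturbation that produces exactly the stated exponent $3\gamma-1$ (your version would spuriously suggest $\cO(k^{-2\gamma})$).

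Second, and more seriously, your derivation of the last-iterate rate for $\|\Fnor(z^k)\|^2$ does not work. From $\|\Fnor(z^k)\|^2\leq \frac{4(r_k-r_{k+1})}{\tau n\alpha_k}+\cO(\alpha_k^2)$ and $r_k=\cO(k^{-p})$ you can only conclude $r_k-r_{k+1}\leq r_k=\cO(k^{-p})$ pointwise (a decreasing sequence with rate $k^{-p}$ need not have differences of order $k^{-p-1}$ at every index), so dividing by $\alpha_k\sim k^{-\gamma}$ loses a factor $k^{\gamma}$ and yields only $\cO(k^{-(p-\gamma)})$ for every $k$, or the full rate merely along a subsequence. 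Since the theorem asserts the rate for \emph{all} large $k$, an additional stability argument is required: the paper proves the reverse bound $\norm{\Fnor(z^{k+i})}^2\geq \frac18\norm{\Fnor(z^k)}^2-\frac{\sP}{4\varsigma^2}(\sum_{j=0}^{i-1}\alpha_{k+j}^2)^2$ (\cref{lem:in-main-proof}, via Gronwall's inequality) and then sums the descent over a window of iterations with $\frac{\varsigma}{2}\leq\sum_j\alpha_{k+j}\leq\varsigma$ to obtain $\norm{\Fnor(z^k)}^2\lesssim r_k+\alpha_k^2$. This ingredient is entirely absent from your proposal. A related, smaller issue affects your trajectory estimate for $\|w^k-w^*\|$: the tail term should be $\sum_{i\geq k}\alpha_i u_i^{\vartheta}$ (coming from the subadditivity of $[\addes']^{-1}$ and requiring the free choice $\vartheta>\frac{1-\gamma}{3\gamma-1}$ that the paper optimizes to get $2\gamma-1$), not $(\sum_{j\geq k}\alpha_j^3)^{1/2}$; as written your bound would again give a rate strictly better than the one stated, which signals that the estimate cannot be obtained by the Cauchy--Schwarz route you indicate.
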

\begin{remark}
\cref{thm:convergence rate} establishes the convergence rates of the function values and the stationarity measure for $\NRR$. To the best of our knowledge, this is the first time that such rates have been derived for an $\RR$ method in the nonconvex setting and under the KL inequality. Moreover, \cref{thm:finite-sum} and \cref{thm:convergence rate} show strong convergence of the iterates $\{w^k\}_k$, which is the first among proximal-type $\RR$ methods. If $\theta\in[0, \frac12]$, the rate in \cref{thm:convergence rate} aligns with the strongly convex smooth case (cf. \cite{gurbu2019}).
\end{remark}
\begin{remark}
As shown in \cref{coro:compexity-2} and using polynomial step sizes, the iteration complexity of $\NRR$ is given by $\min_{i=1,\dots,k} \, \dist(0,\partial \psi(w^i))^2 =\cO(k^{-(1-\gamma)})$. When $\gamma \in (\frac12,1)$, we clearly have $1-\gamma \leq 3\gamma-1$ and $1-\gamma \leq \frac{1-\gamma}{2\theta-1}$ (if $\theta>\frac{\gamma}{3\gamma-1}$). Consequently, the asymptotic rate derived in \cref{thm:convergence rate} is faster than the complexity bound as long as $\gamma > \frac12$. Furthermore, in stark contrast to \cref{thm:compexity}, \cref{thm:convergence rate} provides last-iterate convergence guarantees.  
\end{remark}

\subsection{\texorpdfstring{Proof of \cref{thm:convergence rate}}{Proof of Theorem 24}}

In this section, we present a detailed proof of \cref{thm:convergence rate}. Upon first reading of the manuscript, the reader may safely skip to \cref{sec:num-exp}. \vspace{1mm}
%
%
\begin{proof}
By \cref{cor:conv}, $\{w^k\}_k$ converges to some stationary point $w^*\in\crit{\psi}$. Let $\theta\in[0,1)$ and $c > 0$ denote the KL exponent and constant of $\psi$ at $w^*$. 
Applying \cref{thm:global_convergence} and the definition of $\Fnor$, we may infer $z^k = w^k - \lambda\nabla f(w^k) + \Fnor(z^k) \to w^*-\lambda\nabla f(w^*) =: z^* \in \mathcal A_z$. Moreover, we notice $\alpha_k \to 0$ and $\mer(z^k) \to \bar \psi:=\psi(w^*)$ (by \cref{lemma:limit point set}) as $k$ tends to infinity. 
%
Hence, there exists $\tilde k \geq 1$ such that $\alpha_k \leq \min\{1, \frac{\bar \alpha}{n}\}$, $|\mer(z^k)-\bar \psi| < 1$, and 
 	\begin{equation}
		\label{eq:adjusted-kl-0}
		  \tilde c\norm{\Fnor(z^k)} \geq |\mer(z^k)-\bar \psi|^{\tilde \theta}, \quad \tilde \theta = \max\{{\textstyle \frac12},\theta\},\quad \text{for all $k \geq \tilde k$},
	\end{equation}
where \cref{eq:adjusted-kl-0} follows from \cref{lem:KL-mer}. Clearly, due to  $|\mer(z^k)-\bar \psi|\leq 1$, \cref{eq:adjusted-kl-0} also holds for every exponent $\vartheta \geq \tilde \theta$. Thus, we may work with the following {\L}ojasiewicz inequality 
	\begin{equation}
		\label{eq:adjusted-kl}
		  \tilde c\norm{\Fnor(z^k)} \geq |\mer(z^k)-\bar \psi|^\vartheta,\quad \vartheta \in [\tilde\theta,1),\quad \text{for all $k \geq \tilde k$}.
	\end{equation}
In the following, we always assume $k \geq \tilde k$. Rearranging the terms in \cref{lem:merit-descent-1} (a), we have
	\begin{equation}
		\label{eq:descent-again}
		r_{k}-r_{k+1} \geq \frac{\tau n \alpha_k}{4}\norm{\Fnor(z^k)}^2\quad \text{where} \quad r_k:=\mer(z^k)+u_k-\bar \psi, \quad u_k:=\sD\,{\sum}_{i=k}^\infty \alpha_i^3,
	\end{equation}
 and $\sD:= n^3\Delta(n^3{\textstyle \sum}_{i=1}^{\infty}\alpha_i^3) < \infty$. Due to $\mer(z^k) \to \bar\psi$, $u_k\to0$, and \cref{eq:descent-again}, the sequence $\{r_k\}_k$ monotonically decreases to $0$ and it holds that $r_k \geq 0$. \\[1mm]
%
\noindent \textbf{Step 1: Rate for $\{r_k\}_k$}. We first establish a rate for $\{r_k\}_k$ through which we can easily derive the rates for $\{|\psi(w^k)-\bar \psi|\}_k$ and $\{\|\Fnor(z^k)\|^2\}_k$. 
	 Combining \cref{eq:adjusted-kl} and \cref{eq:descent-again}, it follows
  %
	 \begin{align*}
	    r_k-r_{k+1} &\geq \frac{\tau n \alpha_k}{4\tilde c^2}|\mer(z^k)-\bar \psi|^{2\vartheta}=\frac{\tau n \alpha_k}{4\tilde c^2}(|\mer(z^k)-\bar \psi|^{2\vartheta} + u_k^{2\vartheta}) - \frac{\tau n\alpha_k }{4\tilde c^2}u_k^{2\vartheta}
     \\[1mm]&\geq \frac{\tau n \alpha_k}{8\tilde c^2}|\mer(z^k)-\bar \psi + u_k|^{2\vartheta} - \frac{\tau n\alpha_k }{4\tilde c^2}u_k^{2\vartheta}= \frac{\tau n \alpha_k}{8\tilde c^2}r_k^{2\vartheta} - \frac{\tau n \alpha_k}{4\tilde c^2 }u_k^{2\vartheta},
     \end{align*}
	where the last line is due to Minkowski's inequality, i.e., $|a|^{2\vartheta} + |b|^{2\vartheta} \geq |a + b|^{2\vartheta}/2$ for all $\vartheta \in[\frac12,1)$, $a,b \in \R$. For simplicity, we limit our discussion to the case $\beta=0$. Substituting $\alpha_k = \frac{\alpha}{k^\gamma}$ and using \cref{lemma:step size} (a) and $\sD^{2\vartheta} \leq \max\{1,\sD\}^{2\vartheta}\leq \max\{1,\sD^2\}$, we obtain
	 \begin{equation}
	 	\label{eq:recursion-r}
	 	r_{k+1} \leq r_k - \frac{\tau n \alpha}{8\tilde c^2} \frac{r_k^{2\vartheta}}{k^\gamma} +  \frac{\sE}{k^{(1+6\vartheta)\gamma-2\vartheta}} \quad \text{where}\quad \sE := \frac{\tau n \alpha a_\gamma }{4\tilde c^2}\cdot \max\{1,\sD^2\}.
	 \end{equation}
	  In the following, we provide the convergence rates of $\{r_k\}_k$ based on different exponents $\vartheta$. 

Our derivations are based on classical convergence results for sequences of numbers shown in \cite[Lemma 4 and 5]{Pol87}. For ease of exposition, we state those results in \cref{lemma:rate}.

\begin{lemma} \label[lemma]{lemma:rate} Let $\{y_k\}_{k} \subseteq \R_+$ and $b \geq 0$, $d, p, q > 0$, $s \in (0,1)$, $t > s$ be given. 
	\begin{enumerate} [label=\textup{(\alph*)},topsep=0pt,itemsep=0ex,partopsep=0ex]
		\item Suppose that $\{y_k\}_{k}$ satisfies 
		\[ 
		y_{k+1} \leq \left(1- \frac{q}{k+b} \right) y_k + \frac{d}{(k+b)^{p+1}}, \quad \forall \ k\geq 1.
		\]
		If $q > p$, it holds that $ y_k \leq \frac{d}{q-p}\cdot (k+b)^{-p} + o((k+b)^{-p})$ for all sufficiently large $k$. 
		\item Let the sequence $\{y_k\}_{k}$ be given with $y_{k+1} \leq (1- \frac{q}{(k+b)^s} ) y_k + \frac{d}{(k+b)^{t}}$ for all $k \geq 1$. Then, it follows $y_k \leq \frac{d}{q} \cdot (k+b)^{s-t} + o((k+b)^{s-t})$.
	\end{enumerate}
\end{lemma}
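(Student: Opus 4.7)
The plan is to establish both parts of \cref{lemma:rate} by a rescaling argument that reduces each to the analysis of a sequence converging to an explicit positive constant. In each case, I would first introduce a rescaled variable whose anticipated limit is precisely the desired leading coefficient, then prove boundedness of the rescaled sequence, and finally apply a classical perturbation argument based on the divergence of $\sum 1/(k+b)$ (respectively $\sum 1/(k+b)^s$) to close the gap between the upper envelope of the rescaled iterate and its limit.

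For part (a), set $z_k := y_k(k+b)^p$. Multiplying the hypothesised recursion by $(k+b+1)^p$ and Taylor-expanding $(1+1/(k+b))^p = 1 + p/(k+b) + O(1/(k+b)^2)$ produces
\[ z_{k+1} \leq \left[1 - \frac{q-p}{k+b}\right]z_k + \frac{d}{k+b} + O\!\left(\frac{z_k+d}{(k+b)^2}\right). \]
The constant $z^* := d/(q-p)$ is a near-fixed-point: one readily checks $(1 - (q-p)/(k+b))\,z^* + d/(k+b) = z^*$. The argument then has two steps. First, boundedness of $\{z_k\}$: if $z_k \geq z^* + \varepsilon$ for some $\varepsilon > 0$, the recursion gives $z_{k+1} - z_k \leq -(q-p)\varepsilon/(k+b) + O(1/(k+b)^2)$, which, combined with $\sum 1/(k+b) = \infty$, forces $z_k$ eventually into any $\varepsilon$-neighborhood of $z^*$. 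Second, convergence: set $u_k := z_k - z^*$ to obtain $u_{k+1} \leq (1 - (q-p)/(k+b))u_k + O(1/(k+b)^2)$; since the ratio of perturbation to drift is $O(1/(k+b)) \to 0$, a classical Robbins-Siegmund-type lemma delivers $u_k \to 0$. Unrescaling yields $y_k = (d/(q-p))(k+b)^{-p} + o((k+b)^{-p})$.

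For part (b), the appropriate rescaling is $z_k := y_k(k+b)^{t-s}$. The key observation is the \emph{exact} fixed-point identity $(1-q/(k+b)^s)(d/q) + d/(k+b)^s = d/q$, so $z^* := d/q$ is a genuine fixed point to leading order, and the induced rescaled inequality reads
\[ z_{k+1} \leq \left[1 - \frac{q}{(k+b)^s}\right]z_k + \frac{d}{(k+b)^s} + O\!\left(\frac{z_k}{k+b}\right), \]
with the remainder coming from the expansion of $(1+1/(k+b))^{t-s}$. The same two-step procedure applies: first bound $\{z_k\}$ using the dominant drift $-q/(k+b)^s$, which strictly dominates the perturbation $O(1/(k+b))$ because $s < 1$; then set $u_k := z_k - z^*$ and obtain $u_{k+1} \leq (1 - q/(k+b)^s)u_k + O(1/(k+b))$. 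Here the ratio of perturbation to drift is $O(1/(k+b)^{1-s}) \to 0$, so the Robbins-Siegmund-type argument again gives $u_k \to 0$ and hence $y_k = (d/q)(k+b)^{s-t} + o((k+b)^{s-t})$.

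The main obstacle in both parts will be rigorous control of the $O(\cdot)$ remainders from the two Taylor expansions, together with verifying that they decay strictly faster than the leading scale; this is exactly what is needed for the final error to be $o(\cdot)$ rather than only $O(\cdot)$. A secondary delicate point is the preliminary boundedness step: it relies in an essential way on the strict inequalities $q > p$ in (a) and $t > s$ combined with $s \in (0,1)$ in (b), since otherwise the drift would be too weak to counteract the perturbation and $\{z_k\}$ could fail to remain in a neighborhood of its candidate limit.
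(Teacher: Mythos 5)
Your proposal is essentially correct, but note that the paper does not actually prove this lemma at all: it is stated as a restatement of classical results and the paper simply cites \cite[Lemma 4 and 5]{Pol87}, so there is no in-paper argument to match against. What you have written is, in substance, the standard textbook proof of those results (and is close to how Polyak himself proves them): rescale by the target rate, identify the (near-)fixed point of the rescaled recursion, prove boundedness, and then drive the excess to zero using $\sum_k a_k=\infty$ together with the vanishing perturbation-to-drift ratio $b_k/a_k\to 0$. Two points deserve tightening. First, in the boundedness step of part (a) your displayed remainder is $O((z_k+d)/(k+b)^2)$, i.e.\ it still contains $z_k$; the clean way to proceed is to observe that the \emph{net} coefficient of $z_k$ is $1-\frac{q-p}{k+b}+O(\frac{1}{(k+b)^2})\le 1-\frac{q-p}{2(k+b)}<1$ for all large $k$, which yields $z_{k+1}\le (1-\frac{q-p}{2(k+b)})z_k+\frac{d'}{k+b}$ and hence boundedness by a one-line monotonicity argument; the phrasing ``if $z_k\ge z^*+\varepsilon$ the drift is negative'' tacitly presupposes control of the $z_k$-dependent remainder. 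Second, the final step is not Robbins--Siegmund (that is a stochastic supermartingale theorem); what you need is the elementary deterministic comparison: if $v_{k+1}\le(1-a_k)v_k+b_k$ with $v_k\ge 0$, $a_k\in(0,1]$ eventually, $\sum_k a_k=\infty$ and $b_k/a_k\to0$, then $v_k\to0$ (proof: $v_{k+1}-\varepsilon\le(1-a_k)(v_k-\varepsilon)$ once $b_k\le\varepsilon a_k$). Since $u_k=z_k-z^*$ may be negative while only $\limsup_k u_k\le 0$ is needed, apply this to $v_k=\max\{u_k,0\}$. With these two repairs the argument is complete and delivers exactly the stated leading constants $\frac{d}{q-p}$ and $\frac{d}{q}$; the exact fixed-point identity you isolate in part (b) is correct and is what makes that case slightly cleaner than part (a).
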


\noindent Let us now continue with step 1 and with the proof of \cref{thm:convergence rate}. \\[1mm]
\noindent\textbf{Case 1:} $\vartheta =\frac12$. In this case, the estimate \cref{eq:recursion-r} simplifies to
\[		r_{k+1} \leq \left[1-\frac{\tau n \alpha}{8\tilde c^2} \frac{1}{k^\gamma} \right]r_{k} +  \frac{\sE}{k^{4\gamma-1}}.
\]
	If $\gamma < 1$, the rate is $r_k = \cO(k^{-(3\gamma-1)})$ by \Cref{lemma:rate} (b). Moreover, if $\gamma=1$ and $\alpha > \frac{16\tilde c^2}{\tau n}$, then \Cref{lemma:rate} (a) yields $r_k = \mathcal O(k^{-2})$. \\[1mm]
\noindent \textbf{Case 2:} $\vartheta \in (\frac12,1)$, $\gamma \neq 1$. In this case, the mapping $x \mapsto h_\vartheta(x):= x^{2\vartheta}$ is convex for all $x>0$ and we have
	\begin{equation}
		\label{eq:convexity-h}
		h_\vartheta(y) \geq  h_\vartheta(x) + h_\vartheta^\prime(x)(y-x) = 2\vartheta x^{2\vartheta-1} y +  (1 - 2\vartheta )h_\vartheta(x)  \quad \forall~x,y>0.
	\end{equation}
	Our next step is to reformulate the recursion \cref{eq:recursion-r} into a suitable form so that \cref{lemma:rate} is applicable. To that end, in \cref{eq:convexity-h}, we set $x = \bar c k^{-\sigma}$ and $y = r_k$, where $\bar{c} := ( \frac{8\tilde c^2\sigma}{\tau \alpha n\vartheta})^{{1}/({2\vartheta-1})}$ and $\sigma:=\min\{\frac{1-\gamma}{2\vartheta-1}, 3\gamma-1\}$. Then, it follows
  $r_k^{2\vartheta} \geq \frac{16\tilde c^2\sigma}{\tau \alpha n} \frac{r_k}{k^{(2\vartheta-1)\sigma}} + \frac{(1-2\vartheta)\bar{c}^{2\vartheta}}{k^{2\vartheta\sigma}}$. 
  Using this bound in \cref{eq:recursion-r}, we obtain
	 \[
	 r_{k+1} \leq \left[1- \frac{2\sigma}{k^{\gamma+(2\vartheta-1)\sigma}} \right]r_k + \frac{\sE}{k^{(1+6\vartheta)\gamma-2\vartheta}} + \frac{(2\vartheta-1)\tau n \alpha \bar{c}^{2\vartheta}}{8\tilde c^2}\frac{1}{k^{\gamma+2\vartheta\sigma}}.
	 \]
	Noticing $\gamma+2\vartheta \sigma \leq \gamma + 2\vartheta(3\gamma-1) = (1+6\vartheta)\gamma-2\vartheta$ (by definition of $\sigma$), there exists $\hat c > 0$ such that  
	\[
	 r_{k+1} \leq \left[1- \frac{2\sigma}{k^{\gamma+(2\vartheta-1)\sigma}} \right]r_k + \frac{\hat{c}}{k^{\gamma+2\vartheta\sigma}}.
	 \]
	 \cref{lemma:rate} then yields $r_k=\cO(k^{-\sigma})$. Since the parameter $\sigma$ is determined by the adjusted KL exponent $\vartheta \in [\max\{\tfrac12,\theta\} , 1)$, to maximize $\sigma$, we shall always choose $\vartheta = \theta$ when $\theta >\half$. On the other hand, if $\theta\in[0,\half]$, we set $\vartheta=\half$ and the results in \textbf{Case 1} apply.
	
	Therefore, we can express the rate of $\{r_k\}_k$ in terms of the original KL exponent $\theta$ and the step size parameter $\gamma$:
	\begin{align*}
		r_k = \cO(k^{-R(\theta,\gamma)})\quad \text{where}\quad R(\theta,\gamma) := \begin{cases}
			3\gamma-1 &\text{if}\; \theta \in [0,\frac{\gamma}{3\gamma-1}]\\[2mm]
			\frac{1-\gamma}{2\theta-1} &\text{if}\; \theta \in (\frac{\gamma}{3\gamma-1},1)
		\end{cases}\quad \text{when} \quad \gamma \in (\textstyle\frac12,1),
	\end{align*}
and $R(\theta,\gamma):=2$ when $\gamma=1, \theta \in [0,\half]$ and $\alpha > \frac{16\tilde c^2}{\tau n}$. \\[2mm]
\noindent \textbf{Step 2: Rate for $\{\norm{\Fnor(z^k)}^2\}_k$}. Based on our discussion of $\{r_k\}_k$, we now compute the rate for $\{\norm{\Fnor(z^k)}^2\}_k$ using the sufficient descent property \cref{eq:descent-again} and \Cref{lem:in-main-proof}: 

\begin{lemma}\label[lemma]{lem:in-main-proof}
Assume \ref{A1}--\ref{A3} and let $\{w^k\}_k$ and $\{z^k\}_k$ be generated by $\NRR$ with $\lambda\in(0,\frac{1}{\rho})$. Let $\varsigma > 0$ be given such that
\[ \sQ\varsigma\exp(\sQ\varsigma) \leq \tfrac{1}{2} \quad \text{where} \quad \sQ := (1-\lambda\rho)^{-1}(\sL+2\lambda^{-1}-\rho)n\max\{1,\sqrt{\CL(2\sL+1)} n\}  \]
and $\CL > 0$ is introduced in \cref{lem:est-err}. Consider the following additional assumptions:
\begin{itemize}
	\item There exists ${\sf P}>0$ such that we have $\max\{\|\Fnor(z^k)\|^2,\psi(w^k) - \psilb\} \leq {\sf P}$ for all $k \geq 1$.
	\item For $k \geq 1$, there exists $i = i(k) \geq 1$ such that $\sum_{j=0}^{i-1} \alpha_{k+j} \leq \varsigma$. 
\end{itemize}
Then, the relation 
$\norm{\Fnor(z^{k+i})}^2 \geq \frac18\norm{\Fnor(z^k)}^2 - \frac{\sP}{4\varsigma^2}({\sum}_{j=0}^{i-1}\;\alpha_{k+j}^2)^2$ holds.
\end{lemma}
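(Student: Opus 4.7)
The plan is to bound $\|\Fnor(z^{k+i})-\Fnor(z^k)\|$ by $\tfrac12\|\Fnor(z^k)\|$ plus a controllable error, and then to convert this into the claimed lower bound on $\|\Fnor(z^{k+i})\|^2$ via a sharp elementary inequality. The key structural fact is that $\Fnor$ is Lipschitz with modulus $\sQ_0:=(1-\lambda\rho)^{-1}(\sL+2\lambda^{-1}-\rho)$ --- this follows from \ref{A1}, the $(1-\lambda\rho)^{-1}$-Lipschitz continuity of $\proxl$, and the definition of $\Fnor$. Hence $\|\Fnor(z^{k+i})-\Fnor(z^k)\| \leq \sQ_0\, r_i$ with $r_j := \|z^{k+j}-z^k\|$, and it suffices to run a Gronwall-type argument on $\{r_j\}_j$.

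Using the compact update \cref{eq:update-z}, the triangle inequality, and $\|\Fnor(z^{k+j})\| \leq \|\Fnor(z^k)\| + \sQ_0 r_j$, one gets
\[ r_{j+1} \leq (1+n\sQ_0\alpha_{k+j})\,r_j + n\alpha_{k+j}\|\Fnor(z^k)\| + \|e^{k+j}\|. \]
The hypotheses $\|\Fnor(z^{k+j})\|^2,\,\psi(w^{k+j})-\psilb \leq \sP$ combined with \cref{lem:var-bound} imply $\sigma_{k+j}^2 \leq 2\sL\sP$, and then \cref{lem:est-err}~(a) yields $\|e^{k+j}\| \leq \sqrt{\CL(2\sL+1)\sP}\,n^2\alpha_{k+j}^2$. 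Iterating the recursion from $r_0=0$, bounding products by exponentials, and using $\sum_j\alpha_{k+j}\leq\varsigma$, this gives
\[ \sQ_0\, r_i \leq \exp(n\sQ_0\varsigma)\Big[\,n\sQ_0\varsigma\,\|\Fnor(z^k)\| + n^2\sQ_0\sqrt{\CL(2\sL+1)\sP}\sum\nolimits_{j=0}^{i-1}\alpha_{k+j}^2\,\Big]. \]
The definition $\sQ = n\sQ_0 \max\{1,\sqrt{\CL(2\sL+1)}\,n\}$ is crafted so that both $n\sQ_0 \leq \sQ$ and $n^2\sQ_0\sqrt{\CL(2\sL+1)} \leq \sQ$ hold; absorbing both coefficients and applying the hypothesis $\sQ\varsigma\exp(\sQ\varsigma)\leq 1/2$ (together with its equivalent form $\sQ\exp(\sQ\varsigma) \leq (2\varsigma)^{-1}$) produces
\[ \|\Fnor(z^{k+i})-\Fnor(z^k)\| \leq \tfrac12\,\|\Fnor(z^k)\| + \tfrac{\sqrt{\sP}}{2\varsigma}\sum\nolimits_j\alpha_{k+j}^2. \]
By the reverse triangle inequality, $\|\Fnor(z^{k+i})\| \geq \tfrac12\|\Fnor(z^k)\| - \tfrac{\sqrt{\sP}}{2\varsigma}\sum_j\alpha_{k+j}^2$. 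When this right-hand side is nonnegative, squaring and applying $(a-b)^2 \geq \tfrac12 a^2 - b^2$ with $a=\tfrac12\|\Fnor(z^k)\|$ and $b=\tfrac{\sqrt{\sP}}{2\varsigma}\sum_j\alpha_{k+j}^2$ yields the claim; otherwise, $\tfrac14\|\Fnor(z^k)\|^2 < \tfrac{\sP(\sum_j\alpha_{k+j}^2)^2}{4\varsigma^2}$, so the target lower bound is trivially nonpositive.

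The main obstacle is the constant-tracking in the Gronwall step: the linear recursion itself is mechanical, but aligning the two distinct contributions $n\sQ_0\varsigma\exp(n\sQ_0\varsigma)$ and $n^2\sQ_0\sqrt{\CL(2\sL+1)}\exp(n\sQ_0\varsigma)$ so that both are simultaneously dominated by the single scalar $\sQ\varsigma\exp(\sQ\varsigma)$ is precisely what motivates the $\max\{1,\sqrt{\CL(2\sL+1)}\,n\}$ factor in the definition of $\sQ$ --- without this coupling, the argument would require two separate smallness conditions on $\varsigma$.
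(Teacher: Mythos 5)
Your proposal is correct and follows essentially the same route as the paper: Lipschitz continuity of $\Fnor$ via the prox-Lipschitz bound, the error estimate $\|e^{k+j}\|\leq\sqrt{\CL(2\sL+1)\sP}\,n^2\alpha_{k+j}^2$ from \cref{lem:est-err} and \cref{lem:var-bound}, a discrete Gronwall argument over the window, and the smallness condition $\sQ\varsigma\exp(\sQ\varsigma)\leq\tfrac12$ to absorb the drift. The only (immaterial) differences are that you run Gronwall on $\|z^{k+j}-z^k\|$ rather than on $\|\Fnor(z^{k+j})-\Fnor(z^k)\|$, and you finish with the reverse triangle inequality plus $(a-b)^2\geq\tfrac12a^2-b^2$ and a sign case distinction, whereas the paper rearranges first and applies $a^2+b^2\geq\tfrac12(a+b)^2$, avoiding the case split.
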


A proof of the auxiliary results in \cref{lem:in-main-proof} is presented in \cref{proof:lem:in-main-proof}. The first condition, $\max\{\|\Fnor(z^k)\|^2,\psi(w^k) - \psilb\} \leq {\sP}$, in \Cref{lem:in-main-proof} is guaranteed by the convergence of $\{\psi(w^k)\}_k$ and $\{\Fnor(z^k)\}_k$ (cf. \Cref{thm:global_convergence}). The conditions $\alpha_k \to 0$ and $\sum_{k=0}^\infty \alpha_k=\infty$ imply that for every $k$ sufficiently large, there exists an integer $t = t(k) \geq 1$ such that $\frac{\varsigma}{2} \leq \sum_{j=0}^{t-1} \alpha_{k+j} \leq \varsigma$. Hence, the requirements in \cref{lem:in-main-proof} are satisfied for all $k$ sufficiently large and all $i = 1,\dots,t(k)$, i.e., we have $\norm{\Fnor(z^{k+i})}^2 \geq \frac18\norm{\Fnor(z^k)}^2 - \frac{\sP}{4\varsigma^2}({\sum}_{j=0}^{i-1}\;\alpha_{k+j}^2)^2$ for all $1 \leq i \leq t(k)$. Noting $t = t(k)$ and summing \cref{eq:descent-again} for $k,k+1,\dots,k+t$, this yields
\begingroup
\allowdisplaybreaks
\begin{align*}
	r_k \geq r_k - r_{k+t} &\geq \frac{\tau n }{4} {\sum}_{i=0}^{t-1}\; \alpha_{k+i}\norm{\Fnor(z^{k+i})}^2 \\ &\geq \frac{\tau n }{32} \big[{\sum}_{i=0}^{t-1}\; \alpha_{k+i}\big] \norm{\Fnor(z^{k})}^2 - \frac{\sP\tau n}{16\varsigma^2} {\sum}_{i=1}^{t-1}\; \alpha_{k+i}\big [{\sum}_{j=0}^{i-1}\;\alpha_{k+j}^2\big]^2.
\end{align*}
\endgroup
Since $\frac{\varsigma}{2} \leq \sum_{j=0}^{t-1} \alpha_{k+j} \leq \varsigma$ and $\{\alpha_k\}_k$ is monotonically decreasing, we further obtain
\begin{align*}
	\frac{\tau n \varsigma}{64}\norm{\Fnor(z^{k})}^2 &\leq \frac{\tau n }{32}\big[{\sum}_{i=0}^{t-1}\; \alpha_{k+i}\big] \norm{\Fnor(z^{k})}^2 \leq r_k + \frac{\sP\tau n}{16\varsigma^2} {\sum}_{i=1}^{t-1}\; \alpha_{k+i}\big[{\sum}_{j=0}^{i-1}\;\alpha_{k+j}^2\big]^2\\
	 &\leq r_k +  \frac{\sP\tau n}{16\varsigma^2} {\sum}_{i=1}^{t-1}\; \alpha_{k+i}\big[\alpha_k{\sum}_{j=0}^{t-1}\;\alpha_{k+j}\big]^2 \leq r_k + \frac{\sP\tau n \varsigma}{16} \alpha_k^2 = \cO(k^{-R(\theta,\gamma)}).
\end{align*}
The last line is due to $\alpha_k^2=\cO(k^{-2\gamma})$ and $2\gamma \geq R(\theta,\gamma)$. \\[1mm]
\noindent \textbf{Step 3: Rate for $\{\psi(w^k)\}_k$}. Recalling $r_k=\mer(z^k)+u_k-\bar \psi=\psi(w^k)-\bar \psi+\frac{\tau\lambda}{2}\norm{\Fnor(z^k)}^2+u_k$ and invoking the triangle inequality, it follows
\begin{align*}
 |\psi(w^k)-\bar \psi| = \left|r_k - \tfrac{\tau\lambda}{2}\norm{\Fnor(z^k)}^2 - u_k \right| \leq |r_k| + \tfrac{\tau\lambda}{2}\norm{\Fnor(z^k)}^2 + u_k = \cO(k^{-R(\theta,\gamma)}),
\end{align*}
where the last equality holds due to $u_k= \cO(\sum_{j=k}^\infty  \alpha_j^3)=\cO(k^{-3\gamma+1})$ (cf. \cref{lemma:step size} (a)) and $3\gamma-1 \geq  R(\theta,\gamma)$. \\[1mm]
\noindent The last step of the proof, i.e., the derivation of the rate of convergence of the iterates $\{w^k\}_k$, can be found in \cref{proof:step-4}. \end{proof}

\section{Numerical Experiments}\label{sec:num-exp}
In this section, we compare $\NRR$ with two prevalent proximal stochastic algorithms; the standard proximal stochastic gradient ($\PSGD$, \cite{duchi2009efficient}) and the epoch-wise proximal random reshuffling method ($\EPRR$, \cite{mishchenko2022proximal}). In the experiments, we typically evaluate the performance of the tested algorithms based on the following two criteria: (i) The \emph{relative error} is defined as ${(\psi(w^k)-\psi_{\min})}/{\max\{1,\psi_{\min}\}}$, where $\psi_{\min}$ is the smallest function value among all generated iterations of the algorithms; (ii) the \emph{natural residual} $\mathcal G_1(w):= w - \prox{\vp}(w - \nabla f(w))$. Similar to $\NRR$ and $\EPRR$, we will keep the step size $\alpha_k$ fixed in each epoch when applying $\PSGD$. 

\begin{figure}[t]
\centering
 	\includegraphics[width=8.0cm]{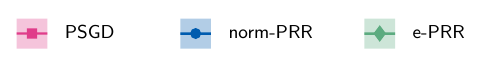} \vspace{-1ex}

 	\hspace*{-2ex}
 	\begin{tikzpicture}[scale=1]
 	\node[right] at (0.0,0) {\includegraphics[width=4.8cm,trim=1cm 0cm 0cm 0cm,clip]{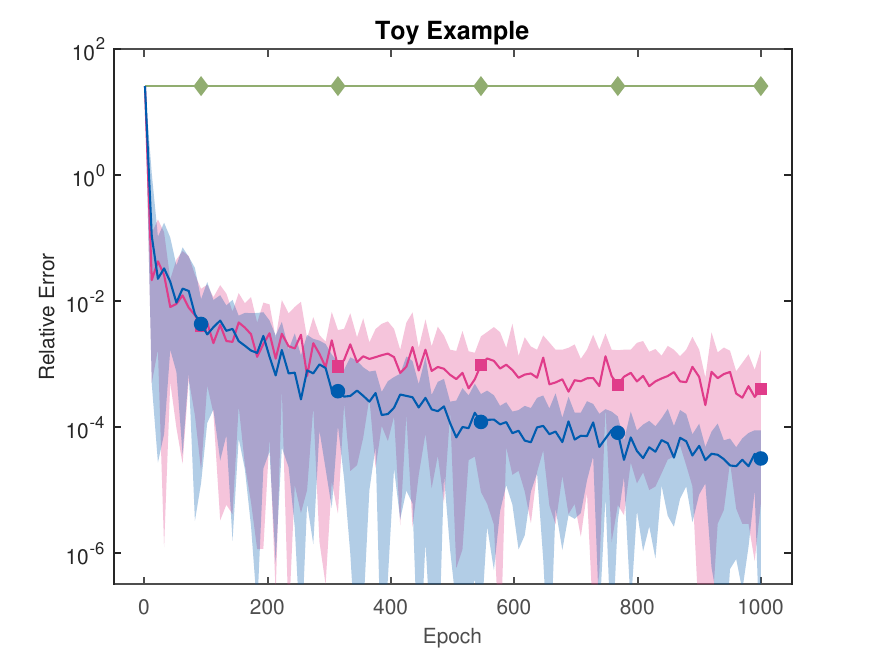}};
 	\node[right] at (4.9,0) {\includegraphics[width=4.8cm,trim=1cm 0cm 0cm 0cm,clip]{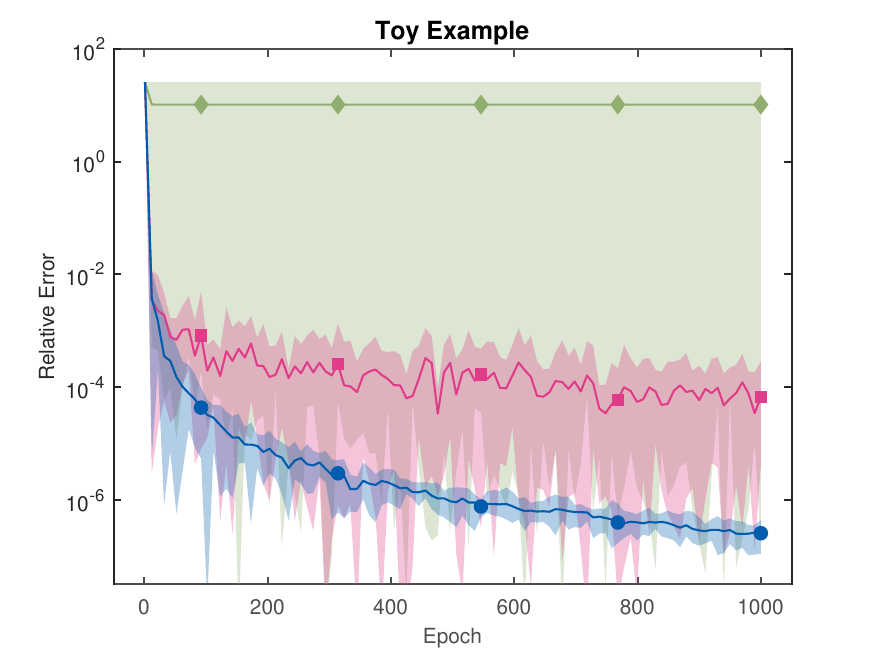}};
 	\node[right] at (9.8,0) {\includegraphics[width=4.8cm,trim=1cm 0cm 0cm 0cm,clip]{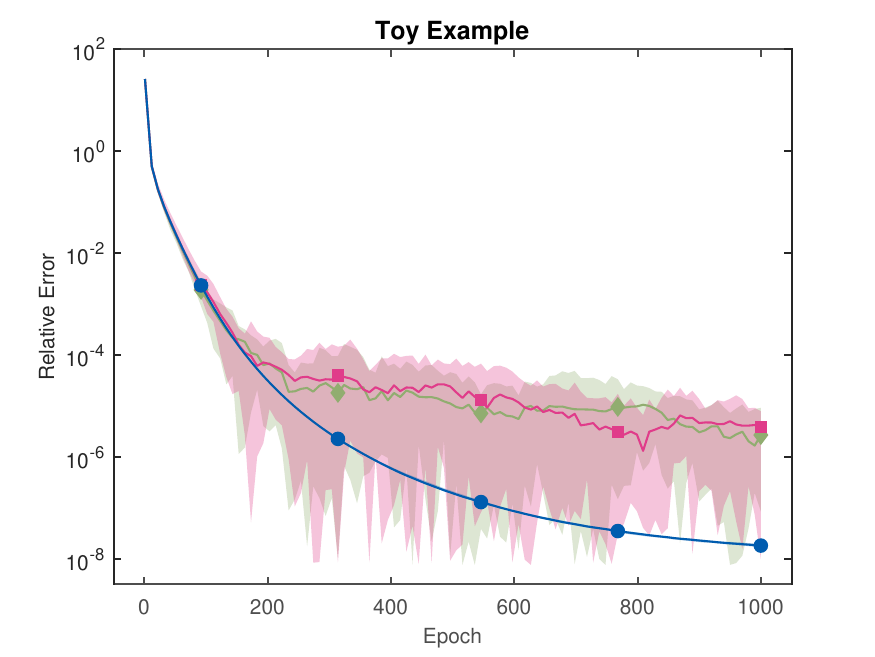}};
 	\node[right] at (0.0,-4.2) {\includegraphics[width=4.8cm,trim=1cm 0cm 0cm 0cm,clip]{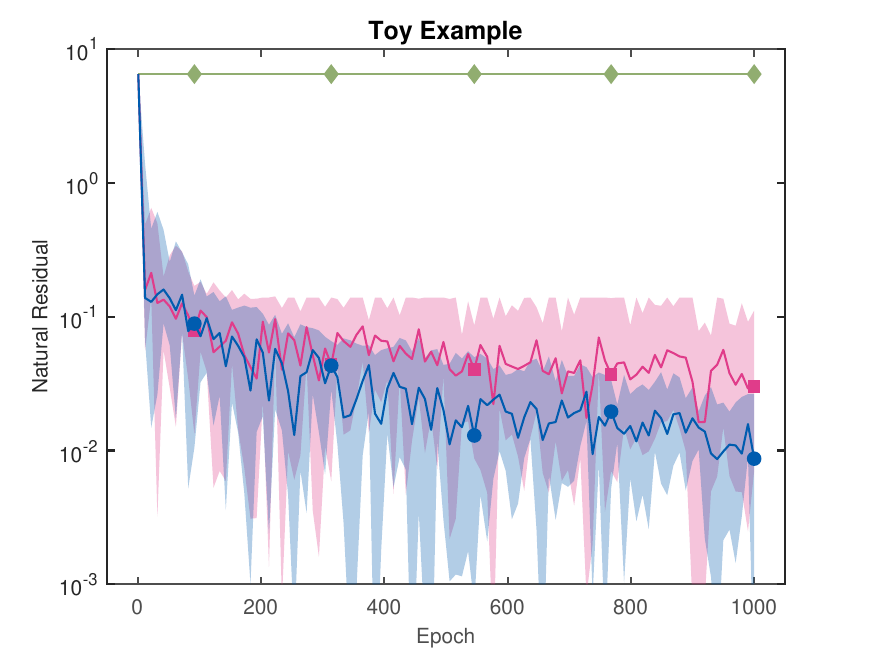}};
 	\node[right] at (4.9,-4.2) {\includegraphics[width=4.8cm,trim=1cm 0cm 0cm 0cm,clip]{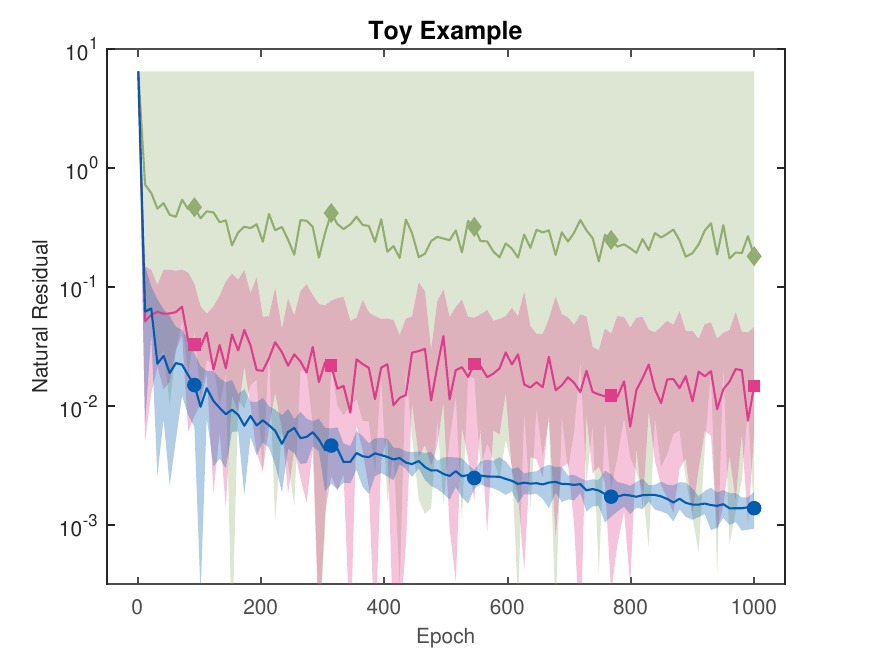}};
 	\node[right] at (9.8,-4.2) {\includegraphics[width=4.8cm,trim=1cm 0cm 0cm 0cm,clip]{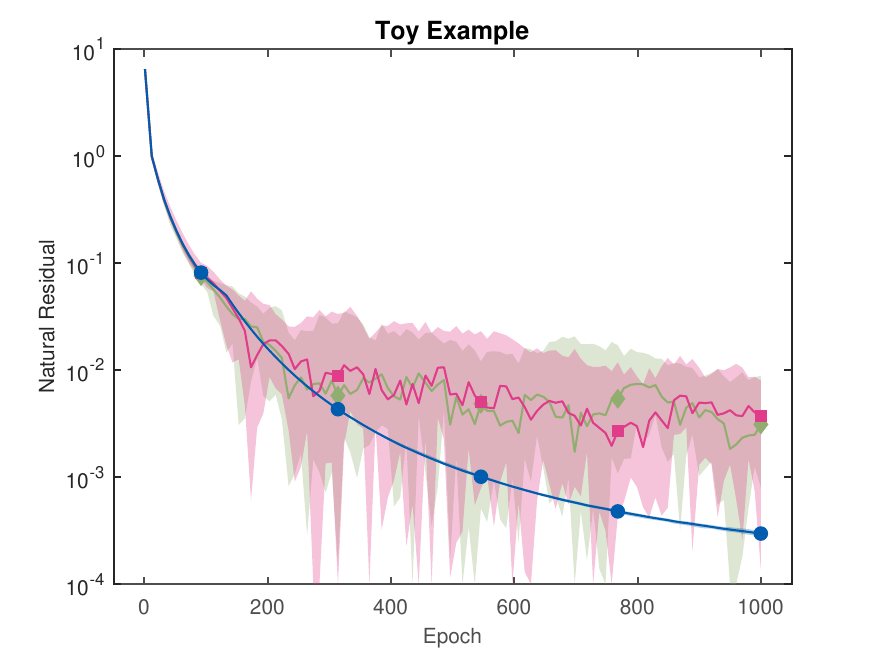}};
 	\node[right] at (1.6,-6.5) {{\footnotesize(a)~\texttt{$\alpha=1$}}};
 	\node[right] at (6.4,-6.5) {{\footnotesize(b)~\texttt{$\alpha=0.1$}}};
 	\node[right] at (11.2,-6.5) {{\footnotesize(c)~\texttt{$\alpha=0.01$}}};
 	\node at (14.8,0) {\rotatebox{-90}{{\footnotesize Relative error}}};
 	\node at (14.8,-4) {\rotatebox{-90}{{\footnotesize Natural residual}}};
 \end{tikzpicture}
 	\caption{Performance of $\PSGD$, $\EPRR$, and $\NRR$ on the toy example \cref{eq:exp-1-1}. We report 10 independent runs; the average performance is shown using a thicker line style.}
 	\label{fig:1}
 \end{figure}
 
\subsection{A Toy Example: Testing feasibility}\label{subsec:exp-1}
We consider a one-dimensional toy example with smooth component functions $\R \ni w \mapsto f(w,i)$ that are not well-defined for $w \notin \dom{\vp}$:
\begin{equation}
 	\label{eq:exp-1-1}
 	\begin{aligned}
 		\min_{w\in\R}~\frac{1}{n}{\sum}_{i=1}^{n}f(w,i) + \vp(w) :=  \frac{1}{100}{\sum}_{i=1}^{100}\frac{\sin(\frac{i\pi}{100}) w^2+\log^2(w+\frac{i}{10})}{2} + \iota_{\R_+}(w),
 	\end{aligned}
 \end{equation}
where $\iota_{\R_+}$ denotes the indicator function of $\R_+$. In this case, the proximity operator reduces to the projection onto the half space $\R_+$. Notice that the function $f$ is not well-defined for $w\leq -\frac{1}{10}$. Hence, once we detect iterates $w^k$ with $w^k \leq -\frac{1}{10}$, we stop the tested algorithm and mark such run as ``failed''. (Due to the projection steps in $\PSGD$ and $\NRR$, failed runs can only occur when testing $\EPRR$). \\[1mm] 
\noindent \textbf{Implementation details.} We set $w^0=10$, $\lambda=1$, and use diminishing step sizes of the form $\alpha_k=\alpha/k$. Here, $\alpha$ takes values from the set $\{1,0.1,0.01\}$. We conduct 10 independent run of each algorithm and depict their overall performance in \Cref{fig:1}.

In \Cref{fig:1} (a), when $\alpha=1$, the success rate of $\EPRR$ is $0\%$, i.e., every run of $\EPRR$ returns invalid values. Both $\PSGD$ and $\NRR$ show larger fluctuations. In \Cref{fig:1} (b), when $\alpha=0.1$, the success rate of $\EPRR$ increases to $40\%$. $\NRR$ converges faster with less oscillations compared to the other two algorithms. In \Cref{fig:1} (c), when $\alpha=0.01$, the success rate of $\EPRR$ is $100\%$; its performance is similar to $\PSGD$. As expected, both $\PSGD$ and $\NRR$ are not affected by the potential infeasibility $\dom{\vp} \subseteq \dom{f} \neq \R$. 

\subsection{Linear Convergence and Interpolation} \label{sec:toy-interpolation}

\begin{figure}[t]
\centering
	\setlength{\abovecaptionskip}{-3pt plus 3pt minus 0pt}
	\setlength{\belowcaptionskip}{-10pt plus 3pt minus 0pt}
	\centering
	\includegraphics[width=8.0cm]{figs//legend.pdf} \vspace{-1ex}

 	\hspace*{-2ex}
	\begin{tikzpicture}[scale=1]
    \node[right] at (0.0,0) {\includegraphics[width=6.8cm,trim=0.45cm 0cm 0cm 0cm,clip]{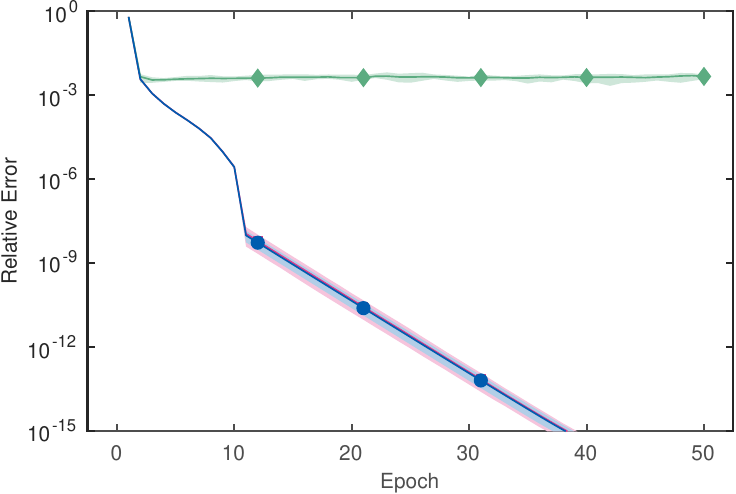}};
    \node[right] at (7.3,0) {\includegraphics[width=6.8cm,trim=0.45cm 0cm 0cm 0cm,clip]{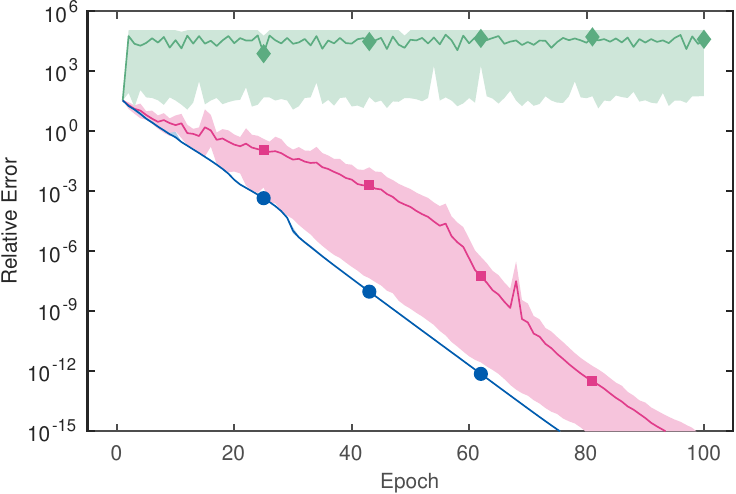}};
     \node at (14.8,0) {\rotatebox{-90}{{\footnotesize Relative error}}};
\end{tikzpicture}
	\caption{This plot illustrates that $\NRR$ (and $\PSGD$) can achieve linear convergence when $\sigma_*^2=0$, whereas $\EPRR$ converges to a neighborhood of $w^*$. Left: $A$ in \cref{eq:prob-toy} is generated using a uniform distribution and all methods use $\alpha_k = \frac{4}{\sL n}$. Right: $A$ follows a Student's $t$ distribution and we apply the step size $\alpha_k = \frac{0.04}{\sL n}$. We depict the relative error $(\psi(w^k)-\psi(w^*))/\max\{1,\psi(w^*)\} = \psi(w^k)$ of 10 independent runs. The average performance is shown using a thicker line style. }
 \label{fig:linear}
\end{figure}

We now numerically illustrate the convergence results obtained in \cref{sec:pl} and study convergence of $\NRR$ in the interpolation setting as discussed in \cref{remark:pl-1,remark:pl-2}. We consider the problem
\begin{equation} \label{eq:prob-toy}
	\min_{w\in \Rn}~\frac{1}{n}{\sum}_{i=1}^{n}f(w,i)+\vp(w):=\frac{1}{n}{\sum}_{i=1}^{n} \left[0.5 (a_i^\top w - b_i)^2 + c^\top w\right] + \iota_{\Delta^d}(w),
\end{equation}
where $\Delta^d := \{w: w_i \in [0,1] \, \text{and} \, \mathds 1^\top w = 1\}$ is the $d$-simplex. We choose $n = 5000$, $d = 250$, and select $A = [a_1,\dots,a_n]^\top \in \R^{n \times d}$ randomly following a uniform distribution, $a_{ij} \sim \mathcal U[0,1]$, or a Student's $t$ distribution with degree of freedom $1.5$. We generate $w^* \in \Delta^d$, $b$, and $c$, via
\[ w^*_i = 0.2, \; \forall~i \in \mathcal I, \quad w^*_i = 0, \; \forall~i \notin \mathcal I, \quad b = Aw^*, \quad c_i = 0, \; \forall~i \in \mathcal I, \quad c_i \sim \mathcal U[0,1], \; \forall~i \notin \mathcal I,  \]
where $\mathcal I \subseteq [d]$ is a random index set with $|\mathcal I| = 5$. We have $\nabla f(w^*,1) = \dots = \nabla f(w^*,n) = \nabla f(w^*) = c$ and by construction, it holds that $-\iprod{c}{y-w^*} = -\sum_{i\notin\mathcal I} c_iy_i \leq 0$ for all $y \in \Delta^d$, i.e., $-\nabla f(w^*) \in N_{\Delta^d}(w^*)$. Hence, $w^*$ is a solution to \cref{eq:prob-toy}. In the tests, we generate $A$ such that $A^\top A$ is invertible, i.e., problem \cref{eq:prob-toy} is strongly convex with $\sigma_*^2 = 0$. We run $\PSGD$, $\NRR$, and $\EPRR$ with $w^0 = e_n$, $\lambda = \frac{1}{\sL}$, and constant step sizes $\alpha_k = \frac{4}{\sL n}$ and $\alpha_k = \frac{0.04}{\sL n}$ where  $\sL = \frac1n \|A\|_2$. The results are presented in \Cref{fig:linear}. 

\subsection{Nonconvex Binary Classification}\label{subsec:exp-2}

\begin{figure}[t]
\centering
	\setlength{\abovecaptionskip}{-3pt plus 3pt minus 0pt}
	\setlength{\belowcaptionskip}{-10pt plus 3pt minus 0pt}
	\centering
	\includegraphics[width=8.0cm]{figs//legend.pdf} \vspace{-1ex}
	
	\hspace*{-2ex}
	\begin{tikzpicture}[scale=1]
	\node[right] at (0.0,0) {\includegraphics[width=3cm,trim=.5cm 0cm 0cm 0cm,clip]{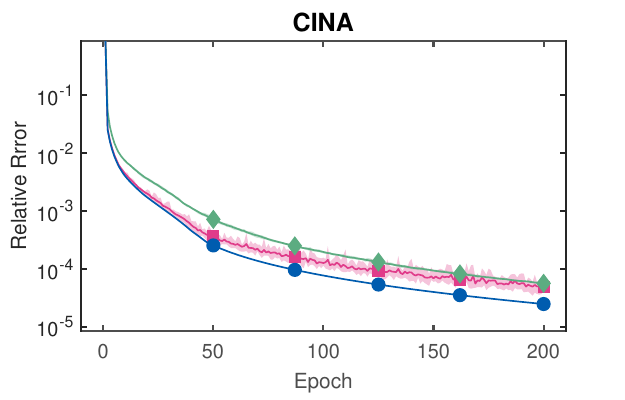}};
	\node[right] at (3,0) {\includegraphics[width=3cm,trim=.5cm 0cm 0cm 0cm,clip]{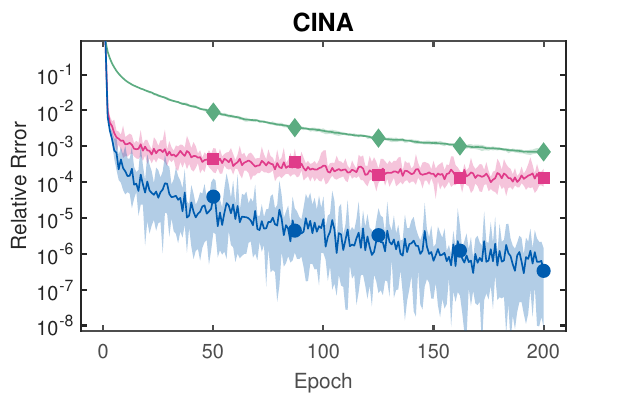}};
	\node[right] at (6,0) {\includegraphics[width=3cm,trim=.5cm 0cm 0cm 0cm,clip]{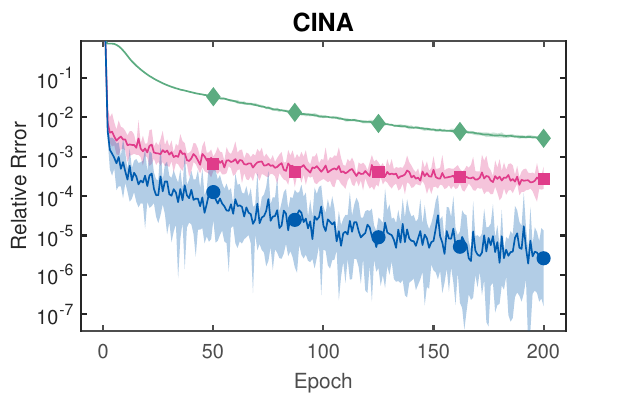}};
 \node[right] at (9,0) {\includegraphics[width=3cm,trim=.5cm 0cm 0cm 0cm,clip]{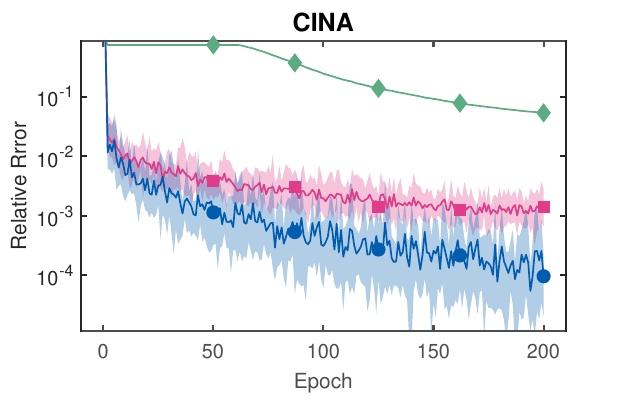}};
 \node[right] at (12,0) {\includegraphics[width=3cm,trim=.5cm 0cm 0cm 0cm,clip]{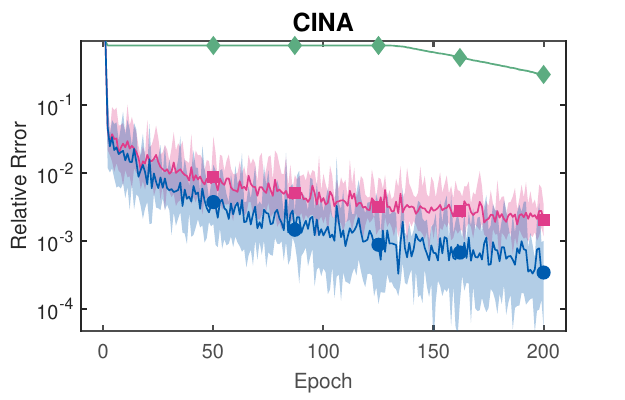}};
 \node[right] at (0.0,-2.2) {\includegraphics[width=3cm,trim=.5cm 0cm 0cm 0cm,clip]{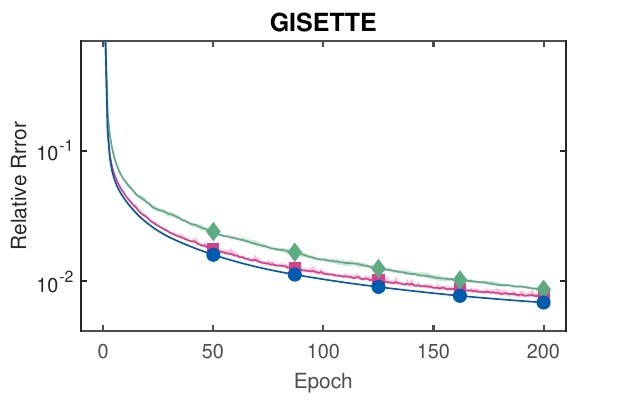}};
	\node[right] at (3,-2.2) {\includegraphics[width=3cm,trim=.5cm 0cm 0cm 0cm,clip]{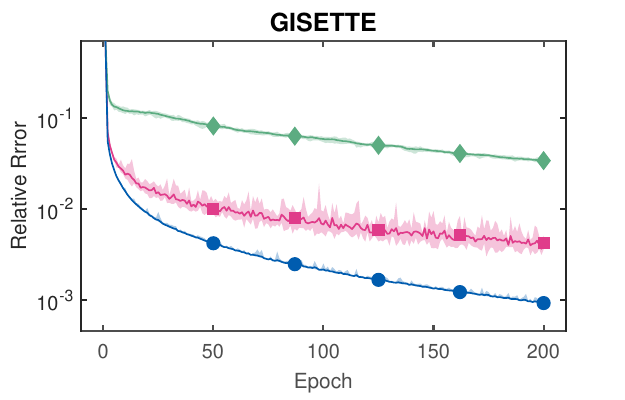}};
	\node[right] at (6,-2.2) {\includegraphics[width=3cm,trim=.5cm 0cm 0cm 0cm,clip]{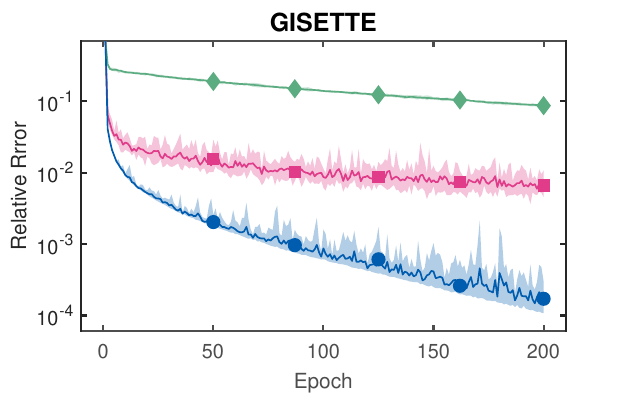}};
 \node[right] at (9,-2.2) {\includegraphics[width=3cm,trim=.5cm 0cm 0cm 0cm,clip]{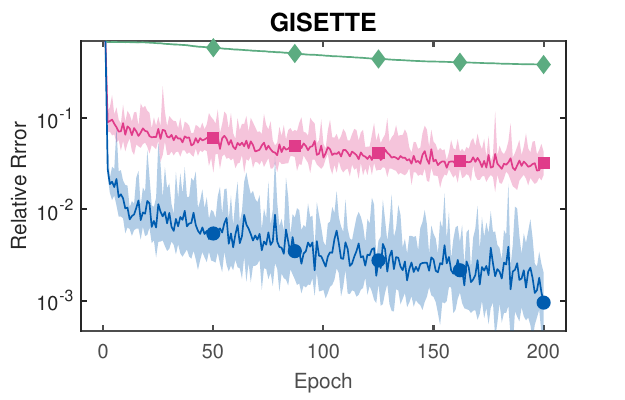}};
 \node[right] at (12,-2.2) {\includegraphics[width=3cm,trim=.5cm 0cm 0cm 0cm,clip]{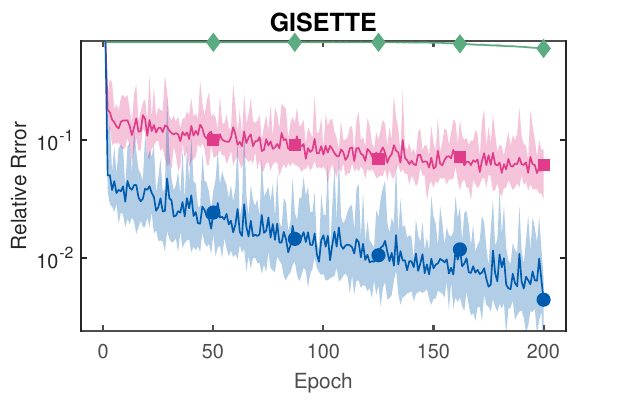}};

  \node[right] at (0.0,-4.4) {\includegraphics[width=3cm,trim=.5cm 0cm 0cm 0cm,clip]{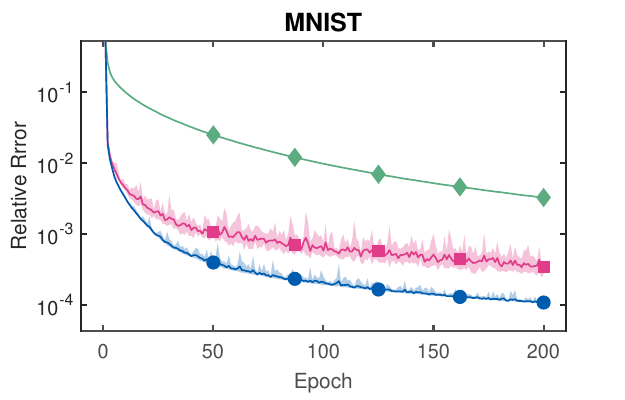}};
	\node[right] at (3,-4.4) {\includegraphics[width=3cm,trim=.5cm 0cm 0cm 0cm,clip]{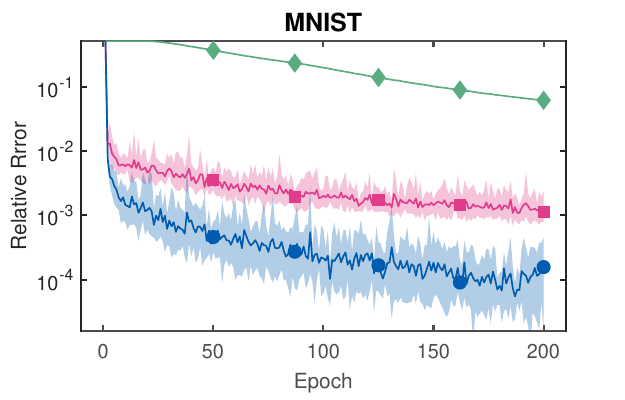}};
	\node[right] at (6,-4.4) {\includegraphics[width=3cm,trim=.5cm 0cm 0cm 0cm,clip]{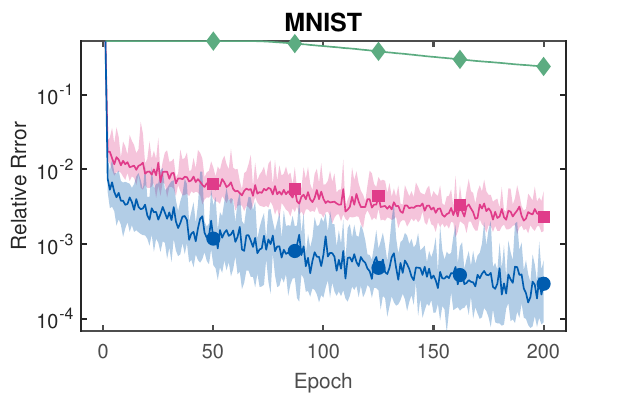}};
 \node[right] at (9,-4.4) {\includegraphics[width=3cm,trim=.5cm 0cm 0cm 0cm,clip]{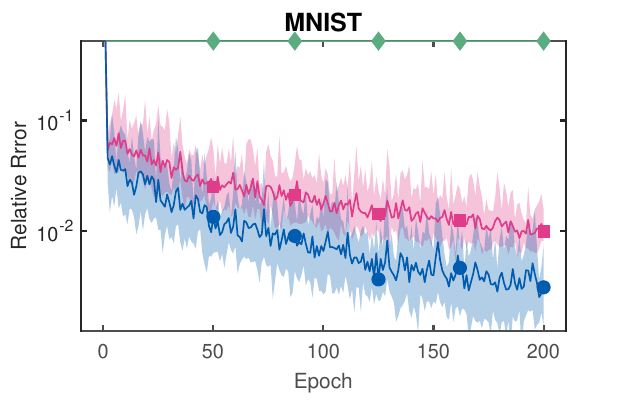}};
 \node[right] at (12,-4.4) {\includegraphics[width=3cm,trim=.5cm 0cm 0cm 0cm,clip]{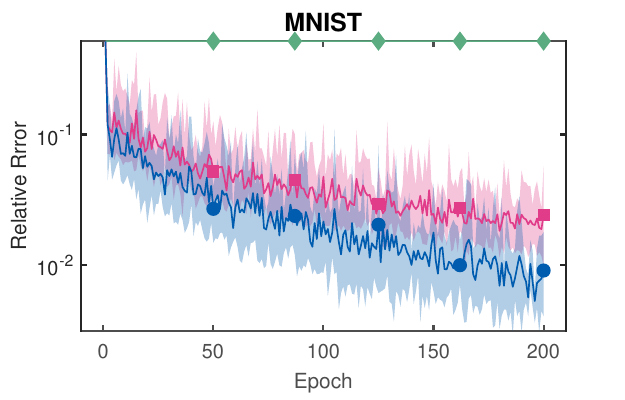}};

\node[right] at (0.0,-7) {\includegraphics[width=3cm,trim=.5cm 0cm 0cm 0cm,clip]{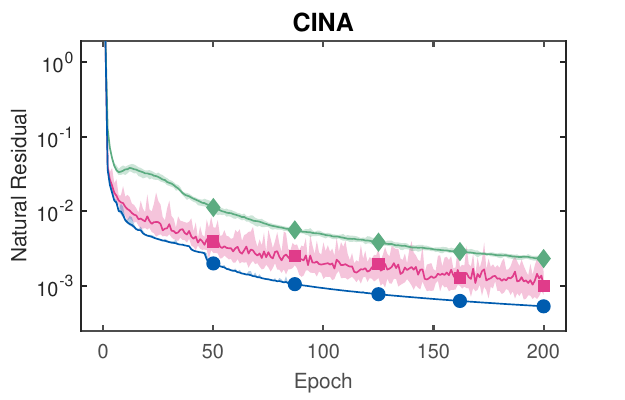}};
	\node[right] at (3,-7) {\includegraphics[width=3cm,trim=.5cm 0cm 0cm 0cm,clip]{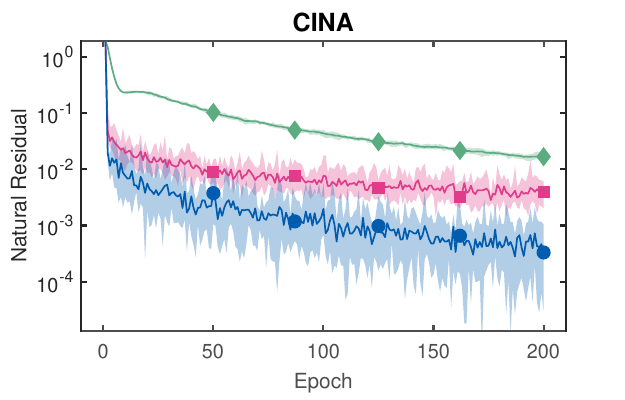}};
	\node[right] at (6,-7) {\includegraphics[width=3cm,trim=.5cm 0cm 0cm 0cm,clip]{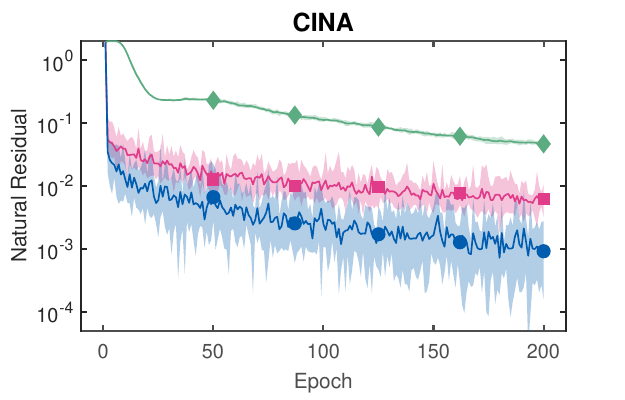}};
 \node[right] at (9,-7) {\includegraphics[width=3cm,trim=.5cm 0cm 0cm 0cm,clip]{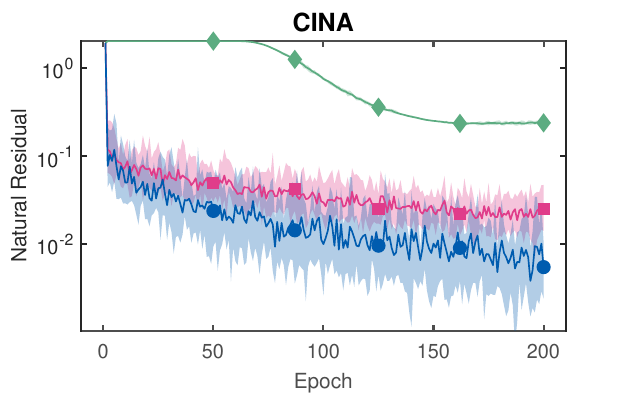}};
 \node[right] at (12,-7) {\includegraphics[width=3cm,trim=.5cm 0cm 0cm 0cm,clip]{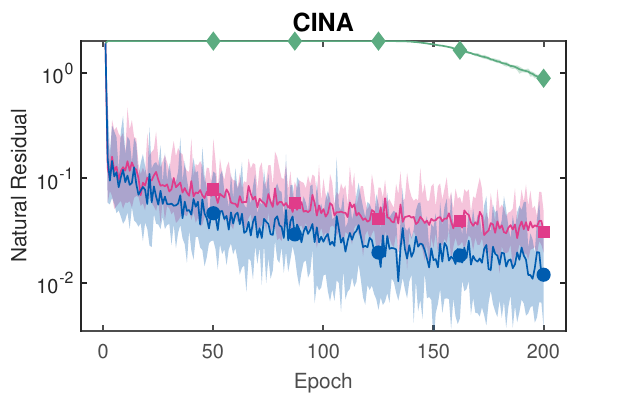}};
 \node[right] at (0.0,-9.2) {\includegraphics[width=3cm,trim=.5cm 0cm 0cm 0cm,clip]{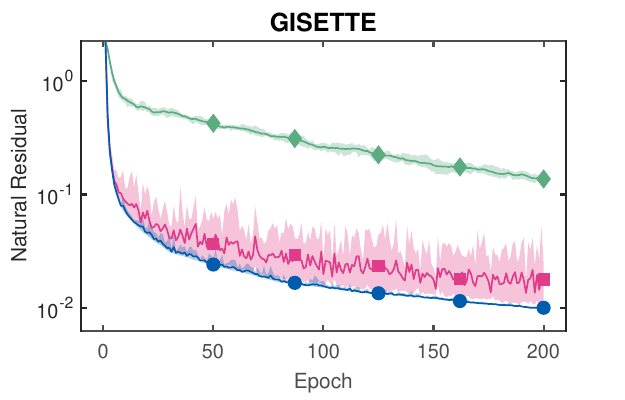}};
	\node[right] at (3,-9.2) {\includegraphics[width=3cm,trim=.5cm 0cm 0cm 0cm,clip]{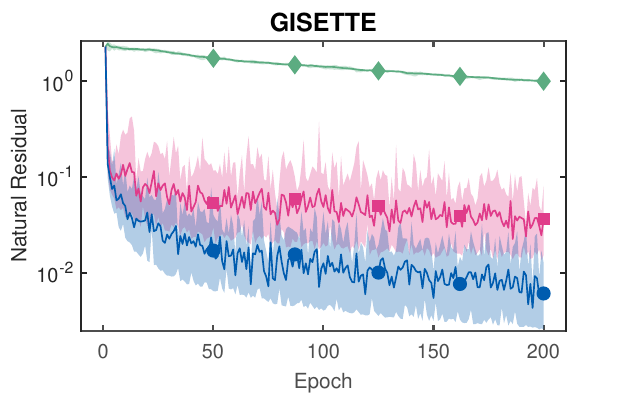}};
	\node[right] at (6,-9.2) {\includegraphics[width=3cm,trim=.5cm 0cm 0cm 0cm,clip]{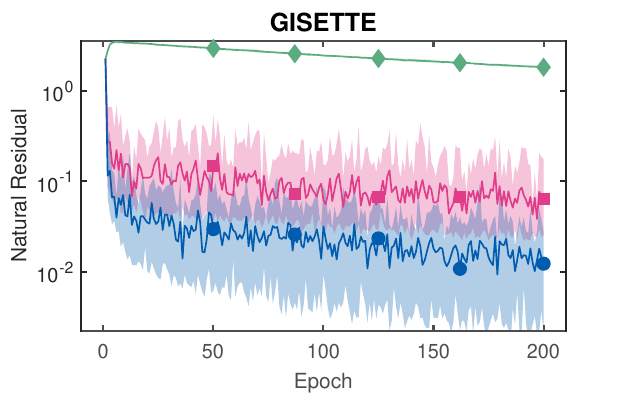}};
 \node[right] at (9,-9.2) {\includegraphics[width=3cm,trim=.5cm 0cm 0cm 0cm,clip]{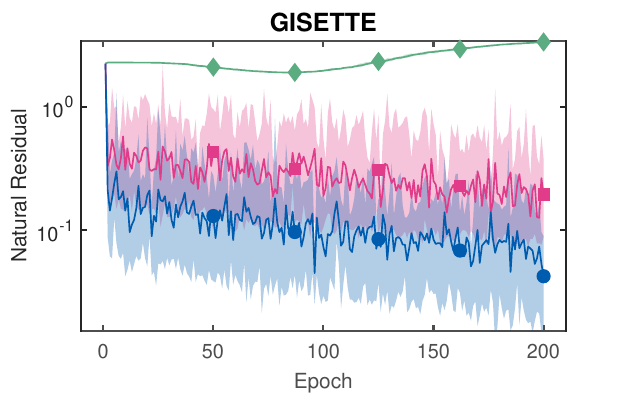}};
 \node[right] at (12,-9.2) {\includegraphics[width=3cm,trim=.5cm 0cm 0cm 0cm,clip]{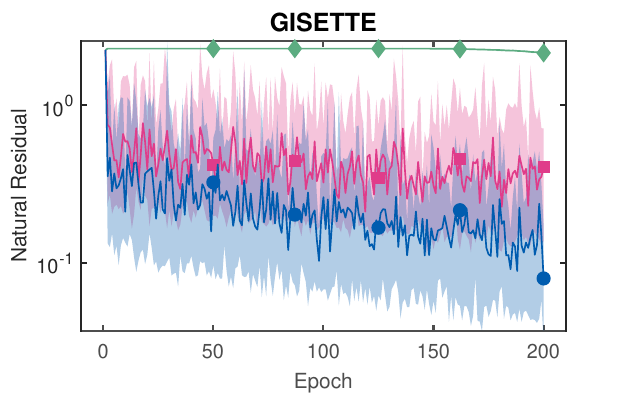}};

  \node[right] at (0.0,-11.4) {\includegraphics[width=3cm,trim=.5cm 0cm 0cm 0cm,clip]{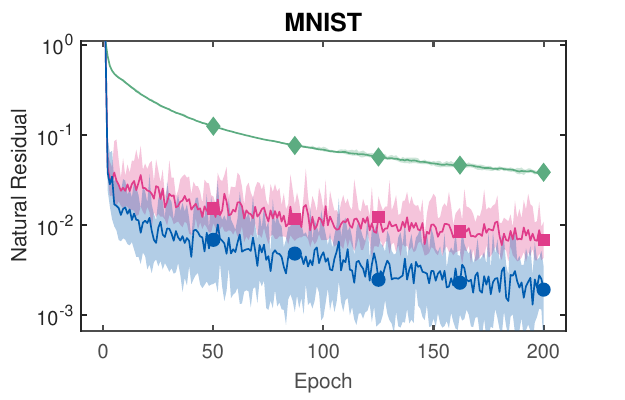}};
	\node[right] at (3,-11.4) {\includegraphics[width=3cm,trim=.5cm 0cm 0cm 0cm,clip]{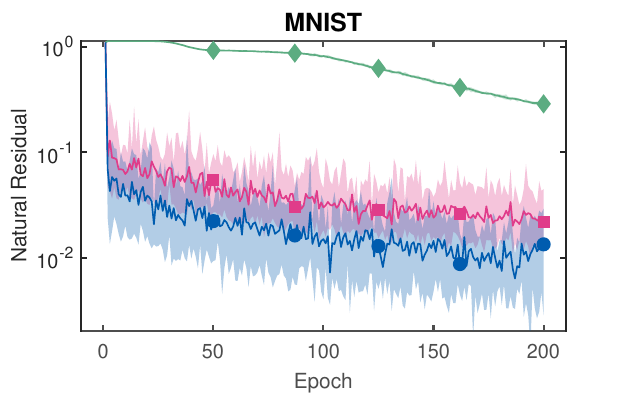}};
	\node[right] at (6,-11.4) {\includegraphics[width=3cm,trim=.5cm 0cm 0cm 0cm,clip]{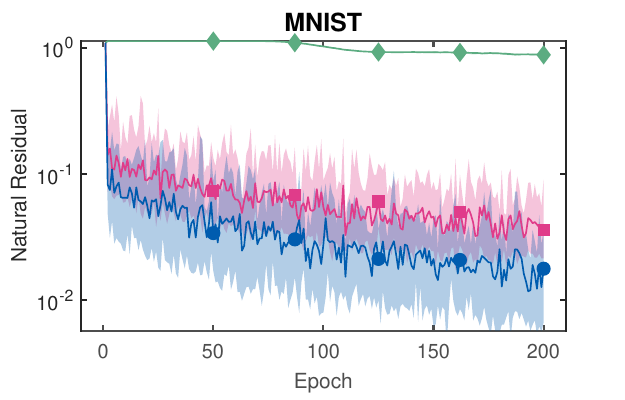}};
 \node[right] at (9,-11.4) {\includegraphics[width=3cm,trim=.5cm 0cm 0cm 0cm,clip]{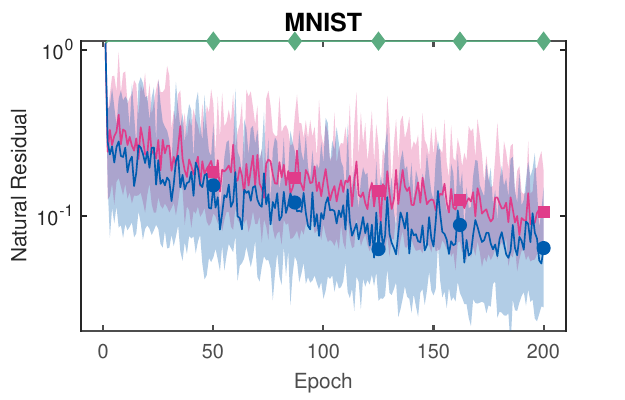}};
 \node[right] at (12,-11.4) {\includegraphics[width=3cm,trim=.5cm 0cm 0cm 0cm,clip]{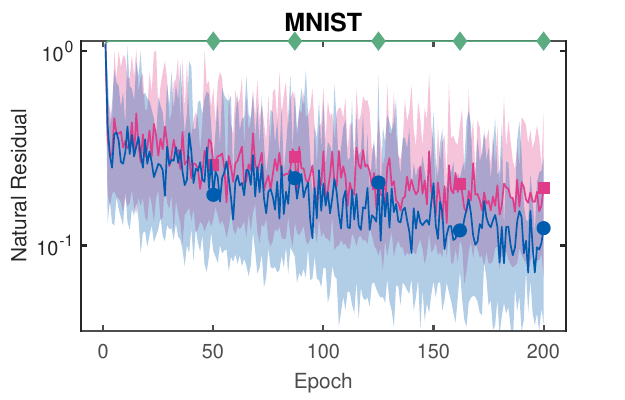}};
 
	\node at (15.3,-2.2) {\rotatebox{-90}{\footnotesize Relative error} };
   \draw [line width=0.2mm] (15.3,0) -- (15.3,-1.2); 
    \draw [line width=0.2mm] (15.2,0) -- (15.3,0);
    \draw [line width=0.2mm] (15.3,-3.2) -- (15.3,-4.4); 
    \draw [line width=0.2mm] (15.2,-4.4) -- (15.3,-4.4);
	\node at (15.3,-9.2) {\rotatebox{-90}{{\footnotesize Natural residual}}};
  \draw [line width=0.2mm] (15.3,-7) -- (15.3,-8); 
    \draw [line width=0.2mm] (15.2,-7) -- (15.3,-7);
    \draw [line width=0.2mm] (15.3,-10.4) -- (15.3,-11.4); 
    \draw [line width=0.2mm] (15.2,-11.4) -- (15.3,-11.4);
    \node[right] at (0.5,-12.8) {{\footnotesize(a)~\texttt{$\alpha=0.01$}}};
	\node[right] at (3.6,-12.8) {{\footnotesize(b)~\texttt{$\alpha=0.05$}}};
	\node[right] at (6.65,-12.8) {{\footnotesize(c)~\texttt{$\alpha=0.1$}}};
 	\node[right] at (9.7,-12.8) {{\footnotesize(d)~\texttt{$\alpha=0.5$}}};
  	\node[right] at (12.8,-12.8) {{\footnotesize(e)~\texttt{$\alpha=1$}}};
\end{tikzpicture}
	\caption{Performance of $\PSGD$, $\EPRR$, and $\NRR$ on the nonconvex binary classification problem \cref{eq:binary-clas} using different step size parameters $\alpha$.}
	\label{fig:2}
\end{figure}

Next, we consider a nonconvex binary classification problem with $\ell_1$-regularization \citep{mason1999boosting,wang2017stochastic,milzarek2019stochastic}:
\begin{equation}
	\label{eq:binary-clas}
	\min_{w\in \Rn}~\frac{1}{n}{\sum}_{i=1}^{n}f(w,i)+\vp(w):=\frac{1}{n}{\sum}_{i=1}^{n}[1-\tanh(b_i\cdot a_i^\top w)] + \nu\norm{w}_1.
\end{equation}
Here, $\tanh$ denotes the hyperbolic tangent function and the parameter $\nu$ is set to $\nu = 0.01$. We conduct the binary classification task on real datasets $(A,b) \in \R^{n \times d} \times \{0,1\}^n$. In our tests, we use the datasets \texttt{CINA}, \texttt{MNIST}, and \texttt{GISETTE}\footnote{Datasets are available at \url{http://www.causality.inf.ethz.ch/data} and \url{www.csie.ntu.edu.tw/~cjlin/libsvmtools/datasets}.}. (In \texttt{MNIST}, we only keep two features). \\[1mm]
\noindent\textbf{Implementation details.}  For all algorithms, we use polynomial step sizes of the form $\alpha_k=\alpha/(\sL+k)$ with $\alpha \in \{0.01,0.05,0.1,0.5,1\}$; the Lipschitz constant is $\sL = 0.8 \cdot \lambda_{\max}(A A^\top)/n$; the index $k$ represents the $k$-th epoch. We run each algorithm for $200$ epochs with $w^0 = 0$; this process is repeated $10$ times for each dataset. The parameter $\lambda$ is set to $\lambda = 1$. The results are reported in \Cref{fig:2}.

 Across all datasets, $\NRR$ appears to converge faster than $\PSGD$ and $\EPRR$ and is relatively robust w.r.t. the choice of $\alpha$. In the initial phases of the training, rapid convergence can be observed for all methods. However, $\NRR$ typically achieves a smaller relative error and natural residual than $\PSGD$ and $\EPRR$. This improved performance might originate from the design of $\NRR$: it incorporates without-replacement sampling and applies the $\ell_1$-proximity operator at each iteration to maintain sparsity. \vspace{0.5ex}

\noindent\textbf{Varying the parameter $\lambda$.}
 In \Cref{fig:exp-2 extra}, we additionally evaluate the performance of $\NRR$ using different values of the hyperparameter $\lambda \in \{0.1,1,10\}$. As illustrated in \Cref{fig:exp-2 extra}, larger values of $\lambda$ generally lead to an improved performance when solving the problem \eqref{eq:binary-clas} across the tested datasets. In this experiment, the overall convergence behavior $\NRR$ seems to show similar trends when using different $\lambda$. Note that in the previous tests presented in \Cref{fig:2}, we select $\lambda=1$. The results in \Cref{fig:exp-2 extra} indicate that better performance can be potentially achieved if the parameter $\lambda$ is tuned more carefully.
 
\begin{figure}[t]
\centering
	\setlength{\abovecaptionskip}{-3pt plus 3pt minus 0pt}
	\setlength{\belowcaptionskip}{-10pt plus 3pt minus 0pt}
	\centering
	\includegraphics[width=8.0cm]{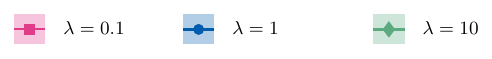} \vspace{0ex}
	
	\hspace*{-2ex}
	\begin{tikzpicture}[scale=1]
	\node[right] at (0.0,0) {\includegraphics[width=4.7cm,trim=0.27cm 0cm 0cm 0cm,clip]{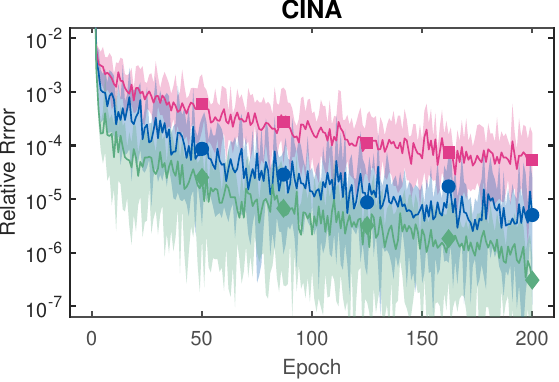}};
	\node[right] at (5,0) {\includegraphics[width=4.7cm,trim=0.28cm 0cm 0cm 0cm,clip]{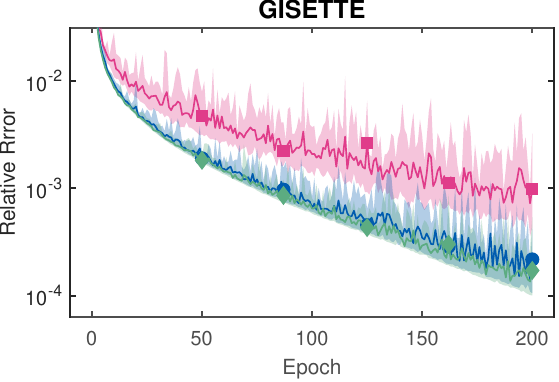}};
	\node[right] at (10,0) {\includegraphics[width=4.7cm,trim=0.26cm 0cm 0cm 0cm,clip]{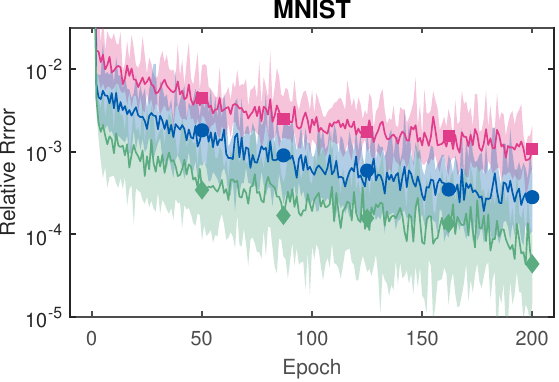}};
	\node[right] at (0.0,-3.8) {\includegraphics[width=4.7cm,trim=.27cm 0cm 0cm 0cm,clip]{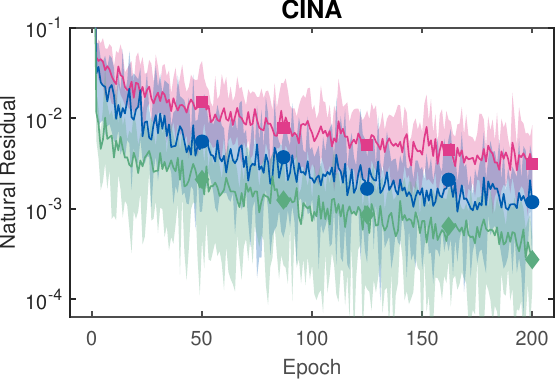}};
	\node[right] at (5,-3.8) {\includegraphics[width=4.7cm,trim=.28cm 0cm 0cm 0cm,clip]{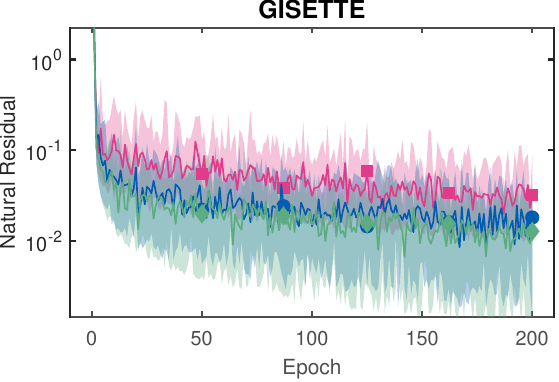}};
	\node[right] at (10,-3.8) {\includegraphics[width=4.7cm,trim=.26cm 0cm 0cm 0cm,clip]{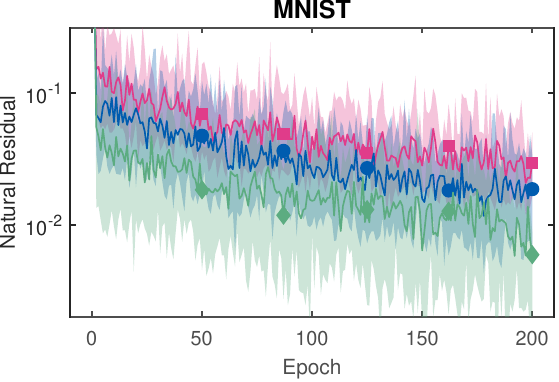}};
%
	%
	\node at (15.2,0.1) {\rotatebox{-90}{{\footnotesize Relative error}}};
	\node at (15.2,-3.6) {\rotatebox{-90}{{\footnotesize Natural residual}}};
\end{tikzpicture}
	\caption{Performance of $\NRR$ on the nonconvex binary classification problem \eqref{eq:binary-clas} for different choices of $\lambda>0$.}
	\label{fig:exp-2 extra}
\end{figure}

\begin{figure}[t]
\centering
	\setlength{\abovecaptionskip}{-3pt plus 3pt minus 0pt}
	\setlength{\belowcaptionskip}{-10pt plus 3pt minus 0pt}
	\centering
	\includegraphics[width=8.0cm]{figs//legend.pdf} \vspace{-1ex}
	
	\hspace*{-2ex}
	\begin{tikzpicture}[scale=1]
	\node[right] at (0.0,0) {\includegraphics[width=5cm,trim=.8cm 0cm 0cm 0cm,clip]{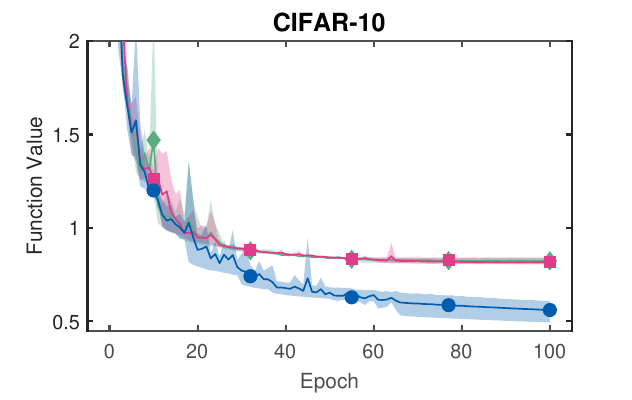}};
	\node[right] at (5,0) {\includegraphics[width=5cm,trim=.58cm 0cm 0cm 0cm,clip]{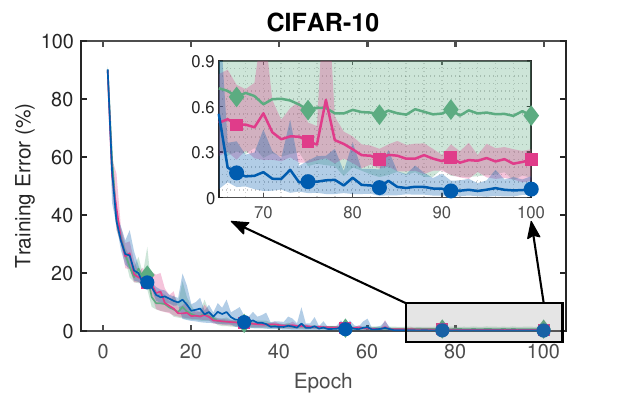}};
	\node[right] at (10,0) {\includegraphics[width=5cm,trim=.58cm 0cm 0cm 0cm,clip]{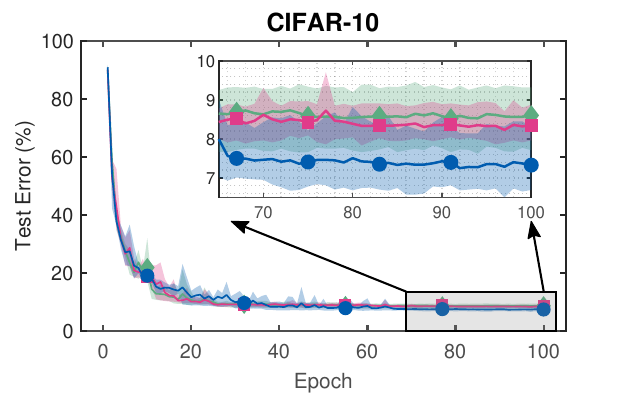}};
	\node[right] at (0.0,-3.8) {\includegraphics[width=5cm,trim=.8cm 0cm 0cm 0cm,clip]{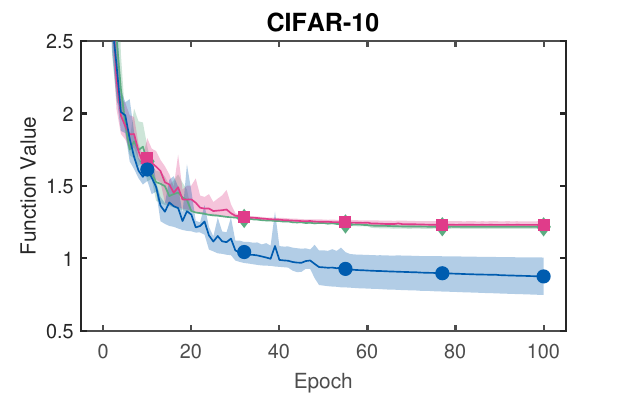}};
	\node[right] at (5,-3.8) {\includegraphics[width=5cm,trim=.58cm 0cm 0cm 0cm,clip]{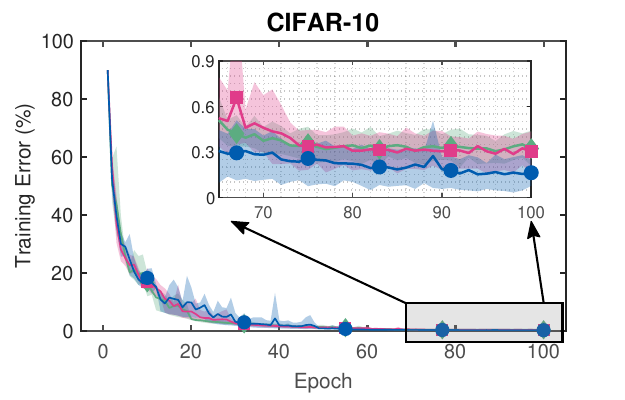}};
	\node[right] at (10,-3.8) {\includegraphics[width=5cm,trim=.58cm 0cm 0cm 0cm,clip]{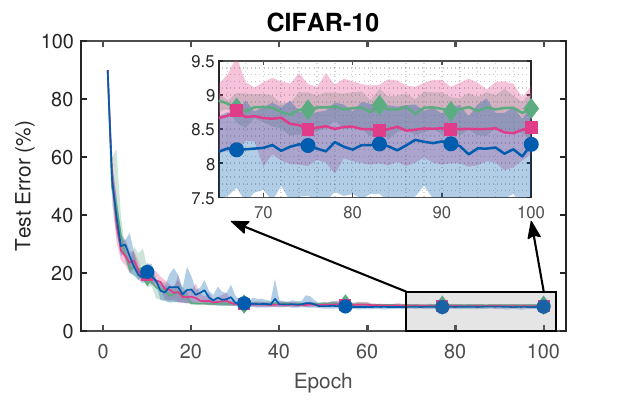}};
	\node[right] at (1.1,-6) {{\footnotesize(a)~{Function value.}}};
	\node[right] at (6.15,-6) {{\footnotesize(b)~Training error.}};
	\node[right] at (11.3,-6) {{\footnotesize(c)~{Test error.}}};
	\node at (15.2,0) {\rotatebox{-90}{{\footnotesize ResNet-18}}};
	\node at (15.2,-4) {\rotatebox{-90}{{\footnotesize VGG-16}}};
\end{tikzpicture}
	\caption{Performance of $\PSGD$, $\EPRR$, and $\NRR$ on the deep learning problem \cref{eq:deep} applying the same learning rate policy. The performance is evaluated on \texttt{CIFAR-10} using the neural network architectures ResNet-18 and VGG-16.}
	\label{fig:exp-3}
\end{figure}

\subsection{Deep Learning for Image Classification}
In this subsection, we study multiclass image classification for the dataset \texttt{CIFAR-10} \citep{krizhevsky2009learning}. \texttt{CIFAR-10} contains $10$ classes of images and each class contains $6,000$ images. In this experiment, we split the dataset into $n_{\text{train}}=50,000$ training samples and $n_{\text{test}}=10,000$ test samples. We consider standard ResNet-18 \citep{he2016deep} and VGG-16 \citep{simonyan2015deep} architectures with the cross-entropy loss function \citep{yang2021stochastic,tang2021data} and elastic net regularizer \citep{zou2005regularization}: 
\begin{equation} \label{eq:deep}
	\min_{w\in \Rn}~-\frac{1}{n}{\sum}_{i=1}^{n} \log \left(\frac{\exp(\cT(w,a_i)^\top e^{b_i} )}{{\sum}_{j=1}^{10} \exp(\cT(w,a_i)^\top e^j) } \right) + \nu_1 \norm{w}_1 + \nu_2 \norm{w}^2.
\end{equation}
Here, the tuple $(a_i,b_i)$ is a training sample with the color image $a_i\in\R^{32\times 32 \times 3}$ and the corresponding label $b_i\in [10]$. The operator $\cT(w,\cdot):\R^{32\times 32 \times 3} \mapsto \R^{10}$, which maps an image to a ten dimensional vector, represents the neural network architecture with the weights $w$. The vector $e^j\in \R^{10}$ is a unit vector with its $j$-th element equal to $1$. To avoid overfitting while maintaining the sparsity of the parameters of the model, the elastic net regularizer is applied with parameters set to $\nu_1=10^{-6}$ and $\nu_2=10^{-4}$. \\[1mm]  %
\noindent\textbf{Implementation details.} We use adaptive step sizes ($\mathsf{ReduceLROnPlateau}$  with initial step size $\alpha=0.1$ in PyTorch \citep{paszke2019pytorch}) and set $\lambda=10^{-2}$ for $\NRR$ (in both architectures). We train ResNet-18 and VGG-16 for $100$ epochs with batch size $128$ and run each algorithm $5$ times independently. 

The results in \Cref{fig:exp-3} show that $\NRR$ achieves the lowest training loss. While all tested methods reach a low training error at the end, $\NRR$ seems to slightly outperform $\PSGD$ and $\EPRR$. Similar trends can also be observed when considering the test error. 

\section{Conclusion}
In this paper, we propose a new proximal random reshuffling method ($\NRR$) for large-scale composite problems. Our approach adopts a normal map-based perspective and uses stochastic gradient information generated through without-replacement sampling. The obtained complexity results match the existing bounds for $\RR$ in the smooth case and improve other known complexity bounds for this class of problems in terms of gradient evaluations. Under the (global) Polyak-{\L}ojasiewicz condition and in the interpolation setting, $\NRR$ provably converges to an optimal function value at a linear rate. In addition, accumulation points of a sequence of iterates $\{w^k\}_k$ generated by $\NRR$ are shown to correspond to stationary points of the problem. Finally, under the (local) Kurdyka-{\L}ojasiewicz inequality, we establish last-iterate convergence for $\NRR$ and derive asymptotic rates of convergence. Numerical experiments illustrate that the proposed method effectively maintains feasibility and performs favorably on nonconvex, nonsmooth problems and learning tasks.

\acks{The authors would like to thank the Action Editor and two anonymous reviewers for their detailed and constructive comments, which have helped greatly to improve the quality and presentation of the manuscript.

Xiao Li was partly supported by the National Natural Science Foundation of China under grant No. 12201534 and by the Shenzhen Science and Technology Program under Grant No. RCYX20221008093033010. Andre Milzarek was partly supported by the National Natural Science Foundation of China (Foreign Young Scholar Research Fund Project) under Grant No. 12150410304, by the Shenzhen Science and Technology Program under Grant No. RCYX20210609103124047 and RCYX20221008093033010, by the Shenzhen Stability Science Program 2023, Shenzhen Key Lab of Multi-Modal Cognitive Computing, and by the Internal Project Fund from the Shenzhen Research Institute of Big Data under Grant T00120230001.}


%

\appendix
\section{Preparatory Results}
\subsection{Basic Mathematical Tools}
In the following, we present several fundamental results concerning sequences $\{y_k\}_k$ of real numbers and their convergence. We start with a discrete variant of Gronwall's inequality \cite[Appendix B]{borkar2009stochastic}.
	
\begin{lemma}[Gronwall's Inequality]
    \label[lemma]{Thm:G}
    Let $\{a_k\}_k$, $\{y_k\}_k \subseteq \R_+$ and $p,q\geq0$ be given. Assume $y_{k+1} \leq p + q \sum_{j=1}^k a_j y_j$ for all $k\geq 1$. 
    Then, $y_{k+1} \leq p \cdot \exp({q\sum_{j=1}^k a_j})$ for all $k\geq 1$.
\end{lemma}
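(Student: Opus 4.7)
My plan is to proceed by induction on $k$, using as the only analytic ingredient the elementary bound $1+x \leq e^x$ for $x \geq 0$ applied in a telescoping fashion. Let me abbreviate $T_j := q\sum_{i=1}^j a_i$ so that the target inequality reads $y_{k+1} \leq p\exp(T_k)$.

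The base case $y_1 \leq p$ follows by reading the hypothesis at $k=0$ under the empty-sum convention stated in Section~\ref{subsec:notation} (i.e., $\sum_{j=1}^{0}a_jy_j = 0$); equivalently, this is the natural ``initial condition'' that the statement tacitly assumes. For the inductive step, suppose that $y_j \leq p\exp(T_{j-1})$ for every $1 \leq j \leq k$. Substituting these bounds into the assumed recursion gives
\[
y_{k+1} \;\leq\; p + q\sum_{j=1}^{k} a_j y_j \;\leq\; p\left(1 + \sum_{j=1}^{k} q a_j\, \exp(T_{j-1})\right).
\]
The key observation is the one-step bound
\[
\exp(T_j) - \exp(T_{j-1}) \;=\; \exp(T_{j-1})\bigl(\exp(qa_j)-1\bigr) \;\geq\; q a_j \exp(T_{j-1}),
\]
which is an immediate consequence of $e^{x}-1 \geq x$ applied with $x = qa_j \geq 0$. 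Summing over $j=1,\ldots,k$, the right-hand side telescopes, yielding $\sum_{j=1}^{k} q a_j \exp(T_{j-1}) \leq \exp(T_k)-1$, and therefore $y_{k+1} \leq p\exp(T_k)$, closing the induction.

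There is no serious obstacle here; the only subtlety is justifying the base case, which requires either the empty-sum reading of the hypothesis at $k=0$ or an implicit reading that $y_1 \leq p$. Everything else is a clean application of convexity of the exponential through a telescoping cancellation, and the nonnegativity of $\{a_k\}$, $\{y_k\}$, $p$, $q$ is used precisely to ensure all inequalities preserve direction.
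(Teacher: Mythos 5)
Your proof is correct. Note, however, that the paper does not prove this lemma at all --- it is quoted from \cite[Appendix B]{borkar2009stochastic} --- so there is no in-paper argument to compare against; your strong induction combined with the telescoping bound $\exp(T_j)-\exp(T_{j-1}) \geq qa_j\exp(T_{j-1})$ (i.e., $e^x-1\geq x$) is the standard self-contained derivation and every step checks out. The one point worth emphasizing is the base case, which you rightly flag: as literally stated, the hypothesis runs only over $k\geq 1$ and therefore places no constraint on $y_1$, and without $y_1\leq p$ the conclusion already fails at $k=1$ (take $p=q=a_1=1$ and $y_1$ large, so that $y_2=1+y_1>e$ is consistent with the hypothesis). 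So the lemma tacitly assumes the $k=0$ instance $y_1\leq p$, exactly as you read it; reassuringly, in the paper's only application of this lemma (the proof of the $\Fnor$-comparison lemma in the appendix) the sequence is $y_j=\|\Fnor(z^{k+j})-\Fnor(z^k)\|$ with $y_0=0$, so the required initial condition holds automatically there. With that reading in place, your induction closes cleanly and the nonnegativity assumptions are used exactly where you say they are.
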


In order to establish global convergence of $\NRR$, we will use the well-known supermartingale convergence theorem, see, e.g., \cite[Proposition A.31]{ber16}.

\begin{theorem} \label[theorem]{Thm:super-martingale-convergece}
Let $\{y_k\}_k$, $\{p_k\}_k$, $\{q_k\}_k$, and $\{\gamma_k\}_k \subseteq \R_+$ be non-negative sequences. Assume that $\sum_{k=1}^\infty \gamma_k < \infty$, $\sum_{k=1}^{\infty}q_k<\infty$, and $y_{k+1} \leq (1+\gamma_k)y_k-p_k+q_k$ for all $k\geq 1$. Then, $\{y_k\}_k$ converges to some $y \geq 0$ and it follows $\sum_{k=1}^{\infty}p_k<\infty$.
\end{theorem}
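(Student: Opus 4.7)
The plan is to reduce the inequality $y_{k+1} \leq (1+\gamma_k)y_k - p_k + q_k$ to a plain monotone recursion by absorbing the multiplicative factors $(1+\gamma_k)$ into a rescaling and then absorbing the additive $q_k$ terms into a tail sum. Concretely, I will set $A_0 := 1$, $A_k := \prod_{j=1}^{k}(1+\gamma_j)$ and work with $\tilde y_k := y_k/A_{k-1}$. Because $\gamma_k \geq 0$ and $\sum_k \gamma_k < \infty$, the partial products $A_k$ are increasing and converge to some finite limit $A_\infty \in [1,\infty)$; in particular $1 \leq A_k \leq A_\infty$ for every $k$. Dividing the given recursion by $A_k$ and using $A_k = (1+\gamma_k)A_{k-1}$, I obtain
\[ \tilde y_{k+1} \leq \tilde y_k - \frac{p_k}{A_k} + \frac{q_k}{A_k}, \quad k \geq 1, \]
where all three terms on the right are non-negative.

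Next, I will absorb the $q_k/A_k$ contribution by defining the shifted sequence
\[ \hat y_k := \tilde y_k + \sum_{j=k}^{\infty} \frac{q_j}{A_j}, \quad k \geq 1. \]
The tail sum is finite because $q_j/A_j \leq q_j$ and $\sum_j q_j < \infty$, and it tends to $0$ as $k\to\infty$. A direct substitution then yields $\hat y_{k+1} \leq \hat y_k - p_k/A_k$, so $\{\hat y_k\}_k$ is non-increasing. Since $\tilde y_k \geq 0$ and the tail sum is non-negative, $\hat y_k \geq 0$, and hence $\{\hat y_k\}_k$ converges to some $\hat y \geq 0$ by the monotone convergence theorem for real sequences.

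From this monotone recursion I will immediately extract the two desired conclusions. Telescoping $\hat y_{k+1} \leq \hat y_k - p_k/A_k$ gives $\sum_{k=1}^{\infty} p_k/A_k \leq \hat y_1 - \hat y < \infty$, and because $A_k \leq A_\infty$ one has $p_k \leq A_\infty \cdot (p_k/A_k)$, so $\sum_{k=1}^{\infty} p_k < \infty$. For the convergence of $\{y_k\}_k$, note that $\tilde y_k = \hat y_k - \sum_{j=k}^{\infty} q_j/A_j \to \hat y - 0 = \hat y$, and since $A_{k-1} \to A_\infty$ it follows that $y_k = A_{k-1}\tilde y_k \to A_\infty \hat y =: y \geq 0$.

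The main technical point is just to justify that $A_k$ converges to a finite positive number; this is the standard fact that $\sum_k \gamma_k < \infty$ implies $\prod_k (1+\gamma_k) < \infty$, which follows from $1+\gamma_k \leq \exp(\gamma_k)$ and monotonicity of the partial products. Everything else is a routine telescoping argument, so no further obstacle is expected.
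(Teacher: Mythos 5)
Your proof is correct. Note that the paper does not actually prove this statement---it is quoted as a known result with a citation to \cite[Proposition A.31]{ber16}---so there is no in-paper argument to compare against; your rescaling by the partial products $A_k=\prod_{j\le k}(1+\gamma_j)$, absorbing the $q_k$ terms into a vanishing tail, and telescoping the resulting monotone recursion is the standard deterministic Robbins--Siegmund argument, and every step (finiteness of $A_\infty$ via $1+\gamma_k\le\exp(\gamma_k)$, nonnegativity of $\hat y_k$, and the passage from $\sum_k p_k/A_k<\infty$ to $\sum_k p_k<\infty$ using $A_k\le A_\infty$) checks out.
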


\subsection{Sampling Without Replacement}

In the following lemma, we restate results for the variance of sampling a collection of vectors from a finite set of vectors without replacement, \cite[Lemma 1]{mishchenko2020random}. 
\begin{lemma}
	\label[lemma]{lem:sample-without-re}
 Let $X_1,\dots,X_n \in \Rn$ be given and let $\bar X := \frac{1}{n} \sum_{i=1}^n X_i$ and $\sigma^2 := \frac{1}{n} \sum_{i=1}^n \|X_i -\bar X \|^2$ denote the associated average and population variance. Let $t \in [n]$ be fixed and let $X_{\pi_1},\dots,X_{\pi_t}$ be sampled uniformly without replacement from $\{X_1,\dots,X_n\}$. Then, we have
\[
	\Exp[\bar X_\pi] = \bar{X} \quad \text{and} \quad \Exp[\|\bar X_\pi-\bar{X}\|^2] = \frac{n-t}{t(n-1)}\cdot\sigma^2,
\quad \text{where} \quad \bar X_\pi = \frac{1}{t}{\sum}_{i=1}^t X_{\pi_i}. \]
\end{lemma}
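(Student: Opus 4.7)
My plan is to handle the mean identity first, which is immediate from symmetry, and then reduce the variance computation to counting pairwise covariances.

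For the first identity, I will observe that the marginal distribution of each $X_{\pi_i}$ is uniform on $\{X_1,\dots,X_n\}$, since conditioning on sampling $i$ positions without replacement the $i$-th draw is still uniform by exchangeability. Thus $\Exp[X_{\pi_i}] = \bar X$ for every $i \in [t]$, and linearity gives $\Exp[\bar X_\pi] = \bar X$.

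For the variance, I will center and expand:
\[
\Exp\bigl[\|\bar X_\pi - \bar X\|^2\bigr] = \frac{1}{t^2}\sum_{i=1}^{t}\sum_{j=1}^{t} \Exp\bigl[\iprod{X_{\pi_i}-\bar X}{X_{\pi_j}-\bar X}\bigr].
\]
The diagonal terms $(i=j)$ each contribute $\sigma^2$ by the uniform marginal, yielding $t\sigma^2$ in total. For the off-diagonal terms $(i\neq j)$, exchangeability implies that $(X_{\pi_i},X_{\pi_j})$ is uniformly distributed on ordered pairs of distinct elements, so
\[
\Exp\bigl[\iprod{X_{\pi_i}-\bar X}{X_{\pi_j}-\bar X}\bigr] = \frac{1}{n(n-1)} {\sum}_{k\neq \ell}\iprod{X_k-\bar X}{X_\ell - \bar X}.
\]
The key identity I will use here is ${\sum}_{k,\ell}\iprod{X_k-\bar X}{X_\ell-\bar X} = \|\sum_k (X_k-\bar X)\|^2 = 0$, which lets me rewrite the off-diagonal sum as $-\sum_k \|X_k-\bar X\|^2 = -n\sigma^2$. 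This gives each off-diagonal covariance the value $-\sigma^2/(n-1)$, and there are $t(t-1)$ such terms.

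Combining the diagonal and off-diagonal contributions,
\[
\Exp\bigl[\|\bar X_\pi - \bar X\|^2\bigr] = \frac{1}{t^2}\Bigl[t\sigma^2 - \frac{t(t-1)\sigma^2}{n-1}\Bigr] = \frac{\sigma^2}{t}\cdot\frac{n-t}{n-1},
\]
which is exactly the claimed formula. I do not expect a serious obstacle: the only subtlety is justifying that the pair $(X_{\pi_i},X_{\pi_j})$ for $i\neq j$ is uniform on ordered distinct pairs, but this follows directly from the exchangeability of uniform random permutations. The cancellation ${\sum}_{k,\ell}\iprod{X_k-\bar X}{X_\ell-\bar X}=0$ is the punchline that converts the off-diagonal covariances into a negative contribution proportional to $\sigma^2$ and produces the correct ``finite population correction'' factor $(n-t)/(n-1)$.
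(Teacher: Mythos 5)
Your proof is correct. The paper does not prove this lemma itself---it simply restates it from \cite[Lemma 1]{mishchenko2020random}---but your argument is the standard one underlying that reference: exchangeability gives the uniform marginals and pairwise distributions, the diagonal terms contribute $t\sigma^2$, and the cancellation $\sum_{k,\ell}\iprod{X_k-\bar X}{X_\ell-\bar X}=0$ turns each of the $t(t-1)$ off-diagonal covariances into $-\sigma^2/(n-1)$, yielding the finite-population correction $\frac{n-t}{t(n-1)}$. All steps check out.
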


\section{Approximate Descent of the Merit Function}

\subsection{Basic Estimates}
In this subsection, we provide two basic estimates that will be used in the derivation of the approximate descent property of $\mer$ stated in \cref{lem:merit-descent-1}. 
\begin{lemma}
		\label[lemma]{lem:first-descent}
		Let \ref{A1}--\ref{A2} hold and let $\{w^k\}_k$ and $\{z^k\}_k$ be generated by $\NRR$ with $\lambda\in(0,\frac1\rho)$ and step sizes $\{\alpha_k\}_k \subseteq \R_{++}$. Then, for all $k\geq1$, we have 
  \[\psi(w^{k+1})-\psi(w^k)\leq  \langle \Fnor(z^k) + \lambda^{-1}({z^{k+1}-z^k}), w^{k+1}-w^k \rangle + \left[{\frac{\sL+\rho}{2}-\frac{1}{\lambda}}\right]\norm{w^{k+1}-w^k}^2.\]
	\end{lemma}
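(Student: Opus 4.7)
The plan is to split $\psi = f + \vp$ and handle each part separately, then to algebraically manipulate the resulting bound to expose the normal map $\Fnor(z^k)$.

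First, since $\nabla f$ is $\sL$-Lipschitz by \ref{A1}, the descent lemma yields
\[ f(w^{k+1}) - f(w^k) \leq \langle \nabla f(w^k),\, w^{k+1}-w^k \rangle + \tfrac{\sL}{2}\|w^{k+1}-w^k\|^2. \]
For the nonsmooth part, I would invoke the defining prox relation $w^{k+1} = \proxl(z^{k+1})$, which gives $\lambda^{-1}(z^{k+1}-w^{k+1}) \in \partial\vp(w^{k+1})$. Plugging this particular subgradient into the weak convexity inequality \cref{eq:weak-cvx} with $x = w^{k+1}$ and $y = w^k$ and rearranging, I obtain
\[ \vp(w^{k+1}) - \vp(w^k) \leq \langle \lambda^{-1}(z^{k+1}-w^{k+1}),\, w^{k+1}-w^k\rangle + \tfrac{\rho}{2}\|w^{k+1}-w^k\|^2. \]

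Adding the two bounds produces
\[ \psi(w^{k+1}) - \psi(w^k) \leq \langle \nabla f(w^k) + \lambda^{-1}(z^{k+1}-w^{k+1}),\, w^{k+1}-w^k\rangle + \tfrac{\sL+\rho}{2}\|w^{k+1}-w^k\|^2. \]
The remaining task is purely algebraic: rewrite the linear term to expose $\Fnor(z^k) = \nabla f(w^k) + \lambda^{-1}(z^k - w^k)$. Using the identity
\[ \nabla f(w^k) + \lambda^{-1}(z^{k+1}-w^{k+1}) = \Fnor(z^k) + \lambda^{-1}(z^{k+1}-z^k) + \lambda^{-1}(w^k - w^{k+1}), \]
the contribution of the last summand equals $-\lambda^{-1}\|w^{k+1}-w^k\|^2$, which combines with the quadratic remainder to produce the coefficient $\frac{\sL+\rho}{2} - \frac{1}{\lambda}$ claimed in the statement.

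There is no real obstacle here; the only subtlety is choosing the ``right'' element of $\partial\vp(w^{k+1})$ so that the weak convexity inequality couples cleanly with the iterate relation $z^{k+1} = z^k - \alpha_k[\nabla f + \lambda^{-1}(z-w)]$ via $z^{k+1} - w^{k+1}$. The condition $\lambda < 1/\rho$ is used implicitly to ensure that $\proxl$ is well-defined and single-valued, so that the subgradient identification is unambiguous.
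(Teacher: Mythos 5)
Your proposal is correct and follows essentially the same route as the paper's proof: the descent lemma on $f$, the weak-convexity inequality \cref{eq:weak-cvx} applied with the specific subgradient $\lambda^{-1}(z^{k+1}-w^{k+1})\in\partial\vp(w^{k+1})$, and an algebraic rewriting of the linear term using $\Fnor(z^k)=\nabla f(w^k)+\lambda^{-1}(z^k-w^k)$, which produces the extra $-\lambda^{-1}\|w^{k+1}-w^k\|^2$. The identity you state and the resulting coefficient $\frac{\sL+\rho}{2}-\frac{1}{\lambda}$ both check out.
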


\begin{proof}
	First, invoking \sL-smoothness of $f$, \cref{eq:weak-cvx}, and $\lambda^{-1}(z^{k+1}-w^{k+1}) \in \partial\vp(w^{k+1})$, we obtain
		\begin{align*}
			&\hspace{-2ex}\psi(w^{k+1})-\psi(w^k)\\ 
			& \leq \iprod{\nabla f(w^k)}{w^{k+1}-w^k}+\frac{\sL}{2}\norm{w^{k+1}-w^k}^2 + \vp(\prox{\lambda\vp}(z^{k+1}))-\vp(\prox{\lambda\vp}(z^k))\\& \leq \iprod{\nabla f(w^k)}{w^{k+1}-w^k}+\frac{\sL+\rho}{2}\norm{w^{k+1}-w^k}^2 + \frac{1}{\lambda}\iprod{z^{k+1}-w^{k+1}}{w^{k+1}-w^k},
		\end{align*}
	Using $\Fnor(z^k)=\nabla f(w^k) + \lambda^{-1}(z^k-w^k)$, this yields
	\begin{align*}
		&\hspace{-2ex}\psi(w^{k+1})-\psi(w^k) \\ &\leq \iprod{\nabla f(w^k)}{w^{k+1}-w^k}+\frac{\sL+\rho}{2}\norm{w^{k+1}-w^k}^2 + \iprod{\Fnor(z^k)-\nabla f(w^k) }{w^{k+1}-w^k} \\&\quad + {\lambda^{-1}}\iprod{z^{k+1}-w^{k+1}}{w^{k+1}-w^k} - {\lambda^{-1}}\iprod{z^k-w^k}{w^{k+1}-w^k}\\
		& = \left[{\frac{\sL+\rho}{2}-\frac{1}{\lambda}}\right]\norm{w^{k+1}-w^k}^2 + 	\iprod{\Fnor(z^k)+\lambda^{-1}(z^{k+1}-z^k)}{w^{k+1}-w^k},
	\end{align*}
which completes the proof.
\end{proof}
	\begin{lemma}
		\label[lemma]{lem:Fnor-bound}
		Under the conditions stated in \cref{lem:first-descent}, we have
        \begingroup
        \allowdisplaybreaks
		\begin{align*}
			\norm{\Fnor(z^{k+1})}^2&\leq \left[{1-\frac{n\alpha_k}{\lambda}}\right]^2\norm{\Fnor(z^k)}^2 + [{\sL+{\lambda^{-1}}}]^2 \norm{w^{k+1}-w^k}^2 + \frac{1}{\lambda^2}\norm{e^k}^2 \\
			&\quad+\frac{2}{\lambda}\left[{1-\frac{n\alpha_k}{\lambda}}\right]\iprod{\Fnor(z^k)}{\lambda({\nabla f(w^{k+1}) -\nabla f(w^k)}) - ({w^{k+1}-w^k}) + e^k } \\
			&\quad + {2}{\lambda^{-1}} \iprod{\nabla f(w^{k+1}) -\nabla f(w^k)}{e^k}  - {2}{\lambda^{-2}}\iprod{e^k}{w^{k+1}-w^k}.
		\end{align*}
        \endgroup
	\end{lemma}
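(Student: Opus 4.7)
\textbf{Proof proposal for Lemma \ref{lem:Fnor-bound}.}

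The core idea is to derive a useful linear decomposition of $\Fnor(z^{k+1})$ by exploiting the update rule \cref{eq:update-z} and then expand the squared norm. Using the definition \cref{eq:def-normal} and inserting $\pm \nabla f(w^k)$, $\pm \lambda^{-1}(z^k - w^k)$, I would write
\[
\Fnor(z^{k+1}) = \Fnor(z^k) + [\nabla f(w^{k+1}) - \nabla f(w^k)] + \lambda^{-1}(z^{k+1} - z^k) - \lambda^{-1}(w^{k+1}-w^k).
\]
Substituting $z^{k+1} - z^k = -n\alpha_k \Fnor(z^k) + e^k$ from \cref{eq:update-z}, this simplifies to the clean three-term identity $\Fnor(z^{k+1}) = A + B + C$, where
\[
A := \left[1-\tfrac{n\alpha_k}{\lambda}\right]\Fnor(z^k), \qquad B := \nabla f(w^{k+1}) - \nabla f(w^k) - \lambda^{-1}(w^{k+1}-w^k), \qquad C := \lambda^{-1} e^k.
\]

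Next I would expand $\|\Fnor(z^{k+1})\|^2 = \|A+B+C\|^2 = \|A\|^2 + \|B\|^2 + \|C\|^2 + 2\iprod{A}{B+C} + 2\iprod{B}{C}$. The term $\|A\|^2$ reproduces the first summand on the right-hand side directly. For $\|B\|^2$, I would apply the triangle inequality together with the Lipschitz bound from \ref{A1} to obtain
\[
\|B\| \leq \|\nabla f(w^{k+1}) - \nabla f(w^k)\| + \lambda^{-1}\|w^{k+1}-w^k\| \leq \left[\sL + \lambda^{-1}\right]\|w^{k+1}-w^k\|,
\]
which, after squaring, matches the second summand. The term $\|C\|^2 = \lambda^{-2}\|e^k\|^2$ accounts for the third summand. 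Finally, I would observe that $2\iprod{A}{B+C}$ coincides exactly with the penultimate line of the claimed bound (after pulling out the factor $\frac{2}{\lambda}[1-\frac{n\alpha_k}{\lambda}]$ and writing $B+C = \lambda^{-1}[\lambda(\nabla f(w^{k+1})-\nabla f(w^k)) - (w^{k+1}-w^k) + e^k]$), and that $2\iprod{B}{C}$ splits as
\[
2\lambda^{-1}\iprod{\nabla f(w^{k+1})-\nabla f(w^k)}{e^k} - 2\lambda^{-2}\iprod{e^k}{w^{k+1}-w^k},
\]
producing the final line of the stated inequality.

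The proof is essentially a careful bookkeeping exercise; there is no deep technical obstacle. The only minor subtlety is recognizing the right way to group the four ``natural'' terms $\Fnor(z^k)$, $\nabla f(w^{k+1})-\nabla f(w^k)$, $\lambda^{-1}e^k$, and $\lambda^{-1}(w^{k+1}-w^k)$ into three blocks $A,B,C$ so that (i) the Lipschitz bound for $\|B\|^2$ yields the coefficient $(\sL+\lambda^{-1})^2$ appearing in the target inequality without any loss, and (ii) the cross terms align with the structure of the statement. I would not treat the four terms separately, since that would produce extra mixed terms (notably $2\iprod{\nabla f(w^{k+1})-\nabla f(w^k)}{w^{k+1}-w^k}$) that do not appear on the right-hand side; the merger of $\nabla f(w^{k+1}) - \nabla f(w^k)$ and $-\lambda^{-1}(w^{k+1}-w^k)$ into a single block $B$ absorbs exactly that mixed term into the Lipschitz estimate.
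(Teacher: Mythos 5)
Your proposal is correct and follows essentially the same route as the paper: the identical decomposition of $\Fnor(z^{k+1})$ via \cref{eq:update-z}, followed by expanding the squared norm and bounding the block $\nabla f(w^{k+1})-\nabla f(w^k)-\lambda^{-1}(w^{k+1}-w^k)$ by $(\sL+\lambda^{-1})\|w^{k+1}-w^k\|$ using \ref{A1}. The only (immaterial) difference is that you merge that block before squaring, whereas the paper expands all four terms and then absorbs the mixed term $-2\lambda^{-1}\iprod{\nabla f(w^{k+1})-\nabla f(w^k)}{w^{k+1}-w^k}$ into the same Lipschitz estimate afterwards.
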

 
\begin{proof}
	Applying the definition of $\Fnor$ and \cref{eq:update-z}, we can expand $\Fnor(z^{k+1})$ as follows
		\begin{align*}
			\Fnor(z^{k+1})&=\nabla f(w^{k+1}) + {\lambda^{-1}}({z^{k+1}-w^{k+1}}) \\
   &= \Fnor(z^k) + ({\nabla f(w^{k+1})-\nabla f(w^k)}) + {\lambda^{-1}}({z^{k+1}-z^k+w^k-w^{k+1}})\\
			& = \left[{1-\frac{n\alpha_k}{\lambda}}\right]\Fnor(z^k) + ({\nabla f(w^{k+1})-\nabla f(w^k)}) - {\lambda^{-1}}({w^{k+1}-w^k}) + {\lambda^{-1}} e^k.
		\end{align*}	
	Using the $\sL$-smoothness of $f$, we can infer
 \begingroup
 \allowdisplaybreaks
	\begin{align*}
		&\hspace{-2ex}\norm{\Fnor(z^{k+1})}^2\\&=\left[{1-\frac{n\alpha_k}{\lambda}}\right]^2\norm{\Fnor(z^k)}^2 +\norm{\nabla f(w^{k+1}) - \nabla f(w^k)}^2  - \frac{2}{\lambda^2}\iprod{e^k}{w^{k+1}-w^k}\\
		&\quad + 2\left[{1-\frac{n\alpha_k}{\lambda}}\right]\iprod{\Fnor(z^k)}{({\nabla f(w^{k+1}) -\nabla f(w^k)}) - {\lambda^{-1}}({w^{k+1}-w^k})+{\lambda^{-1}}e^k }\\
		&\quad  + 2\iprod{\nabla f(w^{k+1}) -\nabla f(w^k)}{{\lambda^{-1}}e^k-{\lambda^{-1}}({w^{k+1}-w^k})} + \frac{1}{\lambda^2}\norm{w^{k+1}-w^k}^2 + \frac{1}{\lambda^2}\norm{e^k}^2	\\
		&\leq \left[{1-\frac{n\alpha_k}{\lambda}}\right]^2\norm{\Fnor(z^k)}^2 + \left[\sL+\frac{1}{\lambda}\right]^2\norm{w^{k+1}-w^k}^2 \\
		&\quad + {2}{\lambda^{-1}} \iprod{\nabla f(w^{k+1}) -\nabla f(w^k)}{e^k} + {\lambda^{-2}}\norm{e^k}^2 - 2{\lambda^{-2}}\iprod{e^k}{w^{k+1}-w^k}\\
		&\quad + \frac{2}{\lambda}\left[{1-\frac{n\alpha_k}{\lambda}}\right]\iprod{\Fnor(z^k)}{\lambda({\nabla f(w^{k+1}) -\nabla f(w^k)}) - ({w^{k+1}-w^k}) +e^k },
	\end{align*}
 \endgroup
	as desired.
\end{proof}

\subsection{\texorpdfstring{Proof of \cref{lem:est-err}}{Proof of Lemma 2}} \label{proof:est-err}
\begin{proof} We start with the proof of part (a). Using the definition of the error term $e^k$ in \cref{eq:error-e} and the triangle inequality, we have
 \begin{align}
     \nonumber \norm{e^k} &\leq \alpha_k {\sum}_{i=1}^{n}\norm{\Fnor(z^k_i) - \Fnor(z^k)}+\alpha_k \left\|{{\sum}_{i=1}^{n} \nabla f(w^k_i,\pi^k_i) - \nabla f(w^k,\pi^k_i)}\right\| \\
     \nonumber& \quad + \alpha_k \left\|{{\sum}_{i=1}^{n} \nabla f(w^k,\pi^k_i) - \nabla f(w^k_i)} \right\| \\
	\nonumber &\leq \alpha_k {\sum}_{i=1}^{n} \big[\norm{\Fnor(z^k_i) - \Fnor(z^k)}+ \sL \norm{ w^k_i - w^k} \big] + \alpha_k \left\|{{\sum}_{i=1}^{n} \nabla f(w^k) - \nabla f(w^k_i)}\right\| \\
	&\leq \alpha_k {\sum}_{i=1}^{n}\norm{\Fnor(z^k_i) - \Fnor(z^k)}+ \frac{2\sL}{1-\lambda\rho} \cdot \alpha_k {\sum}_{i=1}^{n}\norm{z^k_i - z^k},
 \label{eq:lem-est-err-est-1}
 \end{align}
	where we applied the Lipschitz continuity of the proximity operator, $\| w^k_i - w^k \| \leq \| z^k_i - z^k\|/(1-\lambda \rho)$, in the last line, cf. \cref{eq:prox-coco}. Similarly, it holds that
	%
    \begingroup
    \allowdisplaybreaks
		\begin{align}
			\nonumber\norm{\Fnor(z^k_i) - \Fnor(z^k)} &= \norm{\nabla f(w^k_i) - \nabla f(w^k) + \lambda^{-1}[(z^k_i-z^k)-(w^k_i-w^k)]} \\
			&\hspace{-4ex}\leq (\sL+\lambda^{-1})\norm{w^k_i-w^k} + \lambda^{-1} \norm{z^k_i-z^k}\leq \frac{\sL + 2\lambda^{-1}-\rho}{1-\lambda\rho}\norm{z^k_i-z^k}. \label{eq:lem-4-est-2}
		\end{align}
    \endgroup
	Combining \cref{eq:lem-4-est-2} and \cref{eq:lem-est-err-est-1}, we readily obtain
	\begin{equation}
		\label{eq:lem-est-err-est-3}
		\norm{e^k} \leq \sC_r \alpha_k {\sum}_{i=1}^{n} \norm{z^k_i-z^k} \quad \text{where} \quad \sC_r := \frac{3\sL + 2\lambda^{-1}-\rho}{1-\lambda\rho} =\frac{\sqrt{\CL}}{2}.
	\end{equation} 
 Based on \cref{eq:norm-am-update}, \cref{eq:update-z} and applying \cref{eq:lem-4-est-2}, it follows
	\begin{align}
    \nonumber\norm{z^k_{i+1}-z^k} & =\alpha_k \Big \| i \Fnor(z^k) + {\sum}_{j=1}^{i} \big[\Fnor(z^k_j) - \Fnor(z^k) + \nabla f(w^k_j,\pi^k_j) - \nabla f(w^k_j) \big] \Big\|\\ \nonumber
    &\leq i \alpha_k \norm{\Fnor(z^k)} + {(\sL + 2\lambda^{-1}-\rho)}{(1-\lambda\rho)^{-1}} \alpha_k {\sum}_{j=1}^{i}\norm{z^k-z^k_j} \\ & \hspace{4ex} + \alpha_k \Big\|{\sum}_{j=1}^{i} \big[\nabla f(w^k_j,\pi^k_j) - \nabla f(w^k_j) \big]\Big\|
   \label{eq:lem-est-err-est-4}
	\end{align}		
 for all $i=0,\ldots,n-1$. We now continue with the last term in the estimate \cref{eq:lem-est-err-est-4}. Setting $\Upsilon_i := \norm{{\sum}_{j=1}^{i}[\nabla f(w^k,\pi^k_j) - \nabla f(w^k)] }$ and using the triangle inequality, the $\sL$-Lipschitz continuity of the component gradients $\nabla f(\cdot,i)$, $i \in [n]$, and the $(1-\lambda\rho)^{-1}$-Lipschitz continuity of the proximity operator $\prox{\lambda\vp}$, it holds that
 \begin{equation}
     \label{eq:lem-est-err-est-5}
     \begin{aligned}
    &\hspace{-3ex}\Big\|{\sum}_{j=1}^{i}\big[\nabla f(w^k_j,\pi^k_j) - \nabla f(w^k_j)\big]\Big\|
		\\& \leq {\sum}_{j=1}^{i} \big[\norm{\nabla f(w^k_j,\pi^k_j) - \nabla f(w^k,\pi^k_j)} + \norm{\nabla f(w^k) - \nabla f(w^k_j)} \big] + \Upsilon_i  \\
		&\leq  2\sL {\sum}_{j=1}^{i} \norm{w^k_j-w^k} + \Upsilon_i \leq \frac{2\sL}{1-\lambda\rho} {\sum}_{j=1}^{i} \norm{z^k_j-z^k} + \Upsilon_i. 
     \end{aligned}
 \end{equation}
	Hence, combining \cref{eq:lem-est-err-est-4} and \cref{eq:lem-est-err-est-5}, we can infer
	\begin{equation}
		\label{eq:lem-est-err-est-6}
		\norm{z^k_{i+1}-z^k}\leq \alpha_k \Big[i \norm{\Fnor(z^k)} +  \sC_r {\sum}_{j=1}^{i}\norm{z^k_j-z^k} + \Upsilon_{i} \Big] \quad \forall~i \in [n-1].
	\end{equation}
Consequently, summing the term $\norm{z^k_i-z^k}$ from $i=1$ to $n$ and using \cref{eq:lem-est-err-est-6}, we obtain
		\begin{align*}
			{\sum}_{i=1}^{n} \norm{z^k_i-z^k} & = {\sum}_{i=1}^{n-1} \norm{z^k_{i+1}-z^k} \\ &\leq  \sC_r\alpha_{k} {\sum}_{i=1}^{n-1}{\sum}_{j=1}^{i}\norm{z^k-z^k_j} + \alpha_k {\sum}_{i=1}^{n-1}[i\norm{\Fnor(z^k)} + \Upsilon_{i}]  \\
			&\leq \sC_rn \alpha_{k} {\sum}_{i=1}^{n} \norm{z^k_i-z^k} + \frac{n(n-1)\alpha_k}{2} \norm{\Fnor(z^k)}  +  \alpha_k {\sum}_{i=1}^{n-1}\Upsilon_{i}.
		\end{align*}
	Rearranging the terms in the previous inequality, it follows
	\begin{equation*}
			{\sum}_{i=1}^{n} \norm{z^k_i-z^k}  \leq \frac{ \alpha_k [\frac{n^2-n}{2}\norm{\Fnor(z^k)} + {\sum}_{i=1}^{n-1}\Upsilon_{i}]}{1-\sC_rn \alpha_k} \leq \alpha_k \Big[ n^2\norm{\Fnor(z^k)} + 2{\sum}_{i=1}^{n-1}\Upsilon_{i} \Big],
	\end{equation*}
 where the last line is due to $\sC_rn \alpha_k\leq \frac12$. Inserting this estimate into \cref{eq:lem-est-err-est-3}, we obtain 
	\[
			\norm{e^k} \leq \sC_r\alpha_k^2 \Big[n^2\norm{\Fnor(z^k)} +2{\sum}_{i=1}^{n-1}\Upsilon_{i} \Big].
	\]
Taking squares on both sides of this inequality and using $(\sum_{i=1}^{j} a_i)^2 \leq j\sum_{i=1}^{j} a_i^2$ with $j=2$ and $j=n-1$, this yields
\begin{equation}
    \label{eq:lem-est-use-later}
        \norm{e^k}^2 \leq 2\sC_r^2  \alpha_k^4 \Big[n^4\norm{\Fnor(z^k)}^2 + 4(n-1){\sum}_{i=1}^{n-1}\Upsilon_{i}^2 \Big]. 
\end{equation}
Thus, noticing that $\Upsilon_{i}^2 \leq i{\sum}_{j=1}^{n}\norm{\nabla f(w^k,j) - \nabla f(w^k) }^2=in\sigma_k^2$ and $\sum_{i=1}^{n-1} i = \frac{n(n-1)}{2}$, we finally obtain 
\[
 \norm{e^k}^2 \leq 2\sC_r^2 n^4 \alpha_k^4 [\norm{\Fnor(z^k)}^2 + 2 \sigma_k^2].
\]

We continue with the proof of part (b). 
Taking the conditional expectation $\Exp_k[\cdot]$ in \cref{eq:lem-est-use-later}, we have
\begin{equation}\label{eq:lem-est-(b)-1}
    \Exp_k[\norm{e^k}^2] \leq  2\sC_r^2 \alpha_k^4 \Big[n^4\norm{\Fnor(z^k)}^2 + 4(n-1){\sum}_{i=1}^{n-1}\Exp_k[\Upsilon_{i}^2] \Big].
\end{equation}
According to \cref{lem:sample-without-re}, it holds that
\begin{align*}
\Exp_k[\Upsilon_{i}^2] &= \Exp_k\Big[\Big\|{\sum}_{j=1}^{i} (\nabla f(w^k,\pi^k_j) - \nabla f(w^k) )\Big\|^2\Big] \\ &= \frac{i(n-i)}{n(n-1)} {\sum}_{j=1}^{n} \| \nabla f(w^k,j) - \nabla f(w^k) \|^2 = \frac{i(n-i)\sigma_k^2}{n-1}\leq \frac{n^2\sigma_k^2}{4(n-1)}.
\end{align*}
Using this in \cref{eq:lem-est-(b)-1}, it follows $
 \Exp_k[\norm{e^k}^2] \leq  2\sC_r^2 n^4 \alpha_k^4 [\norm{\Fnor(z^k)}^2 + \frac{\sigma_k^2}{n} \big]$.
\end{proof}
\vspace{-4ex}

\subsection{\texorpdfstring{Proof of \cref{lem:merit-descent-0}}{Proof of Lemma 5}}\label{subsec:proof-lem:merit-descent-0}

\begin{proof} Applying \cref{lem:first-descent,lem:Fnor-bound}, it follows 
	\begingroup
	\allowdisplaybreaks
	\begin{align}
		\label{eq:lem-3-est-1}
		&\hspace{-1ex} \mer(z^{k+1}) - \mer(z^k) \notag \\&\leq \left[{\frac{\sL+\rho}{2}-\frac{1}{\lambda}}\right]\norm{w^{k+1}-w^k}^2 + \iprod{\Fnor(z^k)+\lambda^{-1}(z^{k+1}-z^k)}{w^{k+1}-w^k} \notag \\&\quad  + \frac{\tau\lambda}{2}[{\norm{\Fnor(z^{k+1})}^2 -\norm{\Fnor(z^k)}^2}] \notag  \\
		&\leq \left[{\frac{\sL+\rho}{2}-\frac{1}{\lambda}}\right]\norm{w^{k+1}-w^k}^2 + 	\iprod{\Fnor(z^k)+\lambda^{-1}(z^{k+1}-z^k)-\lambda^{-1}\tau e^k}{w^{k+1}-w^k} \notag  \\
		&\quad  + \frac{\tau\lambda}{2}\left[\left({1-\frac{n \alpha_k}{\lambda}}\right)^2-1\right]\norm{\Fnor(z^k)}^2 - \tau\left[{1-\frac{n \alpha_k}{\lambda}}\right]\iprod{\Fnor(z^k)}{{w^{k+1}-w^k}}  \notag  \\ &\quad + \tau\left[{1-\frac{n \alpha_k}{\lambda}}\right]\iprod{\Fnor(z^k)}{\lambda({\nabla f(w^{k+1}) -\nabla f(w^k)})+ e^k }  \notag  \\
		&\quad  +\frac{\tau\lambda}{2}[\sL+\lambda^{-1}]^2\norm{w^{k+1}-w^k}^2 + \tau \iprod{\nabla f(w^{k+1}) -\nabla f(w^k)}{e^k} + \frac{\tau}{2\lambda}\norm{e^k}^2 \notag  \\
		& = -\tau n \alpha_k\left[{1-\frac{n \alpha_k}{2\lambda}}\right]\norm{\Fnor(z^k)}^2 + \iprod{h^k}{w^{k+1}-w^k}\notag  \\
		&\quad  + \tau\left[{1-\frac{n \alpha_k}{\lambda}}\right] \iprod{\Fnor(z^k)}{\lambda({\nabla f(w^{k+1}) - \nabla f(w^k)}) + e^k} + \tau\iprod{\nabla f(w^{k+1}) - \nabla f(w^k)}{e^k} \notag  \\
		&\quad + \frac{\tau}{2\lambda}\norm{e^k}^2 + \left[ \frac{\sL\tau(\sL\lambda+2) + (\sL+\rho)}{2} - \frac{2-\tau}{2\lambda}\right]\norm{w^{k+1}-w^k}^2,
	\end{align}
	\endgroup
 where $h^k := \Fnor(z^k)+{\lambda^{-1}}(z^{k+1}-z^k)-\tau[{1-\frac{n \alpha_k}{\lambda}}] \Fnor(z^k) - {\lambda^{-1}}\tau e^k$. 
Furthermore, according to the definition of $\Fnor$ and $e^k$ in \cref{eq:update-z} and \cref{eq:error-e}, we have 
		\begin{align*}
			 h^k &= \prt{1-\tau}\Fnor(z^k) + \frac{1}{\lambda}(z^{k+1}-z^k) - \frac{\tau}{\lambda}[{e^k - n \alpha_k \Fnor(z^k)}] \\
			&= 	\prt{1-\tau}\Fnor(z^k) + \frac{1-\tau}{\lambda}(z^{k+1}-z^k)=  	\prt{1-\tau}\left[{\frac{1}{\lambda} - \frac{1}{n \alpha_k}}\right](z^{k+1}-z^k)  + \frac{1-\tau}{n \alpha_k}e^k. \notag
		\end{align*}
	Inserting this expression into the estimate \cref{eq:lem-3-est-1}, we obtain
	\begingroup
    \allowdisplaybreaks
		\begin{align}
        \label{eq:lem-3-est-3-1}
			&\hspace{-.5ex} \mer(z^{k+1}) - \mer(z^k)\\ \notag & \leq -\tau n \alpha_k\left[{1-\frac{n \alpha_k}{2\lambda}}\right]\norm{\Fnor(z^k)}^2 -  \prt{1-\tau}\left[{\frac{1}{n \alpha_k} -\frac{1}{\lambda} }\right] \iprod{z^{k+1}-z^k}{w^{k+1}-w^k} + \frac{\tau}{2\lambda}\norm{e^k}^2 \\ \notag
			&\quad + \left[ \frac{\sL\tau(\sL\lambda+2) + (\sL+\rho)}{2} - \frac{2-\tau}{2\lambda} \right] \norm{w^{k+1}-w^k}^2  + \frac{1-\tau}{n \alpha_k}\iprod{e^k}{w^{k+1}-w^k} \\ \notag & \quad + \tau\iprod{\nabla f(w^{k+1}) - \nabla f(w^k)}{e^k}  
			+ \tau\left[{1-\frac{n \alpha_k}{\lambda}}\right] \iprod{\Fnor(z^k)}{\lambda(\nabla f(w^{k+1}) - \nabla f(w^k)) + e^k}.
		\end{align}
	\endgroup
    We now estimate and bound the different terms appearing in \cref{eq:lem-3-est-3-1}. First, due to $\tau\in (0,1)$ and $ n \alpha_k\in (0,\lambda)$ for all $k$, the coefficient in front of the inner product $\iprod{z^{k+1}-z^k}{w^{k+1}-w^k}$ is negative. Hence, using \cref{eq:prox-coco}, we have
	\[- \iprod{z^{k+1}-z^k}{w^{k+1}-w^k} \leq -(1-\lambda\rho) \norm{w^{k+1}-w^k}^2. \]
    Applying the Cauchy-Schwartz inequality, the Lipschitz continuity of $\nabla f$, Young's inequality---$\iprod{a}{b}\leq \frac{\varepsilon}{2} \|a\|^2 + \frac{1}{2\varepsilon}\|b\|^2,\ a,b\in\R^d$ and $\varepsilon>0$---(with $a = w^{k+1}-w^k$, $b =e^k$, and $\varepsilon = 1$), we further obtain 
    \[\iprod{\nabla f(w^{k+1}) - \nabla f(w^k)}{e^k}  \leq \sL \norm{w^{k+1}-w^k}\norm{e^k} \leq \frac{\sL}{2} \|w^{k+1}-w^k\|^2 + \frac{\sL}{2}\|e^k\|^2\] 
    and, similarly, $\iprod{e^k}{w^{k+1}-w^k} \leq \frac12\|w^{k+1}-w^k\|^2 + \frac{1}{2}\|e^k\|^2$. Moreover, setting $a =\Fnor(z^k)$,  $b=\lambda(\nabla f(w^{k+1}) - \nabla f(w^k))$ and $\varepsilon =n \alpha_k$ in Young's inequality and applying the Lipschitz continuity of $\nabla f$, it holds that
	\begin{align*}
		\iprod{\Fnor(z^k)}{\lambda({\nabla f(w^{k+1}) - \nabla f(w^k)})} &\leq \frac{n \alpha_k}{2}\norm{\Fnor(z^k)}^2 + \frac{\lambda^2}{2 n \alpha_k}\norm{\nabla f(w^{k+1}) - \nabla f(w^k)}^2\\
		&\leq \frac{n \alpha_k}{2}\norm{\Fnor(z^k)}^2 + \frac{\sL^2\lambda^2}{2n \alpha_k}\norm{w^{k+1}-w^k}^2.
	\end{align*}
	Repeating this step once more with $a =\Fnor(z^k)$, $b=e^k$ and $\varepsilon=\frac{n \alpha_k}{2}$, we have $\iprod{\Fnor(z^k)}{e^k} \leq \frac{n \alpha_k}{4}\norm{\Fnor(z^k)}^2 + \frac{1}{n \alpha_k}\norm{e^k}^2$. Plugging these different estimates into \cref{eq:lem-3-est-3-1}, we can conclude
\begingroup
\allowdisplaybreaks
\begin{align*}
&\hspace{-.5ex} \mer(z^{k+1}) - \mer(z^k) \\
& \leq -\tau n \alpha_k\left[{1-\frac{n \alpha_k}{2\lambda}}-\frac34\left(1-\frac{n \alpha_k}{\lambda}\right)\right]\norm{\Fnor(z^k)}^2 \\ & \quad + \left[\frac{\tau}{2\lambda} + \frac{1-\tau}{2n\alpha_k} + \frac{\sL\tau}{2} + \frac{\tau}{n\alpha_k}\left(1-\frac{n \alpha_k}{\lambda}\right) \right] \|e^k\|^2 + \left[ \frac{\sL\tau(\sL\lambda+2) + (\sL+\rho)}{2} - \frac{2-\tau}{2\lambda} \;\;\dots \right. \\ & \quad \left. \dots  - (1-\tau)(1-\lambda\rho)\left({\frac{1}{n \alpha_k} -\frac{1}{\lambda} }\right) + \frac{1-\tau}{2n\alpha_k}+\frac{\sL\tau}{2} + \frac{\sL^2\lambda^2\tau}{2n\alpha_k}\left(1-\frac{n\alpha_k}{\lambda}\right)\right] \norm{w^{k+1}-w^k}^2 \\
& = -\frac{\tau n \alpha_k}{4}\left[{1+\frac{n \alpha_k}{\lambda}}\right]\norm{\Fnor(z^k)}^2 + \left[\frac{1+\tau}{2n\alpha_k} + \frac{\sL\tau}{2} -\frac{\tau}{2\lambda} \right] \|e^k\|^2 \\
& \quad + \left[\frac{3\sL\tau}{2}+\frac{\sL+\rho}{2}-\frac{\tau+2\lambda\rho(1-\tau)}{2\lambda} - \frac{(1-\tau)(1-2\lambda\rho)-\sL^2\lambda^2\tau}{2n\alpha_k}\right] \|w^{k+1}-w^k\|^2.
\end{align*}
\endgroup
By assumption, it holds that $\lambda\rho < \frac14$ and we have $\tau = \frac{1-4\lambda\rho}{2(1-2\lambda\rho+\sL^2\lambda^2)} \leq \frac12$. We then may infer 
\begin{align*}
& \frac{3\sL\tau}{2}+\frac{\sL+\rho}{2}-\frac{(1-2\lambda\rho)\tau + 2\lambda\rho}{2\lambda} - \frac{(1-\tau)(1-2\lambda\rho)-\sL^2\lambda^2\tau}{2n\alpha_k} \leq \frac{5\sL}{4}  - \frac{1}{4n\alpha_k}
\end{align*}
and $\frac{1+\tau}{2n\alpha_k} + \frac{\sL\tau}{2} -\frac{\tau}{2\lambda} \leq \frac{3}{4n\alpha_k} + \frac{\sL}{4}$. The choice of the step sizes $\{\alpha_k\}_k$ implies $\frac{5\sL}{4} \leq \frac{1}{8n\alpha_k}$ and hence, it follows $\frac{5\sL}{4}-\frac{1}{4n\alpha_k} \leq - \frac{1}{8n\alpha_k}$ and $\frac{3}{4n\alpha_k} + \frac{\sL}{4} \leq \frac{1}{n\alpha_k}$. Using these bounds in the previous estimate, we finally obtain
 \begin{equation*}
    \label{eq:lem-3-est-5}
    \begin{aligned}
        \mer(z^{k+1})-\mer(z^k) + \frac{1}{8 n \alpha_k}\norm{w^{k+1}-w^k}^2
        &\leq - \frac{\tau  n \alpha_k}{4}\left[{1+\frac{ n \alpha_k}{\lambda}}\right]\norm{\Fnor(z^k)}^2 + \frac{1}{n\alpha_k} \norm{e^k}^2,
    \end{aligned}
\end{equation*}
as desired. \end{proof} 
\vspace{-4ex}

\section{\texorpdfstring{Global Convergence: Proof of \cref{thm:global_convergence}}{Global Convergence: Proof of Theorem 12}} \label{proof:global_convergence}

\begin{proof} Invoking \cref{lem:merit-descent-1}~(a), we have  
 \[
 \mer(z^{k+1}) \leq \mer(z^k) - \frac{\tau n \alpha_k}{4}\norm{\Fnor(z^k)}^2 + \sD \alpha_k^3,
 \]
	where $\sD:= n^3\Delta(n^3{\textstyle \sum}_{i=1}^{\infty}\alpha_i^3) < \infty$. Thus, the sequence $\{\mer(z^k)\}_k$ satisfies a supermartingale-type recursion. Consequently, applying \cref{Thm:super-martingale-convergece} and using the lower bound $\mer(z^k)\geq \psi(w^k)\geq \psilb$ (as stated in assumption \ref{A3}) and the condition $\sum_{k=1}^\infty \alpha_k^3 < \infty$ (as stated in \cref{eq:ass-step}), we can infer $\mer(z^k) \to \bar \psi$, $k \to \infty$ for some $\bar \psi \in \R$ and
	\begin{equation} \label{eq:esti-t2} 
		 {\sum}_{k=1}^\infty \alpha_k \norm{\Fnor(z^k)}^2  < \infty.
	\end{equation}
	Due to ${\sum}_{k=1}^\infty \alpha_k = \infty$, this immediately implies $\liminf_{k\to\infty}\|\Fnor(z^k)\|=0$. In order to show $\lim_{k\to\infty}\|\Fnor(z^k)\|=0$, let us, on the contrary, assume that $\{\|\Fnor(z^k)\|\}_{k}$ does not converge to zero. Then, there exist $\varepsilon>0$ and two infinite subsequences $\{t_j\}_{j}$ and $\{\ell_j\}_{j}$ such that  $t_j<\ell_j \leq t_{j+1}$,
	\begin{equation}\label{eq:construct subsequence}
		\|\Fnor({z}^{t_j})\|\geq 2\varepsilon,\quad \|\Fnor({z}^{\ell_j})\|<\varepsilon,\quad\text{and}\quad\|\Fnor({z}^{k})\|\geq\varepsilon
	\end{equation}
	for all $k=t_j+1,\dots,\ell_j-1$. Combining this observation with \cref{eq:esti-t2}, this yields
	\[
	\infty>  {\sum}_{k=1}^{\infty}\alpha_{k}\|\Fnor(z^{k})\|^2 \geq \varepsilon^2{\sum}_{j=1}^{\infty}{\sum}_{k=t_j}^{\ell_j-1}\alpha_{k},
	\]
	which implies $\lim_{j\rightarrow \infty} \ \beta_j := {\sum}_{k=t_j}^{\ell_j-1}\alpha_{k} = 0$. Next, applying the triangle and Cauchy-Schwarz inequality, we obtain
	\begingroup
    \allowdisplaybreaks
	\begin{align}
    \nonumber
		\|{z}^{\ell_j} - {z}^{t_j}\|  & \leq {\sum}_{k=t_j}^{\ell_j-1}\sqrt{\alpha_{k}} \left[\frac{\|{z}^{k+1}-z^{k}\|}{\sqrt{\alpha_{k}}}\right] \\ & \leq \left[ {\sum}_{k=t_j}^{\ell_j-1} \alpha_{k} \cdot {\sum}_{k=t_j}^{\ell_j-1} \frac{\|z^{k+1}-z^k\|^2}{\alpha_{k}} \right]^\frac12 \leq \sqrt{\beta_j} \cdot \left[ {\sum}_{k=1}^{\infty} \frac{\|z^{k+1}-z^{k}\|^2}{\alpha_{k}}\right]^\frac12.
    \label{eq:contradict-am}
	\end{align}
 	\endgroup
	Using the recursion \cref{eq:update-z} and \cref{lem:var-bound,lem:est-err}, we further have
		\begin{align*}
			{\sum}_{k=1}^{\infty} \alpha_k^{-1}{\|z^{k+1}-z^{k}\|^2} &\leq 2{\sum}_{k=1}^{\infty} [n^2\alpha_k\norm{\Fnor(z^k)}^2 + \alpha_k^{-1}\norm{e^k}^2]\\
			&\hspace{-16ex}\leq 2 {\sum}_{k=1}^{\infty}[(n^2 + \CL n^4 \alpha_k^2)\alpha_k\norm{\Fnor(z^k)}^2 + \CL n^4 \alpha_k^3\sigma_k^2]
			\\&\hspace{-16ex}\leq
			2(n^2 + \CL n^2 \bar \alpha^2){\sum}_{k=1}^{\infty}\alpha_k\norm{\Fnor(z^k)}^2 + 2\sD n{\sum}_{k=1}^{\infty} \alpha_k^3
			<\infty,
		\end{align*}
	where the last line is by \cref{eq:ass-step}, \cref{eq:esti-t2} and $\CL\sigma_k^2 \leq \Delta(n^3\sum_{i=1}^\infty\alpha_i^3) = \frac{\sD}{n^3}$ (cf. \cref{lem:merit-descent-1}~(a)). Hence, taking the limit $j \rightarrow \infty$ in \cref{eq:contradict-am}, it follows 
	\[ 
	{\lim}_{j\rightarrow \infty} \ \|{z}^{\ell_j} - {z}^{t_j}\| = 0.
	\]
	Moreover, applying \cref{eq:construct subsequence}, the inverse triangle inequality, and \cref{eq:lem-4-est-2}, it holds that
	\begin{equation*} 
		\varepsilon\leq |\|\Fnor({z}^{\ell_j})\|-\|\Fnor({z}^{t_j})\| | \leq \|\Fnor({z}^{\ell_j})-\Fnor({z}^{t_j})\|\leq \frac{\sL+2\lambda^{-1}-\rho}{1-\lambda\rho}\|{z}^{\ell_j}-{z}^{t_j}\|.
	\end{equation*}
    Taking the limit $j \to \infty$, we reach a contradiction
 and thus, we conclude $\lim_{k\to\infty}\|\Fnor(z^k)\|=0$. Finally, recalling $\mer(z^k):=\psi(z^k)+\frac{\lambda\tau}{2}\|\Fnor(z^k)\|$ and $\mer(z^k)\to \bar \psi$, we have
	\[\bar \psi=\lim_{k\to\infty}\mer(z^k)=\lim_{k\to\infty}\psi(w^k) + \lim_{k\to\infty}\frac{\lambda\tau}{2}\|\Fnor(z^k)\| = \lim_{k\to\infty}\psi(w^k). \]
	This completes the proof. \end{proof}
\vspace{-4ex}

\section{Strong Convergence}
\subsection{\texorpdfstring{Proof of \cref{lemma:limit point set}}{Proof of Lemma 19}}\label{proof:limit point set}
\begin{proof} By the definition of the normal map, we have $z^k = \lambda \Fnor(z^k) + w^k - \lambda \nabla f(w^k)$. Since $\Fnor(z^k)\to 0$ (cf. \cref{thm:global_convergence}) and $\{w^k\}_k$ is bounded, we can conclude that the sequence $\{z^k\}_k$ is bounded. This verifies (a). 

The inclusions $\mathcal A_z \subseteq \{z: \Fnor(z) = 0\}$ and $\mathcal A_w \subseteq \crit{\psi}$ follow directly from \cref{thm:global_convergence}. Next, let $\bar w \in \mathcal A_w$ with $w^{\ell_k} \to \bar w$ be arbitrary. Then, we have $z^{\ell_k} \to \bar w - \lambda\nabla f(\bar w) =: \bar z$ and $\bar w = \lim_{k \to \infty} \proxi{\lambda}(z^{\ell_k}) = \proxi{\lambda}(\bar z)$. Conversely, let $\bar w = \proxi{\lambda}(\bar z)$ with $\bar z \in \mathcal A_z$ be given. Thus, by definition, there is $z^{\ell_k} \to \bar z$ and using the Lipschitz continuity of the proximity operator, it holds that $\|w^{\ell_k}-\bar w\|=\|\proxi{\lambda}(z^{\ell_k})-\proxi{\lambda}(\bar z)\|\leq \|z^{\ell_k}-\bar z\|/(1-\lambda \rho) \to 0$. This verifies $\bar w \in \mathcal A_w$ and finishes the proof of part (b).

For part (c) and due to the structure of $\mathcal A_w$, we may pick some arbitrary $\bar z\in\mathcal{A}_z$ and $\bar w\in\mathcal{A}_w$ with $\bar w=\proxi{\lambda}(\bar z)$. 
Let $\{\ell_k\}_k$ be a subsequence such that $z^{\ell_k} \to \bar z$ and $w^{\ell_k} \to \bar w$.   
Applying \cref{thm:global_convergence}, we conclude  $\psi(w^k) \to \bar \psi\in\R$. Since $z \mapsto \psi(\proxl(z))$ is continuous, we further have $\bar \psi= \lim_{k\to\infty} \psi(\proxl(z^{\ell_k})) = \psi(\proxl(\bar z)) $. 
This implies $\lim_{k\to\infty} \psi(w^k) = \psi(\bar w) = \bar \psi$ for all $\bar w\in \mathcal{A}_w$. Finally, we have $\mer(\bar z) = \psi(\bar w) + \frac{\tau\lambda}{2}\|\Fnor(\bar z)\|^2=\psi(\bar w)=\bar \psi$ for all $\tau>0$, i.e., $\mer$ is constant on $\mathcal A_z$.
\end{proof}

\subsection{\texorpdfstring{Proof of \cref{thm:finite-sum}}{Proof of Theorem 21}}\label{proof:finite-sum}
\begin{proof}
The properties in \cref{lemma:limit point set} allow us to apply the uniformization technique in \cite[Lemma 6]{BolSabTeb14} to the KL-type inequality derived in \cref{lem:KL-mer}. Specifically, there exist $\hat\zeta, \hat c>0$, $\hat\eta \in (0,1]$, and $\hat\theta \in [\frac12,1)$ such that for all $\bar z \in \mathcal A_z$ and $z  \in V_{\hat\zeta,\hat\eta} := \{ z \in \Rn : \mathrm{dist}(z,\mathcal A_z) < \hat\zeta \} \cap \{z \in \Rn : 0 < |\mer(z)-\bar\psi| < \hat\eta \}$, we have
%
%
%
%
%
\[
\hat c \cdot \norm{\Fnor(z)} \geq |\mer(z)-\bar \psi|^{\hat\theta} \geq |\mer(z)-\bar \psi|^\vartheta,
\]
where $\vartheta := \max\{\hat\theta,\xi\}$ and $\bar \psi = \psi(\bar w) = \mer(\bar z)$ (for all $\bar z \in \mathcal A_z$). Setting $\addes(s):=\frac{\hat c}{1-\vartheta}\cdot s^{1-\vartheta}$, this can be written as
\begin{equation}
    \label{eq:uni-kl}
    \addes^\prime(|\mer(z)-\bar \psi|)\cdot  \norm{\Fnor(z)} \geq 1.
\end{equation}
	%
Since $\dist(z^k,\mathcal A_z)\to0$ and $\mer(z^k)\to\bar \psi$, there exists $\bar k \geq 1$ such that $z^k\in V_{\hat\zeta,\hat\eta}$ for all $k\geq \bar k$. 
	Applying \cref{lem:merit-descent-1} (a), it holds that
	\[
		\mer(z^{k+1}) \leq \mer(z^k) - \frac{\tau n \alpha_k}{4}\norm{\Fnor(z^k)}^2 -\frac{1}{8 n \alpha_k}\norm{w^{k+1}-w^k}^2 + \sD \alpha_k^3,
	\]
	where $\sD = n^3\Delta(n^3{\textstyle \sum}_{i=1}^{\infty}\alpha_i^3) < \infty$. Defining 
 $u_k := \sD {\sum}_{j=k}^{\infty}\alpha_{j}^3$ and adding $u_{k+1}$ on both sides of this inequality, it follows
 \begin{equation}
		\label{eq:descent}
		\mer(z^{k+1})+u_{k+1}\leq \mer(z^k)+u_k-\frac{\tau n \alpha_k}{4}\norm{\Fnor(z^k)}^2 -\frac{1}{8 n \alpha_k}\norm{w^{k+1}-w^k}^2.
	\end{equation}
 In the following, without loss of generality, let us assume $z^k \notin \mathcal A_z$ or $u_k \neq 0$ for all $k \geq \bar k$.
 Let us set $\delta_k :=  \addes(\mer(z^k) - \bar \psi + u_k)$.
	Due to the monotonicity of the sequence $\{\mer(z^k)+u_k\}_{k}$ and $\mer(z^k)+u_k \rightarrow \bar \psi$,  $\delta_k$ is well defined as $\mer(z^k) - \bar \psi + u_k \geq 0$ for all $k \geq 1$.  
	Hence, for all $k \geq \bar k$, we obtain 
    \begingroup
    \allowdisplaybreaks
	\begin{align}\label{eq:thm-kl-1}
		&\hspace{-1ex}\nonumber \delta_{k} - \delta_{k+1} \geq \addes^\prime(\mer(z^{k}) - \bar \psi + u_k) [\mer(z^k) + u_k - \mer(z^{k+1}) - u_{k+1} ] \\ 
		\nonumber & \geq \addes^\prime(|\mer(z^k) - \bar \psi| + u_k)  [\mer(z^k) + u_k - \mer(z^{k+1}) - u_{k+1} ]  \\
		\nonumber &\geq \addes^\prime(|\mer(z^k) - \bar \psi| + u_k) \left[{  \frac{\tau n \alpha_k}{4}\norm{\Fnor(z^k)}^2 + \frac{1}{8 n \alpha_k}\norm{w^{k+1}-w^k}^2}\right]\\
		&\geq   \frac{\frac{\tau n \alpha_k}{4}\norm{\Fnor(z^k)}^2 + \frac{1}{8 n \alpha_k}\norm{w^{k+1}-w^k}^2}{[\addes^\prime(|\mer(z^k) - \bar \psi|)]^{-1} + [\addes^\prime(u_k)]^{-1}} 
		\geq \frac{\tau n}{4}  \frac{ \alpha_k^2\norm{\Fnor(z^k)}^2 + \frac{1}{2\tau n^2 }\norm{w^{k+1}-w^k}^2 }{\alpha_k\norm{\Fnor(z^k)} + \alpha_k[\addes^\prime(u_k)]^{-1}},
	\end{align}
 \endgroup
	where the first inequality uses the concavity of $\addes$, the second inequality is due to monotonicity of $s \mapsto \addes^\prime(s) = \hat c s^{-\vartheta}$,  the third inequality is from \cref{eq:descent}, the fourth inequality follows from subadditivity of $[\addes^\prime(s)]^{-1}$---i.e., $[\addes^\prime(s_1+s_2)]^{-1}\leq [\addes^\prime(s_1)]^{-1}+ [\addes^\prime(s_2)]^{-1}$ for all $s_1,s_2\geq 0$---and the last inequality applies the KL inequality \cref{eq:uni-kl}. Rearranging the terms in \cref{eq:thm-kl-1}, this further yields 
	\begin{equation}\label{eq:thm-kl-bound}
		\begin{aligned}
			&\hspace{-1ex}\frac{4(\delta_k-\delta_{k+1})}{\tau n} \cdot [{\alpha_k\norm{\Fnor(z^k)} + \alpha_k[\addes^\prime(u_k)]^{-1}}] 
			\\&\geq \frac{1}{2\tau n^2}\norm{w^{k+1}-w^k}^2 + \alpha_k^2\norm{\Fnor(z^k)}^2 \geq \frac{1}{2} \Big[\frac{1}{\sqrt{2\tau}n} \norm{w^{k+1}-w^k} +\alpha_k\norm{\Fnor(z^k)}\Big]^2
		\end{aligned}
	\end{equation}
	for all $k\geq \bar k$, where we used $a^2+b^2\geq (a+b)^2/2$. Multiplying both sides of \cref{eq:thm-kl-bound} with $8$ and taking square root, we obtain 
	%
		\begin{align}
			\nonumber\frac{\sqrt{2}}{\sqrt{\tau}n} \norm{w^{k+1}-w^k} +	2\alpha_k\norm{\Fnor(z^k)}&\leq \sqrt{\frac{16(\delta_k-\delta_{k+1})}{\tau n}\cdot 2[{\alpha_k\norm{\Fnor(z^k)} + \alpha_k[\addes^\prime(u_k)]^{-1}}]}\\
			&\leq \frac{8(\delta_k-\delta_{k+1})}{\tau n} + \alpha_k\norm{\Fnor(z^k)} + \alpha_k[\addes^\prime(u_k)]^{-1}\label{eq:thm-kl-2},
		\end{align}
	where the last inequality is due to the AM-GM inequality, $\sqrt{ab} \leq (a+b)/2$. 
	Summing the inequality \cref{eq:thm-kl-2} from $k = \bar k$ to $T$, we obtain
	\begin{equation}\label{eq:thm-kl-3}
		\frac{\sqrt{2}}{\sqrt{\tau}n} {\sum}_{k=\bar k}^{T} \norm{w^{k+1}-w^k} + {\sum}_{k=\bar k}^{T} \alpha_k\norm{\Fnor(z^k)} \leq \frac{8(\delta_{\bar k}-\delta_{T+1})}{\tau n} + {\sum}_{k=\bar k}^{T} \alpha_k [\addes^\prime(u_k)]^{-1}.
	\end{equation}
	Since $\addes$ is continuous with $\addes(0) = 0$, we have $\delta_{T+1} \to 0$ as $T \to \infty$. In addition, by \cref{eq:kl-step} and using $\vartheta \geq \xi$, it follows ${\sum}_{k=\bar k}^\infty \alpha_k [\addes^\prime(u_k)]^{-1} = \hat c {\sum}_{k=\bar k}^\infty \alpha_k u_k^{\vartheta} <\infty$. Thus, taking the limit $T \to \infty$ in \cref{eq:thm-kl-3}, it holds that
	\[
	{\sum}_{k=1}^{\infty}\alpha_k\norm{\Fnor(z^k)} <\infty \quad \text{and}\quad {\sum}_{k=1}^{\infty}\norm{w^{k+1}-w^k}<\infty.
	\]
	The second estimate implies that $\{w^k\}_{k}$ has finite length, and hence it is convergent to some $w^*$. Finally, by \cref{thm:global_convergence}, the  limit $w^*$ is a stationary point of $\psi$.
\end{proof}

\subsection{\texorpdfstring{Proof of \cref{lem:in-main-proof}}{Proof of Lemma 28}}\label{proof:lem:in-main-proof}
\begin{proof}
Similar to \cref{eq:lem-4-est-2}, we have
	\begin{equation}
		\label{eq:lem-gron-1}
			\norm{\Fnor(z^{k+i}) - \Fnor(z^k)} \leq \frac{\sL + 2\lambda^{-1}-\rho}{1-\lambda\rho} \norm{z^{k+i}-z^k}\quad \forall~i \geq 1 \; \text{and}\;  k \geq 1 .
	\end{equation}
	Applying \cref{lem:var-bound,lem:est-err} and $\max\{\|\Fnor(z^k)\|^2,{\psi(w^k) - \psilb}\} \leq \sP$, it further follows
	\begin{equation} \|e^k\|  \leq \sqrt{\CL} n^2 \alpha_k^2 (\|\Fnor(z^k)\|^2 + {2\sL[\psi(w^k)-\psilb]} )^\frac12 \leq \sqrt{\CL\sP(2\sL+1)} n^2 \alpha_k^2.		\label{eq:lem-gron-2}
	\end{equation}
    Setting $\sC_1 := \sqrt{\CL(2\sL+1)} n$, 
		%
	the update rule \cref{eq:update-z} and the estimate \cref{eq:lem-gron-2} imply
	\begin{align*}
		\norm{z^{k+i}-z^k} & \leq n \; {\sum}_{j=0}^{i-1} \; \alpha_{k+j}\norm{\Fnor(z^{k+j})} + {\sum}_{j=0}^{i-1}\norm{e^{k+j}}\\ 
		& \hspace{-4ex} \leq n \left[\varsigma \norm{\Fnor(z^{k})} +  {\sum}_{j=0}^{i-1} \;\alpha_{k+j} \norm{\Fnor(z^{k+j})-\Fnor(z^{k})} \right] + \sC_1n\sqrt{\sP}\;{\sum}_{j=0}^{i-1}\;\alpha_{k+j}^2,
		\end{align*}
%
	where the last line is due to ${\sum}_{j=0}^{i-1} \;\alpha_{k+j} \leq \varsigma$. Defining $\sC_f := (1-\lambda\rho)^{-1}(\sL+2\lambda^{-1}-\rho)n$ and combining the previous estimates, we obtain 
 \begin{align*}
     &\hspace{-4ex}\norm{\Fnor(z^{k+i}) - \Fnor(z^k)} \leq \sC_f n^{-1} \norm{z^{k+i}-z^k}\\&\leq \sC_f {\sum}_{j=0}^{i-1} \;\alpha_{k+j} \norm{\Fnor(z^{k+j})-\Fnor(z^{k})} + \sC_f \left[\varsigma \norm{\Fnor(z^{k})} + \sC_1\sqrt{\sP} \;{\sum}_{j=0}^{i-1}\;\alpha_{k+j}^2\right].
 \end{align*}
We now apply Gronwall's inequality (\cref{Thm:G}) upon setting $\sQ := \sC_f\max\{1,\sC_1\}$, 
		\begin{align*}
			p&:=\sQ \varsigma \norm{\Fnor(z^{k})} + \sQ \sqrt{\sP} \;{\sum}_{j=0}^{i-1}\;\alpha_{k+j}^2, \quad 
			q:= \sQ,\quad a_j :=\alpha_{k+j}, 
		\end{align*}
  $y_j:=\|\Fnor(z^{k+j}) - \Fnor(z^k)\|$, and $t:=i-1$. This establishes the following upper bound: 
  \[\|\Fnor(z^{k+i}) - \Fnor(z^k)\| \leq \sQ\exp(\sQ\varsigma)\left[\varsigma\|\Fnor(z^k)\|+\sqrt{\sP} \;{\sum}_{j=0}^{i-1}\;\alpha_{k+j}^2\right].\]
	Noticing $\varsigma>0$ and $\sQ\varsigma\exp(\sQ\varsigma) \leq \half$ (per assumption), we can infer 
	\begin{align*}
		\norm{\Fnor(z^k)} &\leq \|\Fnor(z^{k+i}) - \Fnor(z^k)\| + \norm{\Fnor(z^{k+i})} \\&\leq \half \norm{\Fnor(z^k)} + \norm{\Fnor(z^{k+i})} + \frac{\sqrt{\sP}}{2\varsigma} \; {\sum}_{j=0}^{i-1}\;\alpha_{k+j}^2.
	\end{align*} 
	Rearranging the terms yields $\norm{\Fnor(z^{k+i})} + \frac{\sqrt{\sP}}{2\varsigma} \; {\sum}_{j=0}^{i-1}\;\alpha_{k+j}^2 \geq \half \norm{\Fnor(z^k)}$. Taking square and using $a^2 + b^2 \geq \frac12(a+b)^2$, we have $\norm{\Fnor(z^{k+i})}^2 + \frac{\sf P}{4\varsigma^2}({\sum}_{j=0}^{i-1}\;\alpha_{k+j}^2)^2 \geq  \frac18\norm{\Fnor(z^k)}^2$.
\end{proof}

\subsection{\texorpdfstring{Proof of \cref{thm:convergence rate}: Rate of $\{w^k\}_k$}{Proof of Theorem 24}} \label{proof:step-4}

\begin{proof} In this section, we complete the proof of \cref{thm:convergence rate} and provide rates for the iterates $\{w^k\}_k$. 
Applying \cref{eq:thm-kl-3} with $T \to \infty$, we have
\begin{equation}
		\frac{\sqrt{2}}{\sqrt{\tau}n} \|w^k-w^*\| \leq \frac{\sqrt{2}}{\sqrt{\tau}n} {\sum}_{i=k}^{\infty} \norm{w^{i+1}-w^i} \leq \frac{8\delta_{k}}{\tau n} + {\sum}_{i= k}^{\infty} \alpha_i [\addes^\prime(u_i)]^{-1},
	\end{equation}
for all $k\geq \bar k$, where $\delta_k := \addes(r_k)$, $\addes(s):=\frac{\tilde c}{1-\vartheta} s^{1-\vartheta}$, and $\vartheta\in [ \max\{\half,\theta\},1)$. (As argued in the first parts of the proof, due to $w^k \to w^*$ and $z^k \to z^*$, we can directly work with the KL exponent $\theta$ and adjusted constant $\tilde c$ instead of using the uniformized versions $\hat\theta$ and $\hat c$). Hence, based on the rate for $\{r_k\}_k$  and using $R(\vartheta,\gamma) \leq R(\theta,\gamma)$ and \cref{lemma:step size} (b), we have 
	\[\delta_k = \cO(1/k^{(1-\vartheta)R(\vartheta,\gamma)}) \;\; \text{and} \;\; {\sum}_{i= k}^{\infty} \alpha_i [\addes^\prime(u_i)]^{-1}=\cO\Big({\sum}_{i= k}^{\infty} \alpha_i u_i^{\vartheta}\Big)=\cO(1/k^{(3\gamma-1)\vartheta - (1-\gamma)}).
	\] 
	Note that the adjusted KL exponent $\vartheta$ can be selected freely. Thus, we may increase it to ensure $\vartheta > \frac{1-\gamma}{3\gamma-1}$ such that ${\sum}_{i= k}^{\infty} \alpha_i [\addes^\prime(u_i)]^{-1} \to 0$ as $k\to\infty$. Hence, provided $\vartheta > \frac{1-\gamma}{3\gamma-1}$, the convergence rate for $\{w^k\}_k$ is
	\[\|w^k-w^*\| = \cO(k^{-Q(\vartheta,\gamma)}) \quad \text{where}\quad Q(\vartheta,\gamma):=\min\{(1-\vartheta) R(\vartheta,\gamma), (3\gamma-1)\vartheta - (1-\gamma)\}. \]
	We observe that $Q(\vartheta,\gamma) = (3\gamma-1)\vartheta - (1-\gamma)$ if $\vartheta \in (\frac{1-\gamma}{3\gamma-1},\frac{\gamma}{3\gamma-1}]$ and $Q(\vartheta,\gamma) = \frac{(1-\vartheta)(1-\gamma)}{2\vartheta-1}$ if $\vartheta \in (\frac{\gamma}{3\gamma-1},1)$. This means that the mapping $Q(\cdot,\gamma)$ is increasing over the interval $(\frac{1-\gamma}{3\gamma-1},\frac{\gamma}{3\gamma-1}]$ and decreasing over $(\frac{\gamma}{3\gamma-1},1)$. Therefore, we can choose $\vartheta$ in the following way to maximize $Q(\vartheta,\gamma)$: when $\theta \leq \frac{\gamma}{3\gamma-1}$, set $\vartheta = \frac{\gamma}{3\gamma-1}$; otherwise, set $\vartheta = \theta$. In summary, we obtain
	\begin{align*}
	\|w^k-w^*\| = \cO(k^{-R_w(\theta,\gamma)})\quad \text{where}\quad R_w(\theta,\gamma):=\begin{cases}
		2\gamma-1 &\text{if}\; \theta \in [0,\frac{\gamma}{3\gamma-1}]\\[2mm]
		\frac{(1-\gamma)(1-\theta)}{2\theta-1} &\text{if}\; \theta \in (\frac{\gamma}{3\gamma-1},1)
	\end{cases} \;\; \gamma \in (\textstyle\frac12,1),
	\end{align*}
and $R_w(\theta,\gamma):=1$ when $\gamma=1, \theta \in [0,\half]$ and $\alpha > \frac{16\tilde c^2}{\tau n}$. \end{proof}

\bibliography{reference}

\end{document}